\colorlet{ins}{blue} 
\colorlet{del}{red}
\newcommand{\re}{{\ensuremath{\mathrm{Re\,}}}}
\renewcommand{\Re}{\re}
\DeclareMathOperator{\trace}{\mathrm{tr}}
\newcommand{\euler}{\mathrm{e}}
\newcommand{\spann}{\mathrm{span}}
\newcommand{\fa}{\mathfrak{a}}
\newcommand{\fb}{\mathfrak{b}}
\newcommand{\cF}{\mathcal{F}}
\newcommand{\cO}{\mathcal{O}}
\newcommand{\cL}{\mathcal{L}}
\newcommand{\cS}{\mathcal{S}}
\newcommand{\ee}{\mathrm e}
\newcommand{\ii}{\mathrm i}
\newcommand{\id}{\mathrm{Id}}
\newcommand{\eps}{\varepsilon}
\newcommand{\abs}[1]{\left| #1 \right|} 
\newcommand{\scalarproduct}[2]{\langle #1, #2 \rangle}
\newcommand{\norm}[1]{\|#1\|}
\newcommand{\ska}[1]{\langle #1 \rangle}
\newcommand{\N}{\mathbb{N}}
\newcommand{\R}{\mathbb{R}}
\newcommand{\C}{\mathbb{C}}
\newcommand{\Z}{\mathbb{Z}}
\newcommand{\1}{\mathbf{1}}
\DeclareMathOperator*{\esssup}{ess\,sup}
\newcommand{\obs}{\mathrm{obs}}
\newcommand{\Ell}{\mathrm{L}}
\newcommand{\WW}{\mathrm{W}}
\newcommand{\BB}{\mathrm{B}}
\newcommand{\CC}{\mathrm{C}}
\renewcommand{\d}[1]{\ensuremath\, {\operatorname{d}\!{#1}}}
\newcommand{\one}{\mathbf{1}}
\newcommand{\trans}{T}
\newcommand{\thickset}{\Omega}
\newcommand{\from}{\colon}
\DeclareMathOperator{\supp}{supp}
\newtheorem{thm}{Theorem}[section]
\newtheorem{lem}[thm]{Lemma}
\newtheorem{prop}[thm]{Proposition}
\newtheorem{cor}[thm]{Corollary}
\newtheorem{hypo}[thm]{Hypothesis}
\theoremstyle{definition}
\newtheorem{defn}[thm]{Definition}
\newtheorem{example}[thm]{Example}
\theoremstyle{remark}
\newtheorem{rem}[thm]{Remark}
\begin{document}

\title[Observability for Non-autonomous Systems]{Observability for Non-autonomous Systems}
\author[C.~Bombach]{Clemens Bombach}
\address[Clemens Bombach]{
Technische Universit\"at Chemnitz,
Fakult\"at f\"ur Mathematik,
Reichenhainer Straße 41, 
 D-09126 Chemnitz,
 Germany
 }
\email{\href{mailto:clemens.bombach@mathematik.tu-chemnitz.de}{clemens.bombach@mathematik.tu-chemnitz.de}}
\author[F.~Gabel]{Fabian Gabel}
\address[Fabian Gabel]{
Technische Universit\"at Hamburg,
Institut f\"ur Mathematik,
Am Schwar\-zen\-berg-Campus 3,
D-21073 Hamburg,
Germany}
\email{\href{mailto:fabian.gabel@tuhh.de}{fabian.gabel@tuhh.de}}
\author[C.~Seifert]{Christian Seifert}
\address[Christian Seifert]{
Christian-Albrechts-Universit\"at zu Kiel,
Mathematisches Seminar,
Heinrich-Hecht-Platz 6,
D-24118 Kiel,
Germany,
and
Technische Universit\"at Hamburg,
Institut f\"ur Mathematik,
Am Schwarzenberg-Campus 3,
D-21073 Hamburg,
Germany}
\email{\href{mailto:christian.seifert@tuhh.de}{christian.seifert@tuhh.de}}
\author[M.~Tautenhahn]{Martin Tautenhahn}
\address[Martin Tautenhahn]{
Universit\"at Leipzig,
Mathematisches Institut,
Augustusplatz 10,
D-04109 Leipzig,
Germany
}
\email{\href{mailto:martin.tautenhahn@math.uni-leipzig.de}{martin.tautenhahn@math.uni-leipzig.de}}
\subjclass[2020]{Primary: 35Q93, 47N70; Secondary: 93B07, 93B28}
\keywords{{Banach space, evolution family, non-autonomous system, null-controllability, observability, strongly elliptic, Ornstein-Uhlenbeck operators}}

\begin{abstract}
We study non-autonomous observation systems
\begin{align*}
  \dot{x}(t) = A(t) x(t),\quad y(t) = C(t) x(t),\quad x(0) = x_0\in X,
\end{align*}
where $(A(t))$ is a strongly measurable family of closed operators on a Banach space $X$ and $(C(t))$ is a family of bounded observation operators from $X$ to a Banach space $Y$.
Based on an abstract uncertainty principle and a dissipation estimate, we prove that the observation system satisfies a final-state observability estimate in $\mathrm{L}^r(E; Y)$ for measurable subsets $E \subseteq [0,T], T > 0$.
We present applications of the above result to families $(A(t))$ of uniformly strongly elliptic differential operators as well as non-autonomous Ornstein--Uhlenbeck operators $P(t)$ on $\mathrm{L}^p(\mathbb{R}^d)$ with observation operators $C(t)u = u|_{\Omega(t)}$.
In the setting of non-autonomous strongly elliptic operators, we derive necessary and sufficient geometric conditions on the family of sets $(\Omega(t))$ such that the corresponding observation system satisfies a final-state observability estimate.
\end{abstract}
\maketitle
\section{Introduction}\label{sec:introduction}
Let $X$ and $Y$ be Banach spaces, $T>0$, $(A(t))_{t\in[0,T]}$ a family of operators $A(t) \colon \allowbreak  D(A(t)) \to X$ on $X$, and $(C(t))_{t\in[0,T]}$ a family of bounded operators $C(t) \colon X \to Y$.
In this article, we will be concerned with systems of the form
\begin{align}\label{eq:observationSystem}
\begin{split}
  \dot{x}(t) &= -A(t) x(t), \quad t \in (0,T], \quad x(0) = x_0 ,\\
    y(t) &= C(t) x(t), \quad t \in [0,T],
\end{split}
\end{align}
where the first equation in~\eqref{eq:observationSystem} describes the evolution of a state function $x$ which is driven by the operators $A(t)$ and the second equation describes the observation function $y$ of the state function $x$ through the operators $C(t)$.
In particular, we are interested in the following question: 
for a given measurable subset $E \subseteq [0,T]$ and $r \in [1,\infty]$, does there exist a constant $C_\obs \geq 0$ such that, for all initial values $x_0 \in X$, the observability estimate  
\begin{align}\label{eq:obs_intro}
\norm{x(T)}_X \leq C_{\obs}
\begin{cases}
   \Bigl(\int_{E} \norm{y(t)}_{Y}^r \d t\Bigr)^{1/r}, & r\in [1,\infty),\\
    \esssup_{t\in E} \norm{y(t)}_{Y}, & r=\infty ,
\end{cases}
\end{align}
holds?
In this case, we say that the system~\eqref{eq:observationSystem} \emph{satisfies a final-state observability estimate in $\Ell^r(E;  Y)$}.
Loosely speaking, final-state observability allows one to retrieve information about the final-state $x(T)$ by just observing the system through the measurements $y(t)$ at times~$t \in E$.

In case the families $(A(t))_{t\in[0,T]}$ and $(C(t))_{t\in[0,T]}$ are constant, final-state observability for \eqref{eq:observationSystem} has been studied thoroughly in the Hilbert space case both in abstract and in concrete situations. Autonomous self-adjoint Schr\"odinger operators $A$ in $\Ell^2$ on bounded domains of $\R^d$ and a projection $Cu  = u|_{\Omega}$ for suitable subsets $\Omega$ of the domain were considered in \cite{LebeauR-95, FursikovI-96}, as well as in \cite{Miller-04,Miller-04b,GonzalezT-07,Barbu-14} for the case of unbounded domains. 
Moreover, second order elliptic operators $A$ in $\Ell^2$ for bounded domains of $\R^d$ have been studied in \cite{PhungW-13}, where also $E$ in~\eqref{eq:obs_intro} is just a measurable subset of $[0,T]$. 
Of particular interest in understanding the case of unbounded domains is the specification of necessary and sufficient geometric conditions on $\Omega$ for observability, which were established in 
\cite{EgidiV-18,WangWZZ-19} in the case of Hilbert spaces. 
In the Banach space case, a characterization of observability in terms of geometric conditions was given in \cite{GallaunST-20,BombachGST-20}.

\subsection{Abstract Observability and Applications}
The main theorem of this article is an abstract observability theorem that combines and generalizes the results from~\cite{GallaunST-20,BombachGST-20} and from~\cite{PhungW-13,WangZ-17}.
The main idea is the extension of the \emph{Lebeau--Robbiano strategy} for deriving observability to the setting of non-autonomous problems~\eqref{eq:observationSystem} and evolution families. Excluding technical details, we summarize the philosophical pillars of this approach as follows:

\medbreak
\begin{center}
\begin{tabular}{p{0.9\textwidth}}
\emph{Given an evolution family generated by the non-autonomous operators $A(t)$ and an abstract dissipation and uncertainty estimate for the observation operators $C(t)$, there exists a constant $C_{\mathrm{obs}}$ such that the final-state observability estimate~\eqref{eq:obs_intro} holds.} 
\end{tabular}
\end{center}
\medbreak

All of the above will be made precise in Hypothesis~\ref{hypo:evoFam} and Theorem~\ref{thm:observability}.
In particular, we will show that our theorem allows us to obtain observability estimates for non-autonomous systems in Banach spaces and to include observation on a measurable set $E\subseteq[0,T]$ of time.
Its proof rests on an interpolation estimate for evolution families presented in Theorem~\ref{thm:interpolation}.

We give two applications for this abstract result: non-autonomous elliptic equations and non-autonomous Ornstein--Uhlenbeck equations. In Section~\ref{sec:elliptic}, for the first application, we prove the following result, which is also prototypical for our second application: 
\medbreak
\begin{center}
\begin{tabular}{p{0.9\textwidth}}
\emph{Consider an observation system~\eqref{eq:observationSystem} consisting of a parabolic equation in $\R^d$
\begin{equation*}
 \dot u = -A(t) u =  -\sum_{|\alpha| \leq m} a_\alpha(t) \partial^\alpha u 
\end{equation*}
with time-dependent uniformly elliptic differential operators $A(t)$ 
and observation operators $C(t)$ that are given via the restriction $u|_{\Omega(t)}$ of functions $u$ on $\R^d$ to a time-dependent family of observability sets $\Omega(t) \subseteq \R^d$. 
Then, under suitable geometric assumptions on the family $(\Omega(t))_{t\in[0,T]}$, a final-state observability estimate~\eqref{eq:obs_intro} holds.
}
\end{tabular}
\end{center}
\medbreak
More precisely, we analyze the connection between \emph{final-state observability estimates} and the \emph{geometry} of the observability sets.
First and as an extension of the results known for the autonomous setting, in Theorem~\ref{thm:obs}, we show that a \emph{uniformly thick} family of observability sets $\Omega(t)$ guarantees the existence of a final-state observability estimate.
Then, in Theorem~\ref{thm:obsImpliesMeanThick}, we derive a converse implication to Theorem~\ref{thm:obs} which builds upon a weaker notion of \emph{thickness}.
\par
As our second application, in Section~\ref{sec:OU}, we study non-autonomous Ornstein--Uhlenbeck equations. These equations are well studied in the Hilbert space setting, see, for example, the work \cite{BeauchardEP-S-20} and the references contained therein. They are parabolic equations associated with a family of second-order differential operators taking the form
\begin{equation*}
	P(t) = \frac{1}{2}\trace(A(t)A(t)^\trans\nabla_x^2) - \scalarproduct{B(t)x}{\nabla_x} - \frac{1}{2}\trace(B(t))\,,
\end{equation*}
where $A,B \in \CC^\infty(0,T;\R^{d \times d})$. Note that, in contrast to the elliptic operators considered in Section~\ref{sec:elliptic}, we have some dependence on the space variable in the first order term. In general, these operators are not elliptic. Therefore, we need to assume a certain Kalman rank condition, which is equivalent to hypoellipticity in the autonomous case, i.e., when the matrices $A,B$ are independent of $t$. The main result of this section is Theorem~\ref{thm:OU-obs} which states that for uniformly thick observability sets $\Omega(t)$ and small final times $T$, a final-state observability estimate in $\Ell^p$ holds for the evolution family associated with these Ornstein--Uhlenbeck operators if we assume a Kalman rank condition. In the case of small times, this generalizes an $\Ell^2$-observability estimate from \cite{BeauchardEP-S-20}.

\subsection{Null-controllability}

The above question on final-state observability is closely related to a variant of null-controllability properties of the predual system:
Given a final time $T> 0$, we consider the system
\begin{align}\label{eq:controlSystem}
  \dot{x}(t) &= -A(t) x(t) + B(t) u(t), \quad t \in (0,T], \quad x(0) = x_0,
\end{align}
where $(B(t))_{t\in[0,T]}$ is a family of bounded linear operators $B(t)\colon U\to X$ for some Banach space $U$. Problems like~\eqref{eq:controlSystem} for example appear in the field of controllability of partial differential equations, where $(A(t))_{t\in[0,T]}$ is a family of differential operators and $B(t)=\one_{\thickset(t)}$ is a multiplication operator on $\Ell^p(\R^d)$ which represents moving control subsets $(\thickset(t))_{t\in[0,T]}$, cf.\@ \cite{MartinRR-13,ChavesRZ-14,LeRousseauLTT-17,BeauchardEP-S-20}.

Let us introduce two properties which may hold or not hold.
\begin{enumerate}[(a)]
 \item\label{item:null-unif} For all measurable subsets $E \subseteq [0,T]$, there exists $C > 0$ such that, for all $\varepsilon > 0$ and all $x_0 \in X$, there exists $u \in \Ell^{r}(0,T; U)$ with $\supp u \subseteq E$, $\lVert u \rVert_{\Ell^{r} (0,T;U)} \leq C \lVert x_0 \rVert_X$, and $\lVert x(T) \rVert_X < \varepsilon$. 
 \item\label{item:null-non-unif} For all measurable subsets $E \subseteq [0,T]$ and all $\varepsilon > 0$, there exists $C_\varepsilon > 0$, such that, for all $x_0 \in X$, there exists $u \in \Ell^{r}(0,T; U)$ with $\supp u \subseteq E$, $\lVert u \rVert_{\Ell^{r} (0,T;U)} \leq C_\varepsilon \lVert x_0 \rVert_X$, and $\lVert x(T) \rVert_X < \varepsilon$. 
\end{enumerate}
Note that the constant $C$ in (\ref{item:null-unif}) is not allowed to depend on $\varepsilon$, while the constant $C_\varepsilon$ in (\ref{item:null-non-unif}) may well depend on $\varepsilon$ and even tend to infinity as $\varepsilon$ tends to zero. Obviously, (\ref{item:null-unif}) $\Rightarrow$ (\ref{item:null-non-unif}). Both properties are variants of \emph{approximate null-controllability}. 
As a consequence of Douglas' lemma~\cite{Douglas-66} for Hilbert spaces and corresponding results for Banach spaces \cite{DoleckiR-77,Carja-88,Vieru-05,YuLC-06}, the system~\eqref{eq:controlSystem} satisfies property (\ref{item:null-unif}) if and only if 
the dual system, which is of the form~\eqref{eq:observationSystem} with $A(t)$ replaced by the dual operators $A(t)'$ and $C(t) = B(t)'$ for all $t\in [0,T]$, satisfies a final-state observability estimate in $\Ell^{r/(r-1)}(E; U')$. 
Note that, in certain situations, 
property (\ref{item:null-unif}) is equivalent to null-controllability, that is, there exists $u \in \Ell^{r}(0,T; U)$ with $\supp u \subseteq E$ such that $x(T) = 0$, cf.\ \cite[Remark~2.1]{Carja-88}. Thus (\ref{item:null-unif}) is a natural generalization
of null-controllability from Hilbert spaces to Banach spaces.
Likewise, property (\ref{item:null-non-unif}) is equivalent to a so-called \emph{weak observability estimate} for the dual system. This is spelled out in \cite{Miller-10,TrelatWX-20,AlphonsoM-22} in the setting of Hilbert spaces, and in \cite{EgidiGST-21} for its generalization to Banach spaces.

\subsection{Outline}
Let us outline the content of the article.
After an introduction to the framework of evolution families for non-autonomous Cauchy problems in Section \ref{sec:non-aut}, we will first derive sufficient conditions for observability of abstract non-autonomous systems in Section \ref{sec:obs}.
Building on this abstract result, we will consider the concrete application of non-autonomous elliptic differential operators and relate geometric conditions of the sets of observations to final-state observability in Section \ref{sec:elliptic}. 
Furthermore, we prove an $\Ell^p$-observability estimate for evolution families associated with non-autonomous Ornstein--Uhlenbeck equations in Section \ref{sec:OU}.
We collect some properties of the non-autonomous elliptic differential operators as well as the corresponding evolution families in the appendix.

\section{Linear Non-autonomous Cauchy~Problems and Evolution~Families}
\label{sec:non-aut}

For Banach spaces $X$ and $Y$, let $\cL(X,Y)$ denote the set of \emph{bounded linear operators} from $X$ to $Y$.
Similarly, set $\cL(X) \coloneqq \cL(X,X)$.
Let $T > 0$, and let $A(t) \colon D(A(t)) \to X$, $0 \leq t \leq T$, be a family of operators in $X$.
Furthermore, let us always assume that the operators $A(t)$ are closed with common domain $D(A(t)) = D$ for all $t\in[0,T]$, where $D$ densely embeds into~$X$, and that the mapping $A \colon [0,T] \to \cL(D, X)$ is strongly measurable.

Consider the homogeneous initial value problem
\begin{align}\label{eq:evoProblem}
  \tag{NACP}
    \dot{x}(t) = -A(t) x(t), \quad t \in (0,T],  \quad x(0) = x_0 ,
\end{align}
where $x_0\in X$.
We will call~\eqref{eq:evoProblem} the \emph{non-autonomous Cauchy problem for~$A$}.
Throughout this article, we make use of the following concept of solutions for~\eqref{eq:evoProblem}; see~\cite[Chapter~4, Definition~2.8]{Pazy-83}.
\begin{defn}[Strong Solution]
\label{defn:strongSolution}
  A function $x\from[0,T]\to X$ is said to be a \emph{strong solution of \eqref{eq:evoProblem}} if $x \in \WW^{1,1}(0,T; X) \cap \Ell^1(0,T; D)$, $x(0) = x_0$, and $\dot{x}(t) = -A(t)x(t)$ for almost all $t \in (0,T)$.
\end{defn}

The following definition provides a natural generalization of operator semigroups to the context of non-autonomous Cauchy problems.

\begin{defn}[Evolution Family]\label{defn:evoFam}
    Let $T > 0$. A two-parameter family of bounded linear operators $(U(t,s))_{0 \leq s \leq t \leq T}$ on $X$ is called an \emph{evolution family} if
    \begin{enumerate}[(a)]
      \item\label{it:algebraic} $U(s,s) = \id$ and $U(t,s)U(s,r) = U(t,r)$ for $0\leq r\leq s \leq t \leq T$.
    \end{enumerate}
	If, furthermore,
    \begin{enumerate}[(a)]\addtocounter{enumi}{1}
      \item\label{it:stronglyContinuous} $(t,s) \mapsto U(t,s)$ is strongly continuous for $0 \leq s \leq t \leq T$,
    \end{enumerate}
	then we say that the evolution family~$(U(t,s))$ is \emph{strongly continuous}.

        An evolution family is called \emph{exponentially bounded} if there exist $M\geq 1$ and $\omega \in \R$ such that $\|U(t,s)\|_{\cL(X)} \leq M \ee^{\omega (t - s)}$ for all $0 \leq s \leq t \leq T$.

        An evolution family $(U(t,s))_{0 \leq s \leq t \leq T}$ is called a \emph{(strongly continuous) evolution family for $A$} if, in addition to conditions~\eqref{it:algebraic} (and \eqref{it:stronglyContinuous}), the following conditions are satisfied:
  \begin{enumerate}[(a)]
	\addtocounter{enumi}{2}
      \item\label{it:strongSolution} For all $0\leq s < T$ and $x_s \in D$, the function $x \colon [s,T] \to X$ defined by $x(t) = U(t,s) x_s$ is in $\WW^{1,1}(s,T; X)\cap \Ell^1(s,T; D)$ and satisfies $\dot{x}(t) = -A(t) U(t,s)x_s$ for almost all $t \in (s,T)$.
      \item\label{it:strongSolutionWRTs} For all $0  < t \leq T$ and $x_T \in D$, the function $x \colon [t,T] \to X$ defined by $x(s) = U(T,s) x_T$ is in $\WW^{1,1}(t,T; X)\cap \Ell^1(t,T; D)$ and satisfies $\dot{x}(s) = U(T,s) A(s) x_T$ for almost all $s \in (t,T)$.
  \end{enumerate}
\end{defn}

\begin{rem}
\label{rem:EvolutionFamilies}
  \begin{enumerate}[(a)]
    \item Since the early works by Sobolevski\u{\i}~\cite{Sobolevskii-61} and Tanabe~\cite{Tanabe-60}, non-autonomous Cauchy problems~\eqref{eq:evoProblem} and evolution families have been extensively studied by various authors.
      For details, we refer to~\cite{Tanabe-79,Pazy-83,AcquistapaceT-87,Yagi-91,Lunardi-95,Nickel-97,EngelN-00} and the references therein.
      Most of the aforementioned resources share the algebraic condition~\eqref{it:algebraic} in their definition of the evolution family $(U(t,s))$ but rely on other regularity assumptions~\eqref{it:stronglyContinuous}, \eqref{it:strongSolution}, and \eqref{it:strongSolutionWRTs} and also assume other regularity properties of the operator family $(A(t))$.
      In the literature, an evolution family is also referred to as an \emph{evolution system}, \emph{evolution operator}, \emph{evolution process}, \emph{propagator}, or \emph{fundamental solution}.
    \item\label{rem:EvolutionFamilies:item:3} The condition in Definition~\ref{defn:evoFam}~\eqref{it:strongSolution} states that $u(t) \coloneqq U(t,0)u_0$ defines a strong solution of~\eqref{eq:evoProblem} on $[0,T]$ in the sense of Definition~\ref{defn:strongSolution}.
    \item Every strongly continuous evolution family is exponentially bounded.
  \end{enumerate}
\end{rem}

The following proposition states that the existence of an evolution family for $A$ already guarantees uniqueness of strong solutions for \eqref{eq:evoProblem}.
\begin{prop}[{\cite[Proposition~3.3.4]{Gallarati-17}, \cite[Proposition~4.5]{GallaratiV-18}}]
  \label{prop:uniqueness}
    Let $T > 0$ and $(U(t,s))_{0 \leq s \leq t \leq T}$ be an evolution family for $A$.
  \begin{enumerate}[(a)]
    \item If~\eqref{eq:evoProblem} has a strong solution $u \in \WW^{1,1}(0,T; X) \cap \Ell^1(0,T; D)$, then it satisfies
      \begin{align*}
          u(t) = U(t,s) u(s) \quad\text{for}\quad 0 \leq s \leq t \leq T  . 
      \end{align*}
      In particular, strong solutions are unique.
    \item If $(\widetilde{U}(t,s))_{0\leq s\leq t\leq T}$ is a further evolution family for $A$, then $\widetilde{U} = U$.
  \end{enumerate}
\end{prop}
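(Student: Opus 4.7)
The plan is to reduce both~(a) and~(b) to the single identity $u(t) = U(t,s) u(s)$, $0 \leq s \leq t \leq T$, valid for every strong solution~$u$ of~\eqref{eq:evoProblem}. Uniqueness in~(a) is then immediate by linearity, since the difference of two strong solutions with the same initial datum is a strong solution whose initial value is~$0$ and must therefore vanish identically. For~(b), given a second evolution family~$\tilde U$ for~$A$ and any $x_s \in D$, condition~\eqref{it:strongSolution} applied to~$\tilde U$ asserts that $r \mapsto \tilde U(r,s) x_s$ is a strong solution of the non-autonomous Cauchy problem on $[s,T]$ with initial value~$x_s$; applying~(a) to this solution together with the evolution family~$U$ yields $\tilde U(t,s) x_s = U(t,s) x_s$, and density of~$D$ in~$X$ combined with boundedness of both operators extends the identity to all of~$X$.

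To prove the core identity, fix $0 \leq s \leq t \leq T$ and introduce the auxiliary function $\phi \colon [s,t] \to X$, $\phi(r) \coloneqq U(t,r) u(r)$. Evaluating at the endpoints gives $\phi(t) = u(t)$ and $\phi(s) = U(t,s) u(s)$, so it suffices to show that $\phi$ is constant on $[s,t]$. Heuristically, combining $\dot u(r) = -A(r) u(r)$ (from the definition of strong solution) with $\tfrac{d}{dr} U(t,r) x = U(t,r) A(r) x$ for $x \in D$ (condition~\eqref{it:strongSolutionWRTs} of Definition~\ref{defn:evoFam}, with~$T$ replaced by~$t$) through the product rule yields $\dot\phi(r) = U(t,r) A(r) u(r) - U(t,r) A(r) u(r) = 0$. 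To make this rigorous, I would telescope, for $s \leq r_0 < r_1 \leq t$ with $u(r_0) \in D$ (which holds for a.e.\ $r_0$ since $u \in \Ell^1(s,t;D)$),
\[
\phi(r_1) - \phi(r_0) = U(t,r_1)\bigl[u(r_1) - u(r_0)\bigr] + \bigl[U(t,r_1) - U(t,r_0)\bigr] u(r_0),
\]
rewrite the first bracket as $-\int_{r_0}^{r_1} A(\sigma) u(\sigma)\,d\sigma$ via the fundamental theorem for $\WW^{1,1}$-functions, and rewrite the second as $\int_{r_0}^{r_1} U(t,\sigma) A(\sigma) u(r_0)\,d\sigma$ by integrating the identity from~\eqref{it:strongSolutionWRTs} applied to $u(r_0) \in D$. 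Passing to the limit $r_1 \to r_0^+$ at Lebesgue points of the two $\Ell^1$-integrands $\sigma \mapsto A(\sigma) u(\sigma)$ and $\sigma \mapsto U(t,\sigma) A(\sigma) u(r_0)$, and using the strong continuity $U(t,r_1) \to U(t,r_0)$ from~\eqref{it:stronglyContinuous}, produces $\dot\phi(r_0) = 0$ at almost every such~$r_0$.

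The principal obstacle will be lifting this almost-everywhere derivative statement to the conclusion that $\phi$ is in fact constant, since the clean telescoping identity above is only available at points~$r_0$ with $u(r_0) \in D$ rather than for all $r_0 \in [s,t]$. The plan to bridge this gap is to establish absolute continuity of $\phi$ on $[s,t]$ by controlling $\|\phi(r_1) - \phi(r_0)\|_X$ through the same two integrals, using the exponential boundedness of the evolution family from Remark~\ref{rem:EvolutionFamilies} and the $\Ell^1$-integrability of $\sigma \mapsto A(\sigma) u(\sigma)$ inherited from $\dot u \in \Ell^1(0,T;X)$; since $\phi$ is continuous on $[s,t]$ anyway by strong continuity of $U(t,\cdot)$ and continuity of~$u$ in~$X$, combining this absolute continuity with $\dot\phi = 0$ almost everywhere yields that $\phi$ is constant and completes the argument.
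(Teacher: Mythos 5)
The paper itself gives no proof of this proposition --- it is imported verbatim from \cite[Proposition~3.3.4]{Gallarati-17} and \cite[Proposition~4.5]{GallaratiV-18} --- so your attempt has to be judged on its own. Your overall architecture is the standard one and is sound: reduce (a) and (b) to the identity $u(t)=U(t,s)u(s)$, prove that identity by showing $\phi(r)\coloneqq U(t,r)u(r)$ is constant, and obtain (b) by applying (a) to the strong solution $r\mapsto\widetilde U(r,s)x_s$ furnished by Definition~\ref{defn:evoFam}\eqref{it:strongSolution}. Two of the tools you invoke are, however, not literally available from the paper's hypotheses: Definition~\ref{defn:evoFam}\eqref{it:strongSolutionWRTs} only concerns $s\mapsto U(T,s)x_T$ for the fixed final time $T$, whereas you need (and explicitly assume) its analogue for $s\mapsto U(t,s)x$ with arbitrary $t\le T$; and strong continuity plus exponential boundedness are properties of \emph{strongly continuous} evolution families, while the proposition assumes only an evolution family for $A$.

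The genuine gaps are in the two analytic steps that would turn the heuristic product rule into a proof. First, to get $\dot\phi(r_0)=0$ you need $r_0$ to be a Lebesgue point of $\sigma\mapsto U(t,\sigma)A(\sigma)u(r_0)$, but the exceptional null set of $\sigma\mapsto U(t,\sigma)A(\sigma)y$ depends on $y\in D$, and you need the statement on the diagonal $y=u(r_0)$ for uncountably many $r_0$; ``almost every $\sigma$ works for each fixed $y$'' does not yield ``almost every $r_0$ works for $y=u(r_0)$''. (This can be repaired via $D$-valued Lebesgue points of $u$, but only if one additionally knows $\esssup_t\lVert A(t)\rVert_{\cL(D,X)}<\infty$, which the paper does not assume.) Second, and more seriously, your route to absolute continuity of $\phi$ fails: your bound for the second increment is $\lVert[U(t,r_1)-U(t,r_0)]u(r_0)\rVert_X\le M\ee^{\omega T}\int_{r_0}^{r_1}\lVert A(\sigma)u(r_0)\rVert_X\d \sigma$, whose integrand involves $u(r_0)$ measured in $D$; since $u$ lies only in $\Ell^1(0,T;D)$, not $\Ell^\infty(0,T;D)$, there is no $\Ell^1$ majorant independent of $r_0$, and summing over disjoint intervals gives no smallness. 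Without absolute continuity, ``continuous with vanishing derivative almost everywhere'' does not imply constancy (Cantor function). The standard repair avoids pointwise differentiation altogether: telescope $\phi(t)-\phi(s)$ over partitions $s=t_0<\dots<t_n=t$ with $u(t_k)\in D$, rewrite each increment as the difference of your two integrals, and pass to the limit in the resulting Riemann-type sums, choosing the partition points (e.g.\ by a Fubini argument over translated uniform grids) so that $\int_s^t\lVert u(\sigma_n(r))-u(r)\rVert_D\d r\to0$. That is essentially the content of the cited proof, and it is the piece your sketch is missing.
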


\section{Observability for Evolution Families on Measurable Sets in Time}
\label{sec:obs}

In this section, we prove an abstract observability estimate for evolution families on Banach spaces formulated in Theorem~\ref{thm:observability}. Before we state that theorem, we introduce our main hypothesis.
\begin{hypo}
  \label{hypo:evoFam}
  Let $X$ and $Y$ be Banach spaces, $T>0$, and $(U(t,s))_{0\leq s\leq t \leq T}$ an exponentially bounded evolution family on~$X$.
  Let $C\colon [0,T]\to \cL(X,Y)$ be bounded,
  and assume that $[0,T] \ni t\mapsto \norm{C(t) U(t,0) x_0}_Y$
  is measurable for all $x_0\in X$.
Let $(P_\lambda)_{\lambda>0}$ in $\cL(X)$ be bounded.
Assume that there exist $d_0,d_1,\gamma_1 > 0$ such that 
\begin{align} 
  \forall \lambda > 0 \ \forall t\in[0,T] \ \forall x\in X &\colon \quad \lVert P_\lambda x \rVert_{ X } 
  \le 
   d_0 \euler^{d_1 \lambda^{\gamma_1}} \norm{ C(t)  P_\lambda x }_{Y}
        \label{eq:ass:uncertainty}
\end{align}
and $d_2\geq 1$ and $d_3,\gamma_2,\gamma_3 >0$ with $\gamma_1<\gamma_2$ such that
\begin{align}
\label{eq:ass:dissipation}
        \forall \lambda > 0 \ \forall 0\leq s\leq t\leq T \ \forall x_s\in X  &\colon \quad \lVert (\id-P_\lambda) U(t,s) x_s \rVert_{X} \le d_2 \euler^{-d_3 \lambda^{\gamma_2} (t-s)^{\gamma_3}} \lVert x_s \rVert_{X} .
\end{align}
\end{hypo}
\begin{rem}
    The estimate in \eqref{eq:ass:uncertainty} is an abstract uncertainty principle, while \eqref{eq:ass:dissipation} is a dissipation estimate.
\end{rem}

\begin{thm} \label{thm:observability} 
  Assume Hypothesis~\ref{hypo:evoFam} and let $E\subseteq [0,T]$ be measurable with positive Lebesgue measure.
  Then there exists $C_{\mathrm{obs}} \geq 0$ such that, for all $x_0\in X$ and $r\in[1,\infty]$, we have
	\begin{align*}
		\lVert U(T,0) x_0 \rVert_{X} \leq C_{\mathrm{obs}} \begin{cases} \Bigl(\int_E \norm{C(t)U(t,0) x_0}_Y^r\d t\Bigr)^{1/r}, & r\in [1,\infty),\\
		\esssup_{t\in E} \norm{C(t)U(t,0)x_0}_{Y}, & r=\infty .                
        \end{cases}
	\end{align*}
\end{thm}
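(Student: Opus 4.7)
The plan is to implement a Lebeau--Robbiano-type argument adapted to the measurable-in-time setting, following the approach pioneered by Phung--Wang \cite{PhungW-13} and extended to Banach spaces in \cite{GallaunST-20,BombachGST-20}. The central tool is the interpolation estimate of Theorem~\ref{thm:interpolation}, which under Hypothesis~\ref{hypo:evoFam} should yield, for $0\le s<t\le T$, any measurable $I\subseteq[s,t]$ of positive measure and any $\lambda>0$, an inequality of the form
\begin{equation*}
\|U(t,s)x_s\|_X \le D_0\euler^{D_1\lambda^{\gamma_1}}\int_I \|C(r)U(r,s)x_s\|_Y\,\dd r + D_2\euler^{-D_3\lambda^{\gamma_2}(t-s)^{\gamma_3}}\|x_s\|_X,
\end{equation*}
where the two competing exponentials can be played against one another precisely because $\gamma_1<\gamma_2$.

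I would first reduce to the case $r=1$: H\"older's inequality gives $\|f\|_{\Ell^1(E;Y)}\le |E|^{1-1/r}\|f\|_{\Ell^r(E;Y)}$ for every $r\in[1,\infty]$, so an $\Ell^1$-observability constant immediately produces one for every $r$. Moreover, exponential boundedness of $(U(t,s))$ means it suffices to bound $\|U(\ell,0)x_0\|_X$ for some fixed $\ell\in(0,T]$ which is a right-sided Lebesgue density point of $E$; the factor $\|U(T,\ell)\|_{\cL(X)}$ is then absorbed into the final constant.

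The heart of the argument is to fix a geometric ratio $\rho\in(0,1)$ and construct a strictly decreasing sequence $\tau_0=\ell>\tau_1>\cdots\to\ell^*\ge 0$ with $\tau_{k}-\tau_{k+1}=\rho(\tau_{k-1}-\tau_{k})$, so that the density-point property guarantees $|E\cap[\tau_{k+1},\tau_k]|\ge c(\tau_k-\tau_{k+1})$ for all $k\ge k_0$. Applying the interpolation estimate on $[\tau_{k+1},\tau_k]$ with $I=E\cap[\tau_{k+1},\tau_k]$, scale $\lambda_k=\eta^k$ for a suitable $\eta>1$, and the evolution identity $U(\tau_k,0)=U(\tau_k,\tau_{k+1})U(\tau_{k+1},0)$ yields a recursion
\begin{equation*}
a_k \le A_k J_k + B_k\,a_{k+1},\qquad a_k:=\|U(\tau_k,0)x_0\|_X,\ J_k:=\int_{E\cap[\tau_{k+1},\tau_k]}\|C(t)U(t,0)x_0\|_Y\,\dd t,
\end{equation*}
with $A_k$ of order $\euler^{D_1\eta^{k\gamma_1}}$ and $B_k$ of order $\euler^{-c'(\eta^{\gamma_2}\rho^{\gamma_3})^k}$. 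Choosing $\eta,\rho$ so that $\eta^{\gamma_2}\rho^{\gamma_3}>\max(1,\eta^{\gamma_1})$ -- which is possible precisely because $\gamma_1<\gamma_2$ -- makes $\prod_{j<k}B_j\cdot A_k$ summable and forces $\bigl(\prod_{j\le N}B_j\bigr)a_{N+1}\to 0$ since $a_k\le M\euler^{\omega T}\|x_0\|_X$ uniformly. Telescoping then yields $a_0\le C'_{\obs}\int_E\|C(t)U(t,0)x_0\|_Y\,\dd t$, which is the desired $r=1$ estimate.

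The main obstacle I anticipate is the joint calibration of $\eta$ and $\rho$ against $\gamma_1,\gamma_2,\gamma_3$ so that simultaneously (i) the Lebesgue-density lower bound $|E\cap[\tau_{k+1},\tau_k]|\ge c(\tau_k-\tau_{k+1})$ holds uniformly for all $k\ge k_0$, (ii) the weighted product $\prod_{j<k}B_j\cdot A_k$ is summable, and (iii) the tail product $\prod_j B_j$ annihilates the trivial a~priori bound on $a_{N+1}$. Once this calibration is set up, and provided Theorem~\ref{thm:interpolation} supplies the interpolation estimate with constants tracking the ratio $|I|/(t-s)$, the remaining steps reduce to routine telescoping and summation.
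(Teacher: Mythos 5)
Your proposal is correct in outline but follows a genuinely different route from the paper. The paper first establishes the \emph{multiplicative} interpolation inequality of Theorem~\ref{thm:interpolation}, $F(\tau)\lesssim \exp(\widetilde C_2(t-s)^{-\gamma_1\gamma_3/(\gamma_2-\gamma_1)})\,G(t)^{1-\theta}F(s)^{\theta}$, obtained by optimizing over $\lambda$ and applying Lemma~\ref{lem:interpolation}; it then follows Phung--Wang: Young's inequality with an adaptively chosen $\eps=\eps(\delta_m)$ converts each step into a difference of weighted terms, and the ratio $q$ of the time sequence is chosen so that these weights telescope exactly. You instead keep the \emph{additive} two-term inequality with the free parameter $\lambda$ (uncertainty term plus dissipation term), choose a geometric frequency scale $\lambda_k=\eta^k$ alongside the geometric time scale $\rho^k$, and sum the resulting recursion $a_k\le A_kJ_k+B_ka_{k+1}$ under the calibration $\eta^{\gamma_2}\rho^{\gamma_3}>\eta^{\gamma_1}$ --- this is the ``direct Lebeau--Robbiano strategy'' in the sense of Miller. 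Both work, and your summability condition is exactly achievable because $\gamma_1<\gamma_2$. What the paper's route buys is that Theorem~\ref{thm:interpolation} is of independent interest and the observability constant can be tracked explicitly (Remark~\ref{rem:constants}); what your route buys is that it bypasses Lemma~\ref{lem:interpolation}, the Young's-inequality step, and the delicate choice of $q$ tied to the exponent $\gamma_1\gamma_3/(\gamma_2-\gamma_1)$.

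Three small points to repair. First, the additive inequality you display is \emph{not} what Theorem~\ref{thm:interpolation} states; it is the pre-optimization estimate \eqref{eq:F2}, which follows directly from \eqref{eq:ass:uncertainty} and \eqref{eq:ass:dissipation}, so you should derive it from Hypothesis~\ref{hypo:evoFam} rather than cite Theorem~\ref{thm:interpolation}. Second, \eqref{eq:F2} is pointwise at the observation time $t'$, with dissipation factor $\euler^{-d_3\lambda^{\gamma_2}(t'-s)^{\gamma_3}}$; to integrate over $I=E\cap[\tau_{k+1},\tau_k]$ and still keep a factor $\euler^{-c(\tau_k-\tau_{k+1})^{\gamma_3}\lambda^{\gamma_2}}$ you must restrict to the part of $I$ bounded away from $\tau_{k+1}$ (the role of $\xi_m$ in the paper) and transfer from $F(t')$ to $a_k=F(\tau_k)$ via exponential boundedness --- both steps cost only constants but must be stated. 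Third, since your density bound holds only for $k\ge k_0$, pass from $a_0$ to $a_{k_0}$ by the trivial bound $a_0\le M\euler^{\omega T}a_{k_0}$ before starting the recursion.
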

If $E = [0,T]$, we give an explicit estimate on $C_{\mathrm{obs}}$ in Remark~\ref{rem:constants}.
For the proof of Theorem~\ref{thm:observability}, it is convenient to introduce the following shorthand notation: for all $x_0 \in X$, $0  \leq t \leq T$, and $\lambda > 0$, we define
\begin{align*}
  F(t) \coloneqq \|U(t,0)x_0 \|_X\,, \quad
  F_{\lambda}(t) &\coloneqq \|P_\lambda U(t,0)x_0 \|_X\,, \quad \\
  F_{\lambda}^\perp(t) &\coloneqq \|(\id - P_\lambda) U(t,0)x_0 \|_X
\end{align*}
and analogously for the observations
\begin{align*}
  G(t) \coloneqq \|C(t) U(t,0)x_0 \|_Y\,, \quad
  G_{\lambda}(t) &\coloneqq \|C(t) P_\lambda U(t,0)x_0 \|_Y\,, \quad \\
  G_{\lambda}^\perp(t) &\coloneqq \|C(t) (\id - P_\lambda) U(t,0)x_0 \|_Y\,.
\end{align*}

We start by recalling a standard interpolation argument formulated in the following lemma, cf.~\cite[p.~110]{Robbiano-95} or \cite[Lemma~5.2]{RousseauL-12} for similar statements.
\begin{lem} \label{lem:interpolation}
 Let $F_1,F_2,G , D , C \geq 0$, $\theta \in (0,1)$, and assume that, for all $\eps \in (0,1]$, we have
 \begin{align} \label{eq:assumption-lemma}
    F_2 \leq D F_1 
    \quad\text{and}\quad
    F_2 \leq C \left( \eps^{-\frac{\theta}{1 - \theta}} G + \eps F_1 \right) .
 \end{align}
 Then we have
 \[
  F_2 \leq \max \left\{  \frac{C}{\theta^{\theta}(1 - \theta)^{1-\theta}},  
                    D \Bigl( \frac{\theta}{1 - \theta}\Bigr)^{1- \theta} 
         \right\}  F_1^\theta G^{1-\theta} .
 \]
\end{lem}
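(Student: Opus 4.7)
The plan is to treat the second hypothesis as a one-parameter family of upper bounds for $F_2$ and to optimise over $\eps$. Writing $h(\eps) \coloneqq \eps^{-\theta/(1-\theta)} G + \eps F_1$, elementary calculus shows that $h$ is minimised on $(0,\infty)$ at
\[ \eps_\ast = \Bigl(\tfrac{\theta}{1-\theta}\Bigr)^{1-\theta} \Bigl(\tfrac{G}{F_1}\Bigr)^{1-\theta}. \]
The natural strategy is then to substitute $\eps_\ast$ back into the second inequality whenever $\eps_\ast$ lies in the admissible range $(0,1]$, and to fall back on the first inequality $F_2 \leq DF_1$ in the opposite case. Before splitting cases I would quickly dispose of the degeneracies $F_1=0$ (use $F_2 \leq DF_1 = 0$ directly) and $G=0$ (let $\eps\to 0^+$ in the second inequality to force $F_2=0$), so that $\eps_\ast$ is meaningfully defined in what follows.

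In the case $\eps_\ast \in (0,1]$, substituting $\eps_\ast$ into the second hypothesis yields
\[ F_2 \leq C\Bigl[\Bigl(\tfrac{1-\theta}{\theta}\Bigr)^{\theta} + \Bigl(\tfrac{\theta}{1-\theta}\Bigr)^{1-\theta}\Bigr] F_1^\theta G^{1-\theta}. \]
This is the step that needs the most care: I need to verify that the bracketed constant collapses to $1/(\theta^\theta(1-\theta)^{1-\theta})$. Putting both summands over the common denominator $\theta^\theta(1-\theta)^{1-\theta}$, the numerator becomes $(1-\theta)^{\theta}(1-\theta)^{1-\theta} + \theta^{1-\theta}\theta^{\theta} = (1-\theta) + \theta = 1$, which gives the first candidate in the asserted maximum.

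In the opposite case $\eps_\ast > 1$, the very definition of $\eps_\ast$ rearranges to $F_1 \leq (\theta/(1-\theta))\,G$, whence $F_1^{1-\theta} \leq (\theta/(1-\theta))^{1-\theta} G^{1-\theta}$. Invoking the first hypothesis and splitting $F_1 = F_1^\theta F_1^{1-\theta}$ then gives
\[ F_2 \leq DF_1 \leq D\Bigl(\tfrac{\theta}{1-\theta}\Bigr)^{1-\theta} F_1^\theta G^{1-\theta}, \]
which is precisely the second candidate. Taking the worse of the two cases produces the claimed bound. I do not anticipate real obstacles here; the one-variable optimisation in $\eps$ and the decomposition into the two cases are standard, and the only actual computation that requires bookkeeping is the simplification of the bracketed constant just described.
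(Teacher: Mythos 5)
Your proposal is correct and follows essentially the same route as the paper's proof: dispose of the degenerate cases $F_1=0$ and $G=0$, minimise the right-hand side of the second hypothesis over $\eps$ at $\eps_\ast=\bigl(\tfrac{\theta G}{(1-\theta)F_1}\bigr)^{1-\theta}$, substitute when $\eps_\ast\le 1$, and fall back on $F_2\le DF_1$ together with $F_1<\tfrac{\theta}{1-\theta}G$ when $\eps_\ast>1$. The simplification of the bracketed constant to $1/(\theta^\theta(1-\theta)^{1-\theta})$ checks out, so nothing is missing.
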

\begin{proof} 
 If $F_1=0$ or $G=0$, the statement is obvious. (Note that, in case $G=0$, the second inequality in~\eqref{eq:assumption-lemma} yields $F_2=0$.) Therefore, let $F_1,G>0$.
  Then the right-hand side of the second inequality in~\eqref{eq:assumption-lemma} is minimal for 
  \begin{align*}
    \varepsilon_0 \coloneqq \left(\frac{\theta G }{ (1-\theta)F_1}\right)^{1-\theta} \; >0\,.
  \end{align*}
  \par
  If $\varepsilon_0\leq 1$, we have by assumption
      \begin{align*}
          F_2 & \leq 
          C \left( \eps_0^{-\frac{\theta}{1 - \theta}} G + \eps_0 F_1 \right)  =
			\frac{C}{\theta^{\theta}(1 - \theta)^{1-\theta}} F_1^{\theta} G^{1-\theta} .
		\end{align*}
  \par
    If $\varepsilon_0>1$, by the first inequality in~\eqref{eq:assumption-lemma} and the definition of $\varepsilon_0$, we observe
		\begin{align*}
			F_2
			& \leq D F_1^{\theta} F_1^{1-\theta} 
              = D F_1^{\theta} \left(\frac{\theta G}{(1-\theta)\varepsilon_0^{1/(1-\theta)}}\right)^{1-\theta} 
              < D\left(\frac{\theta}{1-\theta}\right)^{1-\theta} F_1^\theta G^{1-\theta}. \qedhere
		\end{align*}
\end{proof}
The following theorem is inspired by \cite[Theorem~1.2]{WangZ-17}, where a similar interpolation estimate is proven in the case of Hilbert spaces and one-parameter semigroups.
\begin{thm} \label{thm:interpolation} 
Assume Hypothesis~\ref{hypo:evoFam}, and let $\theta \in (0,1)$.
Then there exist $\widetilde C_1, \widetilde C_2, \widetilde{C}_3\geq 0$ such that, for all $x_0 \in X$ and $0\le  s < t \le T$, we have that
\begin{align*}
  \lVert U(t,0) x_0 \rVert_{X} 
  \leq 
  \widetilde C_1 \exp \left(\frac{\widetilde C_2}{(t-s)^{\frac{\gamma_1 \gamma_3}{\gamma_2 - \gamma_1}}} 
  +   \widetilde{C}_3 (t-s)\right) \lVert C(t) U(t,0)x_0 \rVert_{Y}^{1-\theta} \lVert U(s,0)x_0\rVert_{X}^{\theta}.
\end{align*}
\end{thm}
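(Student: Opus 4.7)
The plan is to implement the Lebeau--Robbiano strategy: combine the uncertainty and dissipation estimates to produce the two hypotheses of Lemma~\ref{lem:interpolation}, then apply it. The first inequality ($F_2\le DF_1$) comes for free from exponential boundedness of the evolution family: $\|U(t,0)x_0\|_X\le M e^{\omega(t-s)}\|U(s,0)x_0\|_X$. The work lies in producing the second inequality, and for that I split $U(t,0)x_0=P_\lambda U(t,0)x_0+(\id-P_\lambda)U(t,0)x_0$. For the low-frequency piece, I apply the uncertainty principle~\eqref{eq:ass:uncertainty} at time $t$ to $x=U(t,0)x_0$, then insert $C(t)P_\lambda=C(t)-C(t)(\id-P_\lambda)$ and use $M_C:=\sup_{t\in[0,T]}\|C(t)\|_{\cL(X,Y)}<\infty$. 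For the high-frequency piece, I factor $U(t,0)=U(t,s)U(s,0)$ and apply the dissipation~\eqref{eq:ass:dissipation} with initial state $U(s,0)x_0$. Writing $\tau:=t-s$ and bundling all constants into $C':=(d_0M_C+1)d_2$ (using $e^{d_1\lambda^{\gamma_1}}\ge 1$), this yields, for every $\lambda>0$,
\[
\|U(t,0)x_0\|_X \le d_0 e^{d_1\lambda^{\gamma_1}}\|C(t)U(t,0)x_0\|_Y + C'\, e^{d_1\lambda^{\gamma_1}-d_3\lambda^{\gamma_2}\tau^{\gamma_3}}\|U(s,0)x_0\|_X.
\]

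Next, I reparametrize to match the shape $\eps^{-\theta/(1-\theta)}G+\eps F_1$ in Lemma~\ref{lem:interpolation}. Setting $k:=d_1(1-\theta)/\theta$ and $\eps:=e^{-k\lambda^{\gamma_1}}$ identifies $d_0 e^{d_1\lambda^{\gamma_1}}=d_0\eps^{-\theta/(1-\theta)}$, and the map $\lambda\mapsto\eps$ is a bijection from $(0,\infty)$ onto $(0,1)$, which is the range of $\eps$ actually needed in the proof of Lemma~\ref{lem:interpolation}. For the cross term, I decompose
\[
d_1\lambda^{\gamma_1}-d_3\lambda^{\gamma_2}\tau^{\gamma_3} = -k\lambda^{\gamma_1} + h(\lambda), \qquad h(\lambda):=(d_1+k)\lambda^{\gamma_1}-d_3\lambda^{\gamma_2}\tau^{\gamma_3}.
\]
Since $\gamma_1<\gamma_2$, a one-variable optimization gives $\sup_{\lambda>0}h(\lambda)\le K_1\,\tau^{-\gamma_1\gamma_3/(\gamma_2-\gamma_1)}$ for an explicit constant $K_1$ depending only on $d_1,d_3,\gamma_1,\gamma_2,\gamma_3,\theta$. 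Therefore $e^{d_1\lambda^{\gamma_1}-d_3\lambda^{\gamma_2}\tau^{\gamma_3}}\le \eps\, e^{K_1/\tau^{\gamma_1\gamma_3/(\gamma_2-\gamma_1)}}$, and we obtain, for all $\eps\in(0,1)$,
\[
\|U(t,0)x_0\|_X \le \widehat C(\tau)\bigl(\eps^{-\theta/(1-\theta)}\|C(t)U(t,0)x_0\|_Y + \eps\,\|U(s,0)x_0\|_X\bigr),
\]
where $\widehat C(\tau):=\max\{d_0,C'\}\,e^{K_1/\tau^{\gamma_1\gamma_3/(\gamma_2-\gamma_1)}}$.

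Finally, Lemma~\ref{lem:interpolation} applied with $F_2=\|U(t,0)x_0\|_X$, $F_1=\|U(s,0)x_0\|_X$, $G=\|C(t)U(t,0)x_0\|_Y$, $D=Me^{\omega\tau}$, and $C=\widehat C(\tau)$ yields the desired bound. Absorbing the $\theta$-dependent factors $\theta^{-\theta}(1-\theta)^{-(1-\theta)}$ and $(\theta/(1-\theta))^{1-\theta}$ and the constants $M,\max(d_0,C')$ into $\widetilde C_1$, one may take $\widetilde C_2:=K_1$ and $\widetilde C_3:=\max(\omega,0)$, producing exactly the stated estimate. The one genuinely technical step is the optimization of $h$, which both pins down the precise exponent $\gamma_1\gamma_3/(\gamma_2-\gamma_1)$ in $\tau$ and is the place where the hypothesis $\gamma_1<\gamma_2$ is essential; everything else is bookkeeping.
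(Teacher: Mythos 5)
Your proposal is correct and follows essentially the same route as the paper's proof: split into $P_\lambda$ and $\id-P_\lambda$ parts, combine the uncertainty and dissipation estimates, optimize the exponent in $\lambda$ (your $h$ equals the paper's $f/\theta$, so the optimization is identical up to a constant), reparametrize $\lambda\mapsto\eps$, and feed the result together with exponential boundedness into Lemma~\ref{lem:interpolation}. The only (cosmetic) difference is that you substitute $\eps=\euler^{-k\lambda^{\gamma_1}}$ whereas the paper uses $\eps=\euler^{-(1-\theta)d_3\lambda^{\gamma_2}(t-s)^{\gamma_3}}$; both parametrizations of $(0,1)$ work, and the endpoint $\eps=1$ is recovered by continuity in either case.
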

Explicit estimates on $\widetilde C_1$, $\widetilde C_2$, and $\widetilde C_3$ can be inferred from the proof and are summarized in Remark~\ref{rem:constants}.
\begin{proof}[Proof of Theorem~\ref{thm:interpolation}]
	Let $\theta \in (0,1)$, $x_0 \in X$, and $0 < t \le T$. Furthermore, let $\lambda > 0$. Using the uncertainty principle~\eqref{eq:ass:uncertainty}, we obtain
        \begin{align*}
          F(t) \le F_{\lambda}(t) + F_{\lambda}^\perp(t) \le d_0\euler^{d_1 \lambda^{\gamma_1}} G_{\lambda}(t) + F_{\lambda}^\perp(t) .
        \end{align*}
	Together with the estimate
        \begin{align*}
          G_{\lambda}(t) \le G(t) + G_{\lambda}^\perp(t) \le G(t) + \lVert C(t) \rVert_{\cL (X,Y)}F_{\lambda}^\perp(t)  ,
        \end{align*}
        we get
	\begin{equation}
          F(t) \le d_0\euler^{d_1 \lambda^{\gamma_1}} G(t) + \big(1 + d_0\lVert C(t) \rVert_{\cL (X,Y)}\big)\,\euler^{d_1 \lambda^{\gamma_1}}F_{\lambda}^\perp(t) .
		\label{eq:F1}
	\end{equation}
        Let $0\leq s< t$. By the algebraic property~\eqref{it:algebraic} of evolution families in Definition~\ref{defn:evoFam} and the dissipation estimate~\eqref{eq:ass:dissipation}, we obtain
        \begin{align*}
          F_{\lambda}^\perp(t) 
          = \bigl\lVert (\id - P_\lambda) U(t,s)U(s,0) x_0 \bigr\rVert_X 
          \leq d_2 \euler^{-d_3 \lambda^{\gamma_2} (t-s)^{\gamma_3}} F(s) .
        \end{align*}
        With~\eqref{eq:F1}, we have
	\begin{align}
	\begin{split}
		F(t) &\le d_0\euler^{d_1 \lambda^{\gamma_1}} G(t) + \big(1 + d_0\lVert C(t) \rVert_{\cL (X,Y)}\big)\, d_2\euler^{d_1 \lambda^{\gamma_1}-d_3 \lambda^{\gamma_2} (t-s)^{\gamma_3}} F(s)\\
		& \leq \widetilde{c}_1 \Bigl(\euler^{d_1 \lambda^{\gamma_1}} G(t) + \euler^{d_1 \lambda^{\gamma_1}-d_3 \lambda^{\gamma_2} (t-s)^{\gamma_3}} F(s)\Bigr),
    \end{split}
    \label{eq:F2}
	\end{align}
	where $\widetilde{c}_1\coloneqq \max\{d_0, (1+d_0\norm{C(\cdot)}_{\infty})d_2\} > 1$.
	
	Let now $f(\lambda) \coloneqq d_1 \lambda^{\gamma_1}-\theta d_3 \lambda^{\gamma_2} (t-s)^{\gamma_3}$.
	Then $f$ attains its maximum at the point $$\lambda^* \coloneqq \left(\frac{d_1\gamma_1}{\theta d_3\gamma_2}\right)^{\frac{1}{\gamma_2-\gamma_1}} \left(\frac{1}{t-s}\right)^{\frac{\gamma_3}{\gamma_2-\gamma_1}}.$$
	Thus,
        \begin{align*}
          f(\lambda)
          \leq f(\lambda^*) 
          = \left(\frac{d_1\gamma_1}{\theta d_3\gamma_2}\right)^{\frac{\gamma_1}{\gamma_2-\gamma_1}}d_1 \left (1-\frac{\gamma_1}{\gamma_2}\right) \left(\frac{1}{t-s}\right)^{\frac{\gamma_1\gamma_3}{\gamma_2-\gamma_1}} .
        \end{align*}
	This estimate and \eqref{eq:F2} imply that
	\begin{align*}
		F(t) 
                &\le \widetilde{c}_1 \exp\left(\widetilde{c}_2 \left(\frac{1}{t-s}\right)^{\frac{\gamma_1\gamma_3}{\gamma_2-\gamma_1}}\right) 
		\Bigl(\euler^{\theta d_3 \lambda^{\gamma_2}(t-s)^{\gamma_3}} G(t) + \euler^{-(1-\theta)d_3 \lambda^{\gamma_2}(t-s)^{\gamma_3}} F(s)\Bigr),
	\end{align*}
	where
	\[
	 \widetilde{c}_2 \coloneqq \left(\frac{d_1\gamma_1}{\theta d_3\gamma_2}\right)^{\frac{\gamma_1}{\gamma_2-\gamma_1}}d_1 \left(1-\frac{\gamma_1}{\gamma_2}\right) .
	\]
	As $\lambda > 0$ was arbitrary, we conclude that, for all $\eps \in (0,1]$, we have
\begin{equation} \label{eq:before-interpolation}
F(t) 
                \le \widetilde{c}_1 \exp\left(\widetilde{c}_2 \left(\frac{1}{t-s}\right)^{\frac{\gamma_1\gamma_3}{\gamma_2-\gamma_1}}\right) 
		\Bigl( \eps^{-\frac{\theta}{1-\theta}} G(t) + \eps F(s)\Bigr).
\end{equation}
        Since $(U(t,s))_{0\leq s\leq t\leq T}$ is exponentially bounded, there exist $M\geq 1$ and $\omega\in\R$ such that $F(t)\leq M\ee^{\omega (t-s)}F(s)$ for all $0\leq s\leq t\leq T$. Since $\widetilde c_1 \geq 1$, $M \geq 1$, and $(\theta^\theta (1-\theta)^{1-\theta})^{-1} \geq (\theta / (1-\theta))^{1-\theta}$ for all $\theta \in (0,1)$, we have
        \begin{multline*}
         \max \left\{ \frac{1}{\theta^{\theta}(1 - \theta)^{1-\theta}} \, \widetilde{c}_1  \exp\left(\widetilde{c}_2 \left(\frac{1}{t-s}\right)^{\frac{\gamma_1\gamma_3}{\gamma_2-\gamma_1}}\right)\,,\;
                    \Bigl(\frac{\theta}{1 - \theta}\Bigr)^{1- \theta} M \euler^{\omega (t-s)}
         \right\}
         \\
         \leq \frac{1}{\theta^{\theta}(1 - \theta)^{1-\theta}} M \,  \widetilde c_1   \exp\left(\widetilde{c}_2 \left(\frac{1}{t-s}\right)^{\frac{\gamma_1\gamma_3}{\gamma_2-\gamma_1}} +\omega_+ (t-s) \right) ,
        \end{multline*}
        where $\omega_+ \coloneqq \max\{\omega , 0\}$. Thus, setting $\widetilde{C}_1 \coloneqq (\theta^{\theta}(1 - \theta)^{1-\theta})^{-1} M \,  \widetilde{c}_1$, $\widetilde{C}_2 \coloneqq \widetilde{c}_2$, and $\widetilde{C}_3 \coloneqq \omega_+$, the statement of the theorem follows from \eqref{eq:before-interpolation} and Lemma~\ref{lem:interpolation}.
\end{proof}
\begin{rem}
For autonomous systems (i.e.\ $A(\cdot)$ and $C(\cdot)$ are time-independent) on Hilbert spaces with self-adjoint generator $A$ with purely discrete spectrum, the interpolation estimate in Theorem \ref{thm:interpolation} is equivalent to the uncertainty principle \eqref{eq:ass:uncertainty} with spectral projectors $P_\lambda$, see \cite[Theorem~2.1]{PhungWX-17}. Note that, in this case, \eqref{eq:ass:dissipation} is trivial. As was pointed out in \cite[Remark~2.2]{PhungWX-17}, the fact that the system is autonomous is crucial to obtain the equivalence.
\end{rem}
\begin{proof}[Proof of Theorem~\ref{thm:observability}]
By H\"older's inequality, it suffices to prove the case $r = 1$. Let $x_0\in X$. 
Since $(U(t,s))_{0\leq s\leq t\leq T}$ is exponentially bounded, there exist $M\geq 1$ and $\omega\in\R$ such that $F(\tau)\leq M\ee^{\omega (\tau-t)}F(t) \leq M \ee^{\omega_+ (\tau-t)}F(t)$ for all $0\leq t\leq \tau\leq T$, where $\omega_+ = \max\{\omega,0\}$.
Let $\theta\in (0,1)$. For all $0\leq s<t<\tau\leq T$, we apply Theorem \ref{thm:interpolation} and obtain
\begin{align} \label{eq:Ftau1}
  F(\tau)
  \leq 
  M \widetilde C_1 \exp \left(\frac{\widetilde C_2}{(t-s)^{\frac{\gamma_1 \gamma_3}{\gamma_2 - \gamma_1}}} +   \widetilde C_3 (\tau - s)\right) G(t)^{1-\theta} F(s)^{\theta} .
\end{align}
For the moment, let us fix $q \in (0,1)$, which will be specified below. Let $\ell$ be a Lebesgue point of $E$. By \cite[Proposition~2.1]{PhungW-13}, there exists a sequence~$(\ell_m)_{m \in \N}$ in $[0,T]$ with $\ell_m \to \ell$ such that, for all $m\in\N$, we have
  \begin{align*}
    \ell_m > \ell_{m+1}, \quad \ell_{m+1}-\ell_{m+2} = q \, (\ell_m - \ell_{m+1}), \quad\text{and}\quad \abs{E\cap (\ell_{m+1},\ell_m)} \geq \frac{\ell_{m}-\ell_{m+1}}{3} .
  \end{align*}
  We set $\xi_m \coloneqq \ell_{m+1} + (\ell_m - \ell_{m+1})/6$, $s=\ell_{m+1}$, and $\tau=\ell_m$. 
  Then, for $t \in (\xi_{m} , \ell_m)$, we have $t-s \geq \xi_m - \ell_{m+1} = (\ell_m - \ell_{m+1})/6$. Applying \eqref{eq:Ftau1}, we obtain for all $t \in (\xi_m, \ell_m)$ that
  \begin{align*}
  F(\ell_m)
  &\leq
  \widetilde{c}_1 \exp \left(\frac{\widetilde{c}_2}{\delta_m^{\frac{\gamma_1 \gamma_3}{\gamma_2 - \gamma_1}}} +   \widetilde C_3 \delta_m\right) G(t)^{1-\theta} F(\ell_{m+1})^{\theta} ,
\end{align*}
  where $\widetilde{c}_1 \coloneqq M \widetilde C_1$, $\widetilde{c}_2 \coloneqq 6^{\gamma_1 \gamma_3 / (\gamma_2 - \gamma_1)} \widetilde C_2$, and $\delta_m\coloneqq \ell_m-\ell_{m+1}$. Let $\eps>0$. 
  Then Young's inequality 
$
    ab
    \leq 
    \eps a^{1/\theta}  
    + 
    \eps^{-\theta / (1-\theta)} (1 - \theta) \theta ^{\theta  / (1 - \theta)} b^{1/(1 - \theta)} 
$
  with $a=F(\ell_{m+1})^\theta$ yields
	\[
		F(\ell_m) \leq \eps F(\ell_{m+1}) 
		+ \eps^{-\frac{\theta}{1-\theta}} (1 - \theta) \theta^{\frac{\theta }{1 - \theta }} \, \widetilde{c}_1^{\, \frac{1}{1-\theta}} \exp \left(\frac{\widetilde{c}_2}{(1-\theta)\delta_m^{\frac{\gamma_1 \gamma_3}{\gamma_2 - \gamma_1}}} +   \frac{\widetilde{C}_3}{1-\theta} \delta_m\right) 
        G(t) .
    \]
	Taking integral means (with respect to $t$) on $E\cap [\xi_m,\ell_m]$ and using that, by construction, we have $\lvert E\cap [\xi_m,\ell_m] \rvert \geq \delta_m/6$, we obtain
	\begin{multline*}
		F(\ell_m)   \leq \eps F(\ell_{m+1}) \\
		 + \eps^{-\frac{\theta}{1-\theta}} 
                (1 - \theta) \theta^{\frac{\theta }{1 - \theta }} \, 
                \widetilde{c}_1^{\, \frac{1}{1-\theta}} \exp \left(\frac{\widetilde{c}_2}{(1-\theta)\delta_m^{\frac{\gamma_1 \gamma_3}{\gamma_2 - \gamma_1}}} +   \frac{\widetilde{C}_3}{1-\theta} \delta_m\right)\frac{6}{\delta_m} \int_{\ell_{m+1}}^{\ell_m} \1_E(t)G(t)\d t
	\end{multline*}
	and therefore
	\begin{multline*}
          \eps^{\frac{\theta}{1-\theta}} \delta_m
          \exp\left({-\frac{\widetilde{c}_2}{(1-\theta)\delta_m^{\frac{\gamma_1 \gamma_3}{\gamma_2 - \gamma_1}}}}\right)  
          F(\ell_m)
          - 
          \eps^{\frac{1}{1-\theta}} \delta_m 
          \exp\left({-\frac{\widetilde{c}_2}{(1-\theta)\delta_m^{\frac{\gamma_1 \gamma_3}{\gamma_2 - \gamma_1}}}}\right)  F(\ell_{m+1}) \\
          \leq  
          (1 - \theta) \theta^{\frac{\theta }{1 - \theta }}
          \, \widetilde{c}_1^{\, \frac{1}{1-\theta}} 6 
          \exp\left({\frac{\widetilde{C}_3}{1-\theta} T}\right) \int_{\ell_{m+1}}^{\ell_m} \1_E(t)G(t)\d t.
	\end{multline*}
	Setting $\eps\coloneqq q\exp \bigl(-(1-\theta) / \delta_m^{\gamma_1 \gamma_3 / (\gamma_2 - \gamma_1)} \bigr)$ yields
	\begin{multline*}
		\delta_m 
                \exp\left({-\frac{\frac{\widetilde{c}_2 }{ 1-\theta} +\theta}{\delta_m^{\frac{\gamma_1 \gamma_3}{\gamma_2 - \gamma_1}}}}\right)  
                F(\ell_m) - 
                q\delta_m 
                \exp\left({-\frac{\frac{\widetilde{c}_2}{1-\theta} + 1}{\delta_m^{\frac{\gamma_1 \gamma_3}{\gamma_2 - \gamma_1}}}}\right)  
                F(\ell_{m+1}) \\
                \leq 
                q^{-\frac{\theta}{1-\theta}} 
                (1 - \theta) \theta^{\frac{\theta }{1 - \theta }}
                \, \widetilde{c}_1^{\, \frac{1}{1-\theta}} 6 
                \exp\left(\frac{\widetilde{C}_3}{1-\theta} T \right)
                \int_{\ell_{m+1}}^{\ell_m} \1_E(t)G(t)\d t.
	\end{multline*}
        Now we set $q \coloneqq \Bigl(\frac{\frac{\widetilde{c}_2}{1-\theta} + \theta}{\frac{\widetilde{c}_2}{1-\theta} + 1}\Bigr)^{\frac{\gamma_2 - \gamma_1}{\gamma_1 \gamma_3}}$. With this choice, we have
\begin{align*}
  \frac{\frac{\widetilde{c}_2}{1-\theta}+1}{\delta_m^{\frac{\gamma_1 \gamma_3}{\gamma_2 - \gamma_1}}}
  = 
  \frac{\frac{\widetilde{c}_2}{1-\theta} + \theta}{\frac{\widetilde{c}_2}{1-\theta} + 1}
  \;
  \frac{\frac{\widetilde{c}_2}{1-\theta}+1}{(q\delta_m)^{\frac{\gamma_1 \gamma_3}{\gamma_2 - \gamma_1}}}
  =
  \frac{\frac{\widetilde{c}_2}{1-\theta}+\theta}{\delta_{m+1}^{\frac{\gamma_1 \gamma_3}{\gamma_2 - \gamma_1}}}
\end{align*}
which leads us to the estimate
	\begin{align*}
          &\delta_m
          \exp\left({-\frac{\frac{\widetilde{c}_2}{1-\theta}+\theta}{\delta_m^{\frac{\gamma_1 \gamma_3}{\gamma_2 - \gamma_1}}}}\right)
          F(\ell_m) - 
          \delta_{m+1}
          \exp\left({-\frac{\frac{\widetilde{c}_2}{1-\theta}+\theta}{\delta_{m+1}^{\frac{\gamma_1 \gamma_3}{\gamma_2 - \gamma_1}}}}\right) 
          F(\ell_{m+1}) \\
         &\qquad 
          \leq  
          q^{-\frac{\theta}{1-\theta}} 
          (1 - \theta) \theta^{\frac{\theta }{1 - \theta }}
          \, \widetilde{c}_1^{\, \frac{1}{1-\theta}} 6 
          \exp\left({\frac{\widetilde{C}_3}{1-\theta} T}\right) 
          \int_{\ell_{m+1}}^{\ell_m} \1_E(t)G(t)\d t.
	\end{align*}
	Taking the sum over all $m\in\N$, a telescoping sum argument yields
        \begin{align*}
          &\delta_1
          \exp\left({-\frac{\frac{\widetilde{c}_2}{1-\theta}+\theta}{\delta_1^{\frac{\gamma_1 \gamma_3}{\gamma_2 - \gamma_1}}}}\right)  
          F(\ell_1) \\
          &\qquad
          \leq  
          q^{-\frac{\theta}{1-\theta}} 
          (1 - \theta) \theta^{\frac{\theta }{1 - \theta }}
          \, \widetilde{c}_1^{\, \frac{1}{1-\theta}} 6 
          \exp\left({\frac{\widetilde{C}_3}{1-\theta} T} \right)
          \int_{\ell}^{\ell_1} \1_E(t)G(t)\d t.
        \end{align*}
	Hence,
        \[
          F(\ell_1)
          \leq 
          q^{-\frac{\theta}{1-\theta}} 
          (1 - \theta) \theta^{\frac{\theta }{1 - \theta }}
          \, \widetilde{c}_1^{\, \frac{1}{1-\theta}} 6  
          \;
          \frac{\exp\left(
                \frac{\frac{\widetilde{c}_2}{1-\theta}+\theta}{(\ell_1-\ell_{2})^{\frac{\gamma_1 \gamma_3}{\gamma_2 - \gamma_1}}}
                + \frac{\widetilde{C}_3}{1-\theta} T\right)}{\ell_1-\ell_2} 
          \int_E G(t)\d t.
        \]
	Now, $F(T)\leq M\euler^{\omega (T-\ell_1)} F(\ell_1)$ yields the assertion.
\end{proof}

\begin{rem}
\label{rem:constants}
In this remark, we give explicit estimates on the constants appearing in Theorems~\ref{thm:interpolation} and \ref{thm:observability}.
    The proof of Theorem \ref{thm:interpolation} shows
    \begin{align*}
  \widetilde C_1 & = \frac{1}{\theta^{\theta}(1 - \theta)^{1-\theta}} \,M \max\big\{d_0, (1 + d_0 \norm{C(\cdot)}_\infty)d_2\big\},\\
\widetilde C_2 & = \Bigl(\frac{d_1\gamma_1}{\theta d_3\gamma_2}\Bigr)^{\frac{\gamma_1}{\gamma_2-\gamma_1}}d_1 \Bigl(1-\frac{\gamma_1}{\gamma_2}\Bigr),\\
\widetilde C_3 & = \omega_+.
\end{align*}
If $E=[0,T]$, then the constant $C_{\mathrm{obs}}$ in Theorem~\ref{thm:observability} can be made explicit as well. Indeed, choosing $\ell=0$ and $\ell_{m+1}=q^m T$ for $m\in\N_0$, we see that $\ell_1=T$ and $\ell_1-\ell_2 
  = (1-q)T$ and the proof of Theorem~\ref{thm:observability} yields the estimate
	\[C_{\mathrm{obs}}\leq \frac{{C}_1}{T^{1/r}} \exp \biggl(\frac{{C}_2}{T^{\frac{\gamma_1 \gamma_3}{\gamma_2 - \gamma_1}}} +   {C}_3 T\biggr) ,
	\]
	where $T^{1/\infty} \coloneqq 1$ and
	\begin{align*}
		{C}_1 
                & = q^{-\frac{\theta}{1-\theta}} (1 - \theta) \theta^{\frac{\theta }{1 - \theta }}
                M^{\frac{1}{1-\theta}} \widetilde C_1^{\frac{1}{1-\theta}} 
                \frac{6}{1-q} ,
         &        
		{C}_2 & = \frac{6^{\frac{\gamma_1 \gamma_3}{\gamma_2 - \gamma_1}}\frac{\widetilde C_2}{1-\theta}+\theta}{(1-q)^{\frac{\gamma_1\gamma_3}{\gamma_2-\gamma_1}}}, 
		\\
		{C}_3 & = \frac{\omega_+}{1-\theta},
		 & 
		q &= \Biggl(\frac{6^{\frac{\gamma_1 \gamma_3}{\gamma_2 - \gamma_1}}\frac{\widetilde C_2}{1-\theta} + \theta}{6^{\frac{\gamma_1 \gamma_3}{\gamma_2 - \gamma_1}}\frac{\widetilde C_2}{1-\theta} + 1}\Biggr)^{\frac{\gamma_2 - \gamma_1}{\gamma_1 \gamma_3}} ,
    \end{align*}
	with $\theta \in (0,1)$. In particular, Theorem~\ref{thm:observability} yields a non-autonomous version of the results in \cite[Theorem~2.1]{GallaunST-20} and \cite[Theorem~A.1]{BombachGST-20}.
\end{rem}

\section{Observability for Non-autonomous Elliptic Operators}\label{sec:elliptic}
In this section, we apply the preceding theory to the evolution family associated with a non-autonomous parabolic equation on the domain $\R^d$. 

\subsection{Non-autonomous Elliptic Operators}
\label{subsec:na_elliptic_ops}
As a preparation, let us introduce the operators and notions used throughout this section.
Our aim is to define a family of non-autonomous differential operators and derive $\Ell^p$-bounds for the associated evolution family.
Let $T>0$.

\begin{defn}[Non-autonomous Elliptic Polynomial]
	\label{defn:ellipticPolynomial}
	Let $m \in \N$. For $\alpha \in \N_0^d$ with $\abs{\alpha}\leq m$, let $a_\alpha\from [0,T] \to \C$. Then we call $\fa\from [0,T] \times \R^d\to \C$ given by
	\begin{align*}
		\fa(t,\xi)  \coloneqq  \sum_{|\alpha| \leq m} a_\alpha(t) (\ii \xi)^\alpha ,\quad t \in [0,T],~\xi \in \R^d,
	\end{align*}
	\emph{non-autonomous polynomial of degree $m$}.
	The \emph{principal symbol} of $\fa$ is given by
	\[\fa_m(t,\xi)  \coloneqq  \sum_{|\alpha| = m} a_\alpha(t) (\ii \xi)^\alpha ,\quad t \in [0,T],~\xi \in \R^d.\]
	We call $\fa$ \emph{uniformly strongly elliptic (with respect to $t$)} if there exists $c > 0$ such that, for all $t\in [0,T]$ and $\xi \in \R^d$, we have
	\begin{align}
		\label{eq:ellipticPolynomial}
		\Re \fa_m(t,\xi) \geq c |\xi|^m .
	\end{align}
\end{defn}
Note that uniform strong ellipticity implies that $m$ is even such that we will always have $m \geq 2$ for the degree of the non-autonomous polynomial $\fa$.

We define the Fourier transformation $\cF \colon \cS(\R^d) \to \cS(\R^d)$ on the \emph{Schwartz space} by
\[
  (\cF u)(\xi) \coloneqq \int_{\R^d} \ee^{-\ii x \cdot \xi} u (x) \d x ,\quad \xi \in \R^d.
\]
As usual, we extend $\cF$ and its inverse $\cF^{-1}$ to automorphisms of the space of \emph{tempered distributions} $\cS'(\R^d)$.

Using (uniformly strongly elliptic) non-autonomous polynomials as symbols of Fourier multipliers gives rise to a certain class of differential operators.

\begin{defn}[Elliptic Operator]
  Let $\fa$ be a non-autonomous polynomial of degree $m \geq 2$. 
  For $t\in [0,T]$, we define $A(t)\from \cS'(\R^d)\to \cS'(\R^d)$ by
  \[A(t)u \coloneqq \cF^{-1}(\fa(t,\cdot) \cF u) = \sum_{|\alpha| \leq m} a_\alpha(t) \partial^\alpha u.\]
  We call the family $(A(t))_{t\in[0,T]}$ the \emph{operator family associated with $\fa$}.
  If, furthermore, $\fa$ is uniformly strongly elliptic, we call $(A(t))_{t\in[0,T]}$ \emph{elliptic}. 
\end{defn}
Let $\fa$ be uniformly strongly elliptic and $(A(t))_{t \in [0,T]}$ the associated family of elliptic operators.
Note that $A(t)$ leaves $\cS(\R^d)$ invariant for all $t \in [0,T]$.
Moreover, for $p \in [1,\infty)$ and $t\in[0,T]$, the part $A_p(t)$ of $A(t)$ in $X \coloneqq \Ell^p(\R^d)$ is a closed and densely defined operator with $D(A_p(t)) = \WW^{p,m}(\R^d)$ for $p>1$, while only $\WW^{1,m}(\R^d)\subseteq D(A_1(t))$, see \cite[Chapter~8]{Haase-06}. Thus, let us denote $D^p \coloneqq D(A_p(t))$ for $t \in [0,T]$, noting that $D(A_p(t))$ does not depend on $t$. 
Furthermore, $\cS(\R^d)$ is dense in $D^p$ with respect to the graph norm of $A_p(t)$ for all $t \in [0,T]$.
Moreover, in case $p=\infty$, for $t\in[0,T]$, we set $A_\infty(t) \coloneqq \widetilde{A}_1(t)'$, where $(\widetilde{A}_1(t))_{t\in[0,T]}$ is the operator family on $\Ell^1(\R^d)$ associated with the uniformly strongly elliptic polynomial $\widetilde{\fa} \coloneqq \fa(\cdot,-\cdot)$.

For $p \in [1,\infty)$, we can associate the non-autonomous Cauchy problem
\begin{align*}
  \dot{u}(t) & = -A_p(t) u(t), \quad t \in (0,T],  \quad u(0) = u_0\in \Ell^p(\R^d)
\end{align*}
to the operator family $(A_p(t))_{t \in [0,T]}$.
Under certain conditions on the coefficients of~$\fa$, we will define a (strongly continuous) evolution family for~$(A_p(t))_{t \in [0,T]}$.

Let $\fa$ be a uniformly strongly elliptic polynomial of degree $m \geq 2$ with coefficients $a_\alpha \in \Ell^1(0,T)$ for $\abs{\alpha}\leq m$. 
Then, as a consequence of the ellipticity estimate~\eqref{eq:ellipticPolynomial}, for $0\leq s < t \leq T$, we have 
\begin{align*}
  \ee^{-\int\limits_s^t \fa(\tau,\cdot)\d \tau}\in \cS(\R^d) .
\end{align*}
Thus, for $0\leq s\leq t \leq T$, we define $U(t,s)\from \cS'(\R^d)\to \cS'(\R^d)$ by
\begin{align}\label{eq:defnEvoFam}
  U(s,s)u \coloneqq u,\qquad U(t,s) u \coloneqq \cF^{-1}\Bigl(\ee^{-\int\limits_s^t \fa(\tau,\cdot)\d \tau} \cF u\Bigr) ,\quad t>s .
\end{align}
It is easy to see that, for $0\leq s<t \leq T$, the operator $U(t,s)$ is given as a convolution operator with kernel $p_{t,s}\in \cS(\R^d)$ defined via
\begin{align}\label{eq:kernel}
	p_{t,s} \coloneqq 
		\cF^{-1}\ee^{-\int\limits_s^t \fa(\tau,\cdot)\d \tau} .
\end{align}

The next lemma collects several algebraic properties of $(U(t,s))_{0\leq s\leq t\leq T}$.

\begin{lem}\label{lem:evoFam}
	Let $(U(t,s))_{0\leq s \leq t \leq T}$ be the operator family defined in~\eqref{eq:defnEvoFam}.
        \begin{enumerate}[(a)]
		\item\label{it:evoAlgebraic} For $0\leq r \leq s \leq t \leq T$, we have that
		\begin{align*}
			U(s,s) = \id \quad\text{and}\quad U(t,r) = U(t,s) U(s,r) .
		\end{align*}
		Moreover, $p_{t,r} = p_{t,s} \ast p_{s,r}$ for all $0\leq r<s<t \leq T$.
	      \item\label{it:evoOnLp} For $p \in [1,\infty]$ and $0\leq s\leq t \leq T$, $U(t,s)$ leaves $\Ell^p(\R^d)$ invariant.
    \end{enumerate}
\end{lem}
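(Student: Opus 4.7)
The statement is essentially a Fourier-analytic computation combined with Young's convolution inequality, so the plan is short and computational. I would begin by recording the key Fourier-analytic facts that make everything work: the multiplier $e^{-\int_s^t \fa(\tau,\cdot)\d\tau}$ lies in $\cS(\R^d)$ (by uniform strong ellipticity and $a_\alpha \in \Ell^1(0,T)$), so both multiplication by it on $\cS'(\R^d)$ and its inverse Fourier transform $p_{t,s}$ in $\cS(\R^d)$ are unambiguously defined, and $\cF \cF^{-1} = \id$ on $\cS'(\R^d)$.

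For part~\eqref{it:evoAlgebraic}, the first two identities follow by unwinding the definition in~\eqref{eq:defnEvoFam}. For $U(s,s) = \id$, the exponent vanishes, so the multiplier equals $1$. For the composition law, I would compute
\begin{align*}
U(t,s)U(s,r)u
&= \cF^{-1}\Bigl(\ee^{-\int_s^t \fa(\tau,\cdot)\d\tau}\, \cF \cF^{-1}\bigl(\ee^{-\int_r^s \fa(\tau,\cdot)\d\tau}\cF u\bigr)\Bigr)\\
&= \cF^{-1}\Bigl(\ee^{-\int_r^t \fa(\tau,\cdot)\d\tau}\cF u\Bigr) = U(t,r)u,
\end{align*}
using additivity of the integral in $\tau$. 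For the convolution identity, since $p_{t,s}, p_{s,r} \in \cS(\R^d)$, their convolution lies in $\cS(\R^d)$ and satisfies $\cF(p_{t,s}\ast p_{s,r}) = \cF p_{t,s} \cdot \cF p_{s,r}$. By definition of the kernels in~\eqref{eq:kernel}, the right-hand side equals $\ee^{-\int_r^t \fa(\tau,\cdot)\d\tau} = \cF p_{t,r}$, and injectivity of $\cF$ on $\cS(\R^d)$ yields the claim.

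For part~\eqref{it:evoOnLp}, I would first identify $U(t,s)$, a priori only defined on $\cS'(\R^d)$, with convolution against $p_{t,s}$: this is immediate for $u \in \cS(\R^d)$ from the convolution theorem, and extends to $\cS'(\R^d)$ because $p_{t,s}\in\cS(\R^d)$ and hence convolution with $p_{t,s}$ is a well-defined operation on tempered distributions that commutes with $\cF^{-1}$ applied to multiplication by $\cF p_{t,s}$. Since $p_{t,s}\in \cS(\R^d) \subset \Ell^1(\R^d)$, Young's convolution inequality gives
\begin{equation*}
\norm{U(t,s)u}_{\Ell^p(\R^d)} = \norm{p_{t,s}\ast u}_{\Ell^p(\R^d)} \leq \norm{p_{t,s}}_{\Ell^1(\R^d)}\norm{u}_{\Ell^p(\R^d)}
\end{equation*}
for every $p\in[1,\infty]$ and $u \in \Ell^p(\R^d)$, which proves invariance.

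There is no real obstacle here; the only minor care is the identification of the distributional definition of $U(t,s)$ with the classical convolution operator on $\Ell^p$, which however is a standard consequence of $p_{t,s}$ being Schwartz.
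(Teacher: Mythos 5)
Your proposal is correct and follows essentially the same route as the paper, which simply observes that part (a) is immediate from the definitions in \eqref{eq:defnEvoFam} and \eqref{eq:kernel} and that part (b) follows from Young's inequality applied to the convolution representation with the Schwartz kernel $p_{t,s}$. You merely spell out the details that the paper leaves implicit.
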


\begin{proof}
    The proof of~\eqref{it:evoAlgebraic} is straightforward from the definitions of the operator family $(U(t,s))_{0\leq s\leq t\leq T}$ in~\eqref{eq:defnEvoFam} and of its kernel in~\eqref{eq:kernel}. 
  Statement~\eqref{it:evoOnLp} is a consequence of Young's inequality, see, e.g.,~\cite[Theorem~1.2.10]{Grafakos-14}.
\end{proof}

Let~$(U(t,s))_{0\leq s\leq t \leq T}$ be as in~\eqref{eq:defnEvoFam} and $p\in [1,\infty]$. 
For $0\leq s\leq t \leq T$, we define $U_p(t,s) \coloneqq U(t,s)|_{\Ell^p(\R^d)}$. 
By Lemma~\ref{lem:evoFam}, $U_p(t,s)$ is a bounded operator on~$\Ell^p(\R^d)$ with $\|U_p(t,s)\|_{\cL(\Ell^p(\R^d))} = \|p_{t,s}\|_{\Ell^1(\R^d)}$ for $0\leq s<t \leq T$. 
Thus, $(U_p(t,s))_{0\leq s\leq t\leq T}$ is an evolution family on $\Ell^p(\R^d)$ in the sense of Definition~\ref{defn:evoFam}\eqref{it:algebraic}.

Under suitable assumptions on the coefficients $a_\alpha$, it is possible to show that the evolution family $(U_p(t,s))_{0\leq s\leq t\leq T}$ is strongly continuous and exponentially bounded. In fact, the evolution family $(U_p(t,s))_{0\leq s\leq t\leq T}$ can be seen as the solution operator to the non-autonomous Cauchy problem~\eqref{eq:evoProblem} for $(A_p(t))_{t \in [0,T]}$. 
The proof of these facts is postponed to Appendix~\ref{sec:properties}. 
The following theorem summarizes all of these properties.

\begin{thm}\label{thm:evoFamForElliptic}
	Let $\fa$ be a uniformly strongly elliptic polynomial of degree $m \geq 2$ with coefficients $a_\alpha \in \Ell^\infty(0,T)$ for $|\alpha| \leq m$.
	Let $(U(t,s))_{0\leq s \leq t \leq T}$ be defined as in~\eqref{eq:defnEvoFam}.
	\begin{enumerate}[(a)]
            \item\label{it:expEvoFam} Let $p \in [1,\infty]$. Then $(U_p(t,s))_{0\leq s \leq t \leq T}$ is an exponentially bounded evolution family. 
	\item Let $p\in (1,\infty)$. Then $(U_p(t,s))_{0\leq s \leq t \leq T}$ is 
	the unique evolution family for the family of operators $(A_p(t))_{t \in [0,T]}$.
	\end{enumerate}
\end{thm}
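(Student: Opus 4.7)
My plan splits the proof into the two parts of the statement. For part~(a), the strategy is to bound $\|p_{t,s}\|_{\Ell^1(\R^d)}$ uniformly, since Young's convolution inequality then yields $\|U_p(t,s)\|_{\cL(\Ell^p(\R^d))} \leq \|p_{t,s}\|_{\Ell^1(\R^d)}$ for every $p \in [1,\infty]$, which together with Lemma~\ref{lem:evoFam}\,\eqref{it:evoAlgebraic} delivers exponential boundedness. For part~(b), after restricting to $p \in (1,\infty)$, I will verify conditions~\eqref{it:strongSolution} and~\eqref{it:strongSolutionWRTs} of Definition~\ref{defn:evoFam}, and invoke Proposition~\ref{prop:uniqueness}\,(b) for uniqueness.

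To obtain the kernel bound, I start from a pointwise estimate on the symbol $b_{t,s}(\xi) \coloneqq \exp\bigl(-\int_s^t \fa(\tau,\xi)\d\tau\bigr)$. Uniform strong ellipticity yields $\Re \fa_m(\tau,\xi) \geq c|\xi|^m$, while the lower-order terms contribute at most $C(1+|\xi|^{m-1})$ in real part by $\Ell^\infty$-boundedness of the coefficients; Young's inequality absorbs these into the principal part to give $\Re \fa(\tau,\xi) \geq \tfrac{c}{2}|\xi|^m - K$, and hence
\[
  |b_{t,s}(\xi)| \leq \ee^{K(t-s)}\,\ee^{-(c/2)(t-s)|\xi|^m}.
\]
Derivatives $\partial_\xi^\beta b_{t,s}$ enjoy the same Gaussian-type decay modulo an additional polynomial factor in $\xi$ from the chain rule. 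The key step is a rescaling $\xi = (t-s)^{-1/m}\eta$: for the homogeneous part of the symbol this reduces $\|p_{t,s}\|_{\Ell^1}$ to a fixed universal constant (as in the classical heat-kernel case), while the lower-order terms appear as bounded multiplicative corrections. Combining this with a decay-in-$x$ bound of the form $\|p_{t,s}\|_{\Ell^1} \lesssim \|(1-\Delta_\xi)^N b_{t,s}\|_{\Ell^1}$ for $N > d/2$ yields $\|p_{t,s}\|_{\Ell^1(\R^d)} \leq M \ee^{\omega(t-s)}$.

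For part~(b), I fix $p \in (1,\infty)$ and use that $D^p = \WW^{m,p}(\R^d)$ carries a graph norm equivalent to the Sobolev norm uniformly in $t$, and that $\cS(\R^d)$ is dense in $D^p$ in that graph norm. For Schwartz data $x_s \in \cS(\R^d)$ the identity $\partial_t b_{t,s} = -\fa(t,\cdot) b_{t,s}$ holds almost everywhere in $t$; integrating on the Fourier side and applying $\cF^{-1}$ gives, in $\Ell^p(\R^d)$,
\[
  U_p(t,s) x_s - x_s = -\int_s^t A_p(\tau)\, U_p(\tau,s) x_s \d\tau,
\]
with the integrand uniformly bounded in $\tau$. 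Since $A_p(\tau)$ and $U_p(\tau,s)$ are both Fourier multipliers, they commute on $\cS(\R^d)$, so density combined with the uniform bound from part~(a) and $\Ell^\infty$-boundedness of the $a_\alpha$ extends the identity to every $x_s \in D^p$, yielding~\eqref{it:strongSolution}. Condition~\eqref{it:strongSolutionWRTs} is obtained analogously using the symmetric identity $\partial_s b_{t,s} = \fa(s,\cdot) b_{t,s}$, after which Proposition~\ref{prop:uniqueness}\,(b) supplies uniqueness.

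The main obstacle is the uniform-in-$(t,s)$ kernel estimate: crude bounds on $\|p_{t,s}\|_{\Ell^1}$ degenerate as $t-s \downarrow 0$, so the rescaling has to be carried out carefully enough to isolate the correct homogeneity from the principal symbol and to treat both the lower-order terms and the $t$-dependence of the coefficients as bounded perturbations. Once that estimate is secured, the verification of~\eqref{it:strongSolution} and~\eqref{it:strongSolutionWRTs} via Schwartz approximation, and the appeal to Proposition~\ref{prop:uniqueness} for uniqueness, are of a more standard flavour.
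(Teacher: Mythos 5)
Your overall architecture coincides with the paper's: Lemma~\ref{lem:evoFam} plus a uniform bound on $\norm{p_{t,s}}_{\Ell^1(\R^d)}$ gives (a), and (b) is reduced to checking conditions~\eqref{it:strongSolution} and~\eqref{it:strongSolutionWRTs} of Definition~\ref{defn:evoFam} (the paper's Proposition~\ref{prop:evoFamBounded}, which argues exactly as you do via $\partial_t p_{t,s}=-A_p(t)p_{t,s}$, commutation of the Fourier multipliers, the uniform equivalence of the graph norm with the $\WW^{m,p}$-norm, and density of $\cS(\R^d)$) followed by Proposition~\ref{prop:uniqueness}. The genuine difference is in the kernel estimate. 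The paper (Lemma~\ref{lem:kernel_bound}, following Ter~Elst--Robinson) shifts the contour into $\C^d$, writing $p_{t,s}(x)$ as $\frac{1}{(2\pi)^d}\int \ee^{\ii x\cdot\xi}\ee^{-x\cdot\eta}\ee^{-\int_s^t\fa(\tau,\xi+\ii\eta)\d\tau}\d\xi$ and optimizing over $\eta$; this yields the pointwise Gaussian-type bound $\abs{p_{t,s}(x)}\lesssim (t-s)^{-d/m}\ee^{\omega(t-s)}\ee^{-C_2(\abs{x}^m/(t-s))^{1/(m-1)}}$, which is strictly stronger than what you obtain and is reused later (in the proof of Theorem~\ref{thm:obsImpliesMeanThick}). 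Your route --- parabolic rescaling $\xi=(t-s)^{-1/m}\eta$ combined with $2N>d$ integrations by parts --- only delivers polynomial decay in $x$, but that is entirely sufficient for the $\Ell^1$-bound needed here and avoids the complexification of the symbol. One word of caution on the order of operations in your sketch: the Sobolev-type inequality $\norm{p_{t,s}}_{\Ell^1}\lesssim\norm{(1-\Delta_\xi)^N b_{t,s}}_{\Ell^1}$ must be applied to the \emph{rescaled} symbol $\tilde b(\eta)\coloneqq b_{t,s}((t-s)^{-1/m}\eta)$ (using $\norm{p_{t,s}}_{\Ell^1}=\lVert\cF^{-1}\tilde b\rVert_{\Ell^1}$), not to $b_{t,s}$ itself: already the zeroth-order term $\norm{b_{t,s}}_{\Ell^1}\sim(t-s)^{-d/m}$ degenerates as $t-s\downarrow 0$, whereas for $\tilde b$ every derivative is bounded by $C_\beta(1+\abs{\eta})^{\abs{\beta}(m-1)}\ee^{-(c/2)\abs{\eta}^m}$ uniformly in $0\leq s<t\leq T$ because $(t-s)\cdot(t-s)^{-(m-1)/m}(1+\abs{\eta})^{m-1}\cdot(t-s)^{-1/m}\leq C(1+\abs{\eta})^{m-1}$. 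You flag the degeneration issue, so this is a matter of writing the rescaling first; with that ordering the argument closes.
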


\begin{proof}
  By Lemma~\ref{lem:evoFam}, $(U_p(t,s))_{0\leq s\leq t \leq T}$ is an evolution family and Lemma~\ref{lem:kernel_bound} yields the exponential bound.
    
  Moreover, Proposition~\ref{prop:evoFamBounded} yields that $(U_p(t,s))_{0\leq s\leq t \leq T}$ is an evolution family for $(A_p(t))_{t \in [0,T]}$ in case $p\in(1,\infty)$. Uniqueness follows from Proposition~\ref{prop:uniqueness}.
\end{proof}

\subsection{Observability}

In this subsection, we show an observability estimate for the evolution family $(U_p(s,t))_{0 \leq s \leq t \leq T}$ from Subsection~\ref{subsec:na_elliptic_ops}.
For this purpose, we introduce the notion of a \emph{thick} subset $\thickset$ of $\R^d$.
Loosely speaking, a thick subset is a set such that the portion of it in a hypercube is bounded away from zero no matter where the hypercube is located.
In the following, given a measurable set $\Omega \subseteq \R^d$, let $|\Omega|$ denote its Lebesgue measure.

\begin{defn}[Thick Set]
	Let $L \in (0,\infty)^d$ and $\rho > 0$. 
        \begin{enumerate}[(a)]
		\item A set $\thickset \subseteq \R^d$ is called \emph{$(L,\rho)$-thick} if $\thickset$ is measurable and, for all $x \in \R^d$, we have
		\[
		\left\lvert \thickset \cap \left( \bigtimes_{i = 1}^d (0, L_i) + x \right) \right\rvert
		\geq \rho \prod_{i=1}^d L_i .
		\]
		\item Let $T>0$. A family $(\thickset (t))_{t \in [0,T]}$ of sets $\thickset (t) \subseteq \R^d$ is called \emph{mean $(L,\rho)$-thick on $[0,T]$} if $\thickset (t)$ is measurable for all $t \in [0,T]$, the mapping $[0,T]\times \R^d \ni (t,x)\mapsto \1_{\thickset(t)}(x)$ is measurable, and, for all $x \in \R^d$, we have
		\[
		\frac{1}{T}\int_0^T \left\lvert \thickset (t) \cap \left( \bigtimes_{i = 1}^d (0, L_i) + x \right) \right\rvert \d t
		\geq \rho \prod_{i=1}^d L_i .
		\]
		\item Let $T>0$. A family $(\thickset (t))_{t \in [0,T]}$ of sets $\thickset (t) \subseteq \R^d$ is called \emph{uniformly $(L,\rho)$-thick on $[0,T]$} if $\thickset (t)$ is $(L,\rho)$-thick for all $t \in [0,T]$ and the mapping $[0,T]\times \R^d \ni (t,x)\mapsto \1_{\thickset(t)}(x)$ is measurable.
	\end{enumerate}
	We call $\thickset \subseteq \R^d$ \emph{thick} if there exist $L \in (0,\infty)^d$ and $\rho > 0$ such that $\thickset$ is $(L,\rho)$-thick.
        Likewise, $(\Omega(t))_{t \in [0,T]}$ is called \emph{mean/uniformly thick} if it is mean/uniformly $(L, \rho)$-thick on $[0,T]$ for some $L \in (0,\infty)^d$ and $\rho > 0$.
\end{defn}
Note that equivalent notions of (mean/uniform) thickness are obtained by replacing the hypercubes $\bigtimes_{i = 1}^d (0,L_i)$ with balls $\BB(0,R)$ with some radius $R>0$.

\begin{example}
    Let $\thickset_1 = [0,\infty)$, $\thickset_2 = (-\infty, 0]$, $T = 2$, and
    \begin{align*}
      \thickset(t) &\coloneqq \begin{cases}
        \thickset_1, \quad  t\in[0,1),  \\
        \thickset_2, \quad  t\in [1,2] . 
      \end{cases}
    \end{align*}
      Then $(\thickset(t))_{t\in[0,T]}$ is mean $(L,1/2)$-thick for all $L>0$ but not uniformly thick.
\end{example}

\begin{lem}\label{lem:measurableNorm}
    Let $\fa$ be a uniformly strongly elliptic polynomial of degree $m \geq 2$ with coefficients $a_\alpha \in \Ell^\infty(0,T)$ for $|\alpha| \leq m$,
	and let $(U(t,s))_{0\leq s \leq t \leq T}$ be defined as in~\eqref{eq:defnEvoFam}.
    For each $t\in[0,T]$, let $\thickset(t)\subseteq\R^d$ be measurable, and assume that $[0,T]\times \R^d \ni (t,x)\mapsto \1_{\thickset(t)}(x)$ is measurable. Let $p \in [1,\infty]$ and $u_0\in \Ell^p(\R^d)$. Then $[0,T]\ni t\mapsto \norm{\1_{\thickset(t)} U_p(t,0)u_0}_{\Ell^p(\R^d)}$ is measurable.
\end{lem}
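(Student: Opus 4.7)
The plan is to establish that $(t,x) \mapsto (U_p(t,0) u_0)(x)$ admits a jointly measurable representative on $[0,T] \times \R^d$ and then to deduce the stated measurability from Tonelli's theorem. For the first step, I would analyze the convolution kernel $p_{t,0}$ from~\eqref{eq:kernel}. Setting $b_\alpha(t) \coloneqq \int_0^t a_\alpha(\tau)\d\tau$, which is continuous in $t$ since $a_\alpha \in \Ell^\infty(0,T) \subseteq \Ell^1(0,T)$, one has
\[
  \int_0^t \fa(\tau,\xi)\d\tau = \sum_{|\alpha|\leq m} b_\alpha(t) (\ii\xi)^\alpha,
\]
a Carath\'eodory function in $(t,\xi)$ (continuous in $t$, polynomial in $\xi$) and hence jointly measurable; the same then holds for $(t,\xi)\mapsto \ee^{-\int_0^t\fa(\tau,\xi)\d\tau}$. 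The uniform ellipticity~\eqref{eq:ellipticPolynomial} combined with the $\Ell^\infty$-bounds on the coefficients provides, for every $t_0\in (0,T]$, an integrable majorant in $\xi$ for this symbol that is uniform for $t\in[t_0,T]$. Fubini then yields joint measurability of $(t,y)\mapsto p_{t,0}(y)$ on $(0,T]\times \R^d$.

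Second, since $p_{t,0}\in\cS(\R^d)$ and $u_0\in\Ell^p(\R^d)$ for any $p\in[1,\infty]$, the convolution $(p_{t,0}\ast u_0)(x) = \int_{\R^d} p_{t,0}(x-y) u_0(y) \d y$ is well-defined pointwise (for each $x$ the integrand is in $\Ell^1(\R^d)$ by H\"older's inequality, since Schwartz functions lie in every $\Ell^{q}$) and agrees with $U_p(t,0) u_0$ as an element of $\Ell^p(\R^d)$ by~\eqref{eq:defnEvoFam}. Fubini applied to the jointly measurable function $(t,x,y)\mapsto p_{t,0}(x-y) u_0(y)$ then shows that $(t,x)\mapsto (p_{t,0}\ast u_0)(x)$ is jointly measurable on $(0,T]\times\R^d$; extending by $U_p(0,0) u_0 = u_0$ at $t=0$ produces a jointly measurable representative $\Psi\colon [0,T]\times\R^d\to\C$ of $(t,x)\mapsto (U_p(t,0) u_0)(x)$.

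Finally, multiplying $\Psi$ by the jointly measurable indicator $(t,x)\mapsto \1_{\thickset(t)}(x)$ yields a jointly measurable function $\Phi$. For $p\in[1,\infty)$, Tonelli applied to $|\Phi|^p$ gives measurability of $t\mapsto \int_{\R^d} |\Phi(t,x)|^p \d x = \norm{\1_{\thickset(t)} U_p(t,0) u_0}_{\Ell^p(\R^d)}^p$, from which the claim follows. For $p=\infty$, I would use the characterization $\norm{\Phi(t,\cdot)}_{\Ell^\infty(\R^d)} > \lambda$ if and only if $|\{x\in\R^d: |\Phi(t,x)|>\lambda\}|>0$; Tonelli applied to the jointly measurable indicator $\1_{\{|\Phi|>\lambda\}}$ renders the latter a measurable condition on $t$, so the super-level sets of $t\mapsto \norm{\Phi(t,\cdot)}_{\Ell^\infty(\R^d)}$ are measurable. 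The main obstacle lies in Step~1, namely producing a $t$-uniform integrable majorant for the symbol on compact sub-intervals of $(0,T]$; this is where the uniform ellipticity estimate~\eqref{eq:ellipticPolynomial} and the $\Ell^\infty$-bounds on the coefficients together force an exponential decay of the symbol that is uniform in $t$ once $t$ is bounded away from~$0$.
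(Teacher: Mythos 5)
Your proof is correct, but it takes a genuinely different route from the paper. The paper's proof invokes Corollary~\ref{cor:strong_continuity}: strong continuity of $t\mapsto U_p(t,0)u_0$ in $\Ell^p(\R^d)$ for $p\in[1,\infty)$, and weak$^*$-continuity together with the duality pairing against $\Ell^1(\R^d)$ for $p=\infty$. That argument is short but leans on the appendix machinery (the multiplier space $\cO_{\mathrm{M}}(\R^d)$ and Proposition~\ref{prop:omconvergence}), and it is terse about the step you make explicit: because the indicator $\1_{\thickset(t)}$ varies with $t$, continuity of $t\mapsto U_p(t,0)u_0$ alone does not ``directly'' give measurability of the norm --- one still needs a jointly measurable representative of $(t,x)\mapsto (U_p(t,0)u_0)(x)$ and a Tonelli argument, which is precisely what you construct. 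Your kernel-based approach is self-contained: since $t\mapsto\int_0^t a_\alpha$ is continuous, the symbol $(t,\xi)\mapsto \ee^{-\int_0^t\fa(\tau,\xi)\d\tau}$ is in fact jointly continuous, and with the $t$-uniform Gaussian-type majorant from the ellipticity bound (cf.\ Lemma~\ref{lem:real_part_bound}) you even get joint continuity of $(t,y)\mapsto p_{t,0}(y)$ on $(0,T]\times\R^d$, so the ``main obstacle'' you flag is unproblematic. Your treatment of $p=\infty$ via super-level sets of the essential supremum and Tonelli neatly avoids the duality argument and lets all $p\in[1,\infty]$ be handled uniformly. The trade-off is length: the paper reuses results it needs elsewhere anyway, while your argument is longer but independent of the strong-continuity results and makes the measure-theoretic core of the lemma explicit.
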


\begin{proof}
  By Corollary~\ref{cor:strong_continuity}, $(U_p(t,s))_{0\leq s\leq t \leq T}$ is strongly continuous for $p\in [1,\infty)$ and strongly continuous w.r.t.\ the weak$^*$-topology for $p=\infty$. 
  For $p\in[1,\infty)$, this implies directly the measurability of $[0,T]\ni t\mapsto \norm{\1_{\thickset(t)} U_p(t,0)u_0}_{\Ell^p(\R^d)}$. For $p=\infty$, the measurability follows from the variational description of the $\Ell^\infty$-norm via the canonical pairing with $\Ell^1$-elements and the strong continuity of $(U_\infty(t,s))_{0\leq s\leq t \leq T}$ w.r.t.\ the topology $\sigma(\Ell^\infty(\R^d),\Ell^1(\R^d))$.
\end{proof}

Our first result shows that uniform thickness implies an observability estimate.

\begin{thm}\label{thm:obs}
	Let $\fa$ be a uniformly strongly elliptic polynomial of degree $m \geq 2$ with coefficients $a_\alpha\in \Ell^\infty(0,T)$ for $\abs{\alpha}\leq m$. 
        Let $(U (t,s))_{0\leq s\leq t \leq T}$ be as in~\eqref{eq:defnEvoFam}.
	Let $(\thickset (t))_{t \in [0,T]}$ be uniformly thick on $[0,T]$. 
	Let $E \subseteq [0,T]$ be measurable with positive Lebesgue measure and $r\in [1,\infty]$. Then there exists $C_{\mathrm{obs}}\geq 0$ such that, for all $p\in[1,\infty]$ and $u_0\in \Ell^p(\R^d)$, we have
	\[\norm{U_p(T,0)u_0}_{\Ell^p(\R^d)} \leq C_{\mathrm{obs}}\begin{cases}
                                            \Bigl(\int_{E} \norm{(U_p(t,0)u_0)|_{\thickset(t)}}_{\Ell^p(\thickset(t))}^r \d t\Bigr)^{1/r}, & r\in [1,\infty),\\
                                            \esssup_{t\in E} \norm{(U_p(t,0)u_0)|_{\thickset(t)}}_{\Ell^p(\thickset(t))}, & r=\infty.
	                                     \end{cases}\]
\end{thm}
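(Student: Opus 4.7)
The strategy is to apply Theorem~\ref{thm:observability} separately for each $p\in[1,\infty]$ with $X=Y=\Ell^p(\R^d)$ and observation operator $C(t)u \coloneqq u|_{\Omega(t)}$, and to check that all constants entering Hypothesis~\ref{hypo:evoFam} can be chosen independently of~$p$, so that the resulting $C_{\mathrm{obs}}$ is $p$-independent as well. Exponential boundedness of $(U_p(t,s))$ is Theorem~\ref{thm:evoFamForElliptic}\eqref{it:expEvoFam}; the estimate $\norm{C(t)}_{\cL(X,Y)}\leq 1$ is immediate; and measurability of $t\mapsto\norm{C(t)U_p(t,0)u_0}_Y$ is exactly Lemma~\ref{lem:measurableNorm}. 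It therefore only remains to produce a uniformly bounded family of projectors $(P_\lambda)_{\lambda>0}\subseteq\cL(\Ell^p(\R^d))$ realising the uncertainty principle~\eqref{eq:ass:uncertainty} and the dissipation estimate~\eqref{eq:ass:dissipation}. I would choose a smooth frequency cutoff
\begin{equation*}
   P_\lambda u \coloneqq \cF^{-1}\bigl(\chi(\cdot/\lambda)\, \cF u\bigr),\qquad \chi\in\CC_\cc^\infty(\R^d),~\chi\equiv 1 \text{ on } \BB(0,1),~\supp\chi\subseteq\BB(0,2).
\end{equation*}
A scaling argument shows $\cF^{-1}\chi(\cdot/\lambda)\in\Ell^1(\R^d)$ with $\Ell^1$-norm $\norm{\cF^{-1}\chi}_{\Ell^1}$ independent of $\lambda$, so by Young's inequality the family $(P_\lambda)$ is uniformly bounded on every $\Ell^p(\R^d)$, with a constant independent of $p$.

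The uncertainty principle~\eqref{eq:ass:uncertainty} then reduces to a Logvinenko--Sereda--Kovrijkine-type estimate in $\Ell^p$: for every $v\in\Ell^p(\R^d)$ with $\supp\cF v\subseteq\BB(0,2\lambda)$ and every $(L,\rho)$-thick set $\Omega\subseteq\R^d$,
\begin{equation*}
    \norm{v}_{\Ell^p(\R^d)} \leq d_0\,\ee^{d_1\lambda}\norm{v}_{\Ell^p(\Omega)},
\end{equation*}
with constants $d_0,d_1$ depending only on $L$, $\rho$, $d$, but neither on $p\in[1,\infty]$ nor on the particular thick set. This is precisely the ingredient underlying~\cite{GallaunST-20,BombachGST-20}. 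Applying it to $v=P_\lambda x$ and $\Omega=\Omega(t)$ yields~\eqref{eq:ass:uncertainty} with $\gamma_1=1$, and the uniform $(L,\rho)$-thickness of $(\Omega(t))_{t\in[0,T]}$ makes the constants $d_0,d_1$ uniform in $t\in[0,T]$.

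For the dissipation estimate, the operator $(\id-P_\lambda)U_p(t,s)$ is the Fourier multiplier with symbol
\begin{equation*}
    m_\lambda(t,s,\xi) \coloneqq \bigl(1-\chi(\xi/\lambda)\bigr)\exp\Bigl(-\int_s^t \fa(\tau,\xi)\d\tau\Bigr),
\end{equation*}
supported in $\{\abs{\xi}\geq\lambda\}$. Strong ellipticity combined with the $\Ell^\infty$-bound on the lower-order coefficients gives $\Re\fa(\tau,\xi)\geq \tfrac{c}{2}\abs{\xi}^m-K$ for a suitable $K\geq 0$, and on the support of $m_\lambda$ one may split $\tfrac{c}{2}\abs{\xi}^m(t-s)=\tfrac{c}{4}\abs{\xi}^m(t-s)+\tfrac{c}{4}\lambda^m(t-s)$ to pull out a factor $\ee^{-c\lambda^m(t-s)/4}$. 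A kernel estimate in the spirit of those from Appendix~\ref{sec:properties} then bounds $\norm{\cF^{-1}m_\lambda(t,s,\cdot)}_{\Ell^1(\R^d)}$ by $d_2\,\ee^{-d_3\lambda^m(t-s)}$ uniformly in $0\leq s\leq t\leq T$, and Young's inequality transfers this bound to $\cL(\Ell^p(\R^d))$ with $p$-independent constants. This yields~\eqref{eq:ass:dissipation} with $\gamma_2=m$, $\gamma_3=1$; since $m\geq 2>1=\gamma_1$, Hypothesis~\ref{hypo:evoFam} is satisfied, and Theorem~\ref{thm:observability} together with Remark~\ref{rem:constants} finishes the proof with the desired $p$-independent $C_{\mathrm{obs}}$. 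The main technical obstacle I expect is the $\Ell^1$-kernel bound for $m_\lambda$ with the precise exponential dependence $\ee^{-c\lambda^m(t-s)}$ while maintaining uniformity both in $(s,t)$ and in $p$; the Paley--Wiener/scaling arguments already used for $(U_p(t,s))$ in the appendix should, however, go through verbatim after replacing the full symbol by its high-frequency truncation.
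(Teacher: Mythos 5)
Your overall architecture coincides with the paper's: verify Hypothesis~\ref{hypo:evoFam} with the smooth frequency cutoffs $P_\lambda$, get the uncertainty principle~\eqref{eq:ass:uncertainty} with $\gamma_1=1$ from the Logvinenko--Sereda--Kovrijkine theorem applied to $P_\lambda x$ (identical to the paper, which cites \cite[Theorem~3]{Kovrijkine-01} and records the $p$- and $t$-uniform constants explicitly), track $p$-independence of all constants, and invoke Theorem~\ref{thm:observability}. The one genuine divergence is the dissipation estimate. You propose a direct $\Ell^1$ kernel bound for the truncated non-autonomous multiplier $m_\lambda(t,s,\xi)=(1-\chi(\xi/\lambda))\exp(-\int_s^t\fa(\tau,\xi)\,\drm\tau)$; the paper instead splits the symbol as $\fa=\widetilde{\fb}+\widetilde{\fa}$ with $\widetilde{\fb}(t,\xi)=\tfrac{c}{2}|\xi|^m$, so that $U_p(t,s)=\widetilde{V}_p(t,s)\widetilde{U}_p(t,s)$ with all factors commuting Fourier multipliers, applies the known \emph{autonomous} dissipation estimate \cite[Proposition~3.2]{BombachGST-20} to $(\id-P_\lambda)\widetilde{V}_p(t,s)$, and absorbs the exponential bound of $\widetilde{U}_p(t,s)$ into the decay for $\lambda$ above a threshold. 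This factorization is precisely what lets the non-autonomous case ride on the cited autonomous result without any new kernel analysis. Your route is viable but its key step is asserted rather than proved, and your claim that the Paley--Wiener arguments of Appendix~\ref{sec:properties} go through ``verbatim'' is not accurate: Lemma~\ref{lem:kernel_bound} obtains the Gaussian bound by shifting the integration contour into $\C^d$, which is incompatible with the compactly supported, non-analytic factor $1-\chi(\cdot/\lambda)$. To make your direct estimate rigorous you would instead need the derivative-counting argument behind \cite[Proposition~3.2]{BombachGST-20} (bounding $\partial^\alpha m_\lambda$ for $|\alpha|>d$ after pulling out $\ee^{-c\lambda^m(t-s)/4}$ via your split on $\{|\xi|\geq\lambda\}$), i.e.\ essentially re-proving that proposition for a time-averaged symbol --- doable, but it is the hard technical content, not a routine transfer.
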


\begin{rem}
    In the situation of Theorem \ref{thm:obs}, if $E=[0,T]$, then we obtain
    \[C_{\mathrm{obs}} \leq \frac{C_1}{T^{1/r}}\exp\Bigl(\frac{C_2}{T^{\frac{\gamma_1\gamma_3}{\gamma_2-\gamma_1}}}+C_3 T\Bigr)\]
    for some $C_1, C_2, C_3\geq 0$, $\gamma_1 = \gamma_3 = 1$, and $\gamma_2 = m$; cf.\ Remark \ref{rem:constants}.
\end{rem}

\begin{proof}[Proof of Theorem~\ref{thm:obs}]
This proof consists of two parts. In the first part, we will show a dissipativity estimate, and, in the second part, we will derive an abstract uncertainty estimate. 
As both estimates do not depend on the value of $p$, it follows from Theorem~\ref{thm:observability} that also the observability constant $C_{\mathrm{obs}}$ can be chosen independently of $p$. 

We start by introducing a family of smooth frequency cutoffs.
To this end, let $\eta\in \CC_{\mathrm c}^\infty ([0,\infty) )$ with $0\leq\eta\leq 1$ such that $\eta (r) = 1$ for $r\in [0,1/2]$ and $\eta (r) = 0$ for $r\geq 1$. 
    For $\lambda > 0$, we define $\chi_\lambda\from \R^d\to \R$ by $\chi_\lambda (\xi) \coloneqq \eta (\lvert \xi \rvert / \lambda)$. Since $\chi_\lambda \in \mathcal{S}(\R^d)$ for all $\lambda >0$, we have $\mathcal{F}^{-1}\chi_\lambda \in \mathcal{S}(\R^d)$. For $\lambda > 0$, we define $P_\lambda \from  \Ell^p(\R^d) \to \Ell^p(\R^d)$ by $P_\lambda f \coloneqq (\mathcal{F}^{-1} \chi_\lambda) \ast f$.
    Then, for all $\lambda > 0$, the operator $P_\lambda$ is a bounded linear operator, the family $(P_\lambda)_{\lambda>0}$ is uniformly bounded by $\lVert \mathcal{F}^{-1} \chi_1 \rVert_{\Ell^{1}(\R^d)}$, and, for all $f\in \mathcal{S}(\R^d)$, we have $P_\lambda f\in\mathcal{S}(\R^d)$, $\cF P_\lambda f = \chi_\lambda \cF f \in \mathcal{S}(\R^d)$, and $\supp \cF P_\lambda f \subseteq \{y \in \R^d \colon \lvert y \rvert \leq \lambda\} \subseteq [-\lambda, \lambda]^d$, see \cite[Theorem~3.3]{GallaunST-20} for details.
	
	Since $\fa$ is uniformly strongly elliptic, there exists $c > 0$ such that, for all $t \in [0,T]$ and all $\xi \in \R^d$, we have $\re \fa_m (t,\xi) \geq c \lvert \xi \rvert^m$. 
        We define the (autonomous) uniformly strongly elliptic polynomials $\fb, \widetilde{\fb} \from [0,T] \times \R^d \to \C$ by
	\[\fb(t,\xi) \coloneqq \lvert \xi\rvert^m,\quad \widetilde{\fb} (t , \xi) \coloneqq \frac{c}{2}\lvert \xi \rvert^m,\]
	and set $\widetilde{\fa} \coloneqq \fa - \widetilde{\fb}$. Note that $\widetilde{\fa}$ is also uniformly strongly elliptic.
	
        Let $(V(t,s))_{0\leq s\leq t \leq T}$, $(\widetilde{V}(t,s))_{0\leq s\leq t \leq T}$, and $(\widetilde{U}(t,s))_{0\leq s\leq t \leq T}$ be as in \eqref{eq:defnEvoFam} for $\fb$, $\widetilde{\fb}$, and $\widetilde{\fa}$, respectively. Note that $\widetilde{V}(t,s) = V(\frac{c}{2}t,\frac{c}{2}s)$ for all $0\leq s\leq t\leq T$.
	
	Let $p\in [1,\infty]$.
        For $f\in \Ell^p(\R^d)$ and $0\leq s\leq t \leq T$, we have by definition
	\begin{align*}
		U_p(t,s)f & = \cF^{-1} \Bigl(\ee^{-\int\limits_s^t (\widetilde{\fb}(\tau,\cdot) + \widetilde{\fa}(\tau,\cdot) ) \d \tau} \cF f\Bigr)\\
		& = \cF^{-1} \Bigl(\ee^{-\int\limits_s^t \widetilde{\fb}(\tau,\cdot) \d \tau}  \cF \cF^{-1}\Bigl(\ee^{-\int\limits_s^t \widetilde{\fa}(\tau,\cdot)\d \tau} \cF f\Bigr)\Bigr) 
		= \widetilde{V}_p(t,s) \widetilde{U}_p(t,s) f.
	\end{align*}
	
	By \cite[Proposition~3.2]{BombachGST-20}, we infer that there exists $K_{m,d}\geq 0$, depending only on $m$ and $d$, such that, for all $\lambda>0$, all $f\in  \Ell^p(\R^d)$, and $0\leq s\leq t \leq T$, we have
	\[
	\norm{(\id-P_\lambda)V_p(t,s) f}_{\Ell^p(\R^d)} \leq K_{m,d}\, \ee^{-2^{-m-3}(t-s) \lambda^m} \norm{f}_{\Ell^p(\R^d)}.
	\]
	Thus, we also conclude
	\[
	\norm{(\id-P_\lambda)\widetilde{V}_p(t,s) f}_{\Ell^p(\R^d)} \leq K_{m,d}\, \ee^{-2^{-m-3}c/2 (t-s) \lambda^m} \norm{f}_{\Ell^p(\R^d)}.
	\]
        Moreover, by Theorem \ref{thm:evoFamForElliptic} there exist $\widetilde M \geq 1$ and $\widetilde \omega \in \R$, depending on $\widetilde{\fa}$ and therefore on $\fa$, such that $\norm{\widetilde{U}_p(t,s)}_{\cL(\Ell^p(\R^d))} \leq \widetilde M \ee^{\widetilde \omega(t-s)}$ for all $0\leq s\leq t \leq T$. Note that we can choose $\widetilde{\omega}=\omega$, where $\omega$ is an exponential growth rate for $(U_p(t,s))_{0\leq s\leq t\leq T}$ (by choosing the same $c_0$ in Lemma \ref{lem:real_part_bound} and inspecting the proof of Lemma \ref{lem:kernel_bound}). Thus, for $\lambda>\lambda^* \coloneqq (2^{m+5}\max\{\omega,0\}/c)^{1/m}$, $f\in\Ell^p(\R^d)$,
        and $0\leq s\leq t\leq T$, we arrive at
	\begin{align}
	\begin{split}
		\norm{(\id-P_\lambda)U_p(t,s) f}_{\Ell^p(\R^d)} 
    & = \norm{(\id-P_\lambda)\widetilde{V}_p(t,s)\widetilde{U}_p(t,s) f}_{\Ell^p(\R^d)} \\
		&\leq K_{m,d}\, \ee^{-2^{-m-3}c/2(t-s)\lambda^m}  \widetilde{M}\ee^{\omega (t-s)} \norm{f}_{\Ell^p(\R^d)} \\
		&\leq K_{m,d}\,\widetilde{M} \ee^{-(t-s)2^{-m-5} c \lambda^m} \norm{f}_{\Ell^p(\R^d)} .
    \end{split}
    \label{eq:Dissipation-elliptic}
	\end{align}
	
        Let $L \in (0,\infty)^d$ and $\rho > 0$ such that $(\Omega(t))_{t \in [0,T]}$ is uniformly $(L,\rho)$-thick, $f\in\Ell^p(\R^d)$, and $\lambda >0$. Since $\supp \cF P_\lambda f \subseteq [-\lambda, \lambda]^d$, the Logvinenko--Sereda theorem \cite[Theorem~3]{Kovrijkine-01} implies
	\begin{align}
		\|P_\lambda f\|_{\Ell^p(\R^d)} 
		\leq \ee^{-K d \ln(\rho/K^d)} \ee^{-2 K |L|_1 \ln\left( \rho /K^d \right) \lambda} \|(P_\lambda f)|_{\thickset(t)}\|_{\Ell^p(\thickset(t))}
		\label{eq:d0d1logvinenkoSereda}    
	\end{align}
	for all $t\in [0,T]$, where $K\geq0$ is a universal constant.
        By  \eqref{eq:Dissipation-elliptic}, \eqref{eq:d0d1logvinenkoSereda}, Theorem~\ref{thm:evoFamForElliptic}\eqref{it:expEvoFam}, and Lemma~\ref{lem:measurableNorm}, we conclude that Hypothesis \ref{hypo:evoFam} is satisfied with $Y = \Ell^p(\R^d)$ and $C(t)$ the restriction operator on $\Omega(t)$ for $t \in [0,T]$.
        Therefore, Theorem~\ref{thm:observability} yields the assertion.
\end{proof}

The following theorem will show a partial converse of Theorem~\ref{thm:obs}, namely that a final-state observability estimate implies that the family $(\thickset(t))_{t\in [0,T]}$ is mean thick. 
For the pure Laplacian on $\Ell^2(\R^d)$ and time-independent set of observability, such a result has first been shown in \cite{EgidiV-18,WangWZZ-19}. 
In the autonomous case, this has been generalized to strongly elliptic operators in $\Ell^p(\R^d)$ in \cite[Theorem~3.3]{GallaunST-20}. We also refer to \cite[Theorem~5]{BeauchardEP-S-20} where a similar result is shown for the non-autonomous Ornstein--Uhlenbeck equation.

\begin{thm}
  \label{thm:obsImpliesMeanThick}
	Let $\fa$ be a uniformly strongly elliptic polynomial of degree $m \geq 2$ with coefficients $a_\alpha\in \Ell^\infty(0,T)$ for $\abs{\alpha}\leq m$. 
    Let $(\thickset(t))_{t \in [0,T]}$ be such that $\thickset(t) \subseteq \R^d$ is measurable for all $t \in [0,T]$ and $[0,T]\times\R^d\ni(t,x)\mapsto\1_{\thickset(t)}(x)$ is measurable. Let $(U (t,s))_{0\leq s\leq t \leq T}$ be as in \eqref{eq:defnEvoFam}.
        Let $p, r\in[1,\infty)$, and assume there exists $C_\obs \geq 0$ such that, for all $u_0 \in \Ell^p(\R^d)$, we have
    \[\|U_p(T,0) u_0 \|_{\Ell^p(\R^d)} \leq C_\obs
                                            \Bigl(\int_{0}^T \norm{U_p(t,0)u_0|_{\thickset(t)}}_{\Ell^p(\thickset(t))}^r \d t\Bigr)^{1/r}.\]
	Then the family $(\thickset(t))_{t\in[0,T]}$ is mean thick.
\end{thm}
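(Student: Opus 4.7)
The plan is to argue by contrapositive: we assume that $(\Omega(t))_{t\in[0,T]}$ is not mean thick and construct a family of Schwartz test functions that will falsify the observability estimate. By definition, non-mean-thickness means that, for every $L\in(0,\infty)^d$ and every $\rho>0$, there exists some $x\in\R^d$ with $\int_0^T|\Omega(t)\cap(x+\bigtimes_{i=1}^d(0,L_i))|\,\mathrm{d}t<T\rho\prod_i L_i$. The structural fact to exploit is that, for every $t\in[0,T]$, $U_p(t,0)$ acts on $\Ell^p(\R^d)$ as convolution with the Schwartz kernel $p_{t,0}$ (and as the identity at $t=0$), so it commutes with translations; applied to a translated test function $\phi(\cdot-x)$, this will leave the global $\Ell^p$-norm invariant while shifting the effective observation window by $-x$.

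We fix a nontrivial $\phi\in\cS(\R^d)$ with $\cF\phi$ compactly supported. Since $\cF U_p(T,0)\phi=\ee^{-\int_0^T\fa(\tau,\cdot)\,\mathrm{d}\tau}\cF\phi$ and the exponential factor never vanishes, $\kappa\coloneqq\|U_p(T,0)\phi\|_{\Ell^p(\R^d)}>0$. The crucial analytic input will be uniform-in-$t$ Schwartz decay of $U_p(t,0)\phi$ on $[0,T]$: because $\cF\phi$ has compact support and the coefficients $a_\alpha$ lie in $\Ell^\infty(0,T)$, every $\xi$-derivative of $\ee^{-\int_0^t\fa(\tau,\cdot)\,\mathrm{d}\tau}\cF\phi$ is supported in $\supp\cF\phi$ and bounded in $\Ell^1(\R^d)$ uniformly in $t\in[0,T]$. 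Fourier inversion then yields constants $C_N\geq0$ with $|U_p(t,0)\phi(x)|\leq C_N(1+|x|)^{-N}$ for all $x\in\R^d$, $t\in[0,T]$, $N\in\N$. In particular, $M\coloneqq\sup_{t\in[0,T]}\|U_p(t,0)\phi\|_{\Ell^\infty(\R^d)}<\infty$ and $\eta(R)\coloneqq\sup_{t\in[0,T]}\|\1_{\R^d\setminus(-R,R)^d}\,U_p(t,0)\phi\|_{\Ell^p(\R^d)}$ tends to $0$ as $R\to\infty$.

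For each $R>0$ and $\rho>0$, non-mean-thickness applied with the cube of sidelength $2R$ and the chosen $\rho$ produces (after an innocuous shift of centre) some $y_{R,\rho}\in\R^d$ satisfying $\int_0^T|(\Omega(t)-y_{R,\rho})\cap(-R,R)^d|\,\mathrm{d}t<T\rho(2R)^d$. Inserting $u_{R,\rho}\coloneqq\phi(\cdot-y_{R,\rho})$ into the observability inequality, translation invariance preserves $\|U_p(T,0)u_{R,\rho}\|_{\Ell^p(\R^d)}=\kappa$, and splitting the integration region into $(-R,R)^d$ and its complement yields
\begin{align*}
\|U_p(t,0)u_{R,\rho}|_{\Omega(t)}\|_{\Ell^p(\Omega(t))}^p \leq M^p\,|(\Omega(t)-y_{R,\rho})\cap(-R,R)^d| + \eta(R)^p.
\end{align*}
Raising to the power $r/p$, integrating in $t$, and using a Chebyshev-type dichotomy on $\{t\in[0,T]\colon |(\Omega(t)-y_{R,\rho})\cap(-R,R)^d|>\sqrt{\rho}(2R)^d\}$ to control the first summand, we arrive at an estimate of the form
\begin{align*}
\kappa \leq C_{\mathrm{obs}}\bigl(g_R(\rho) + c\,T^{1/r}\eta(R)\bigr),
\end{align*}
where $c$ depends only on $r,p,M$ and $g_R(\rho)\to 0$ as $\rho\to 0$ for each fixed $R$. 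Letting first $\rho\to 0$ and then $R\to\infty$ forces $\kappa\leq 0$, contradicting $\kappa>0$.

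The main technical obstacle will be establishing the uniform-in-$t$ Schwartz decay of $U_p(t,0)\phi$: for each fixed $t$ the function is Schwartz, but non-autonomy forces control of all seminorms uniformly in $t$, which will be achieved via the compact support of $\cF\phi$ together with the $\Ell^\infty$-hypothesis on the coefficients $a_\alpha$. The remainder is a moving-window adaptation of the autonomous contradiction argument from \cite[Theorem~3.3]{GallaunST-20}, with the Chebyshev dichotomy in the time variable being the new, routine ingredient needed to accommodate the time-dependent observation set $\Omega(t)$ and the exponent $r/p$.
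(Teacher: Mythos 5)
Your proposal is correct, and its overall strategy coincides with the paper's: argue by contrapositive, feed translated Schwartz functions into the observability inequality, and split the observation term into a bulk part controlled by the measure of $\thickset(t)$ near the translate and a tail part that is small uniformly in $t$. The technical implementation differs in two respects. First, where the paper invokes the Gaussian kernel bounds of Lemma~\ref{lem:kernel_bound} (so that $U_p(t,0)f_n=p_{t,0}\ast f(\cdot-x_n)$ is controlled via H\"older and Young through $\norm{p_{t,0}}_{\Ell^{p'}}$ for the bulk and through the explicit spatial decay of $p_{t,0}$ for the tail), you take a band-limited $\phi$ and obtain uniform-in-$t$ Schwartz decay of $U_p(t,0)\phi$ directly on the Fourier side from the compact support of $\cF\phi$, the $\Ell^\infty$ bound on the coefficients, and Lemma~\ref{lem:real_part_bound}; this is a sound alternative that avoids the appendix kernel estimates entirely, at the price of restricting to band-limited test functions (which is harmless, since one nontrivial test function suffices). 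Second, the paper extracts from non-mean-thickness a sequence $(x_n)$ for which $\frac1T\int_0^T\lvert\thickset(t)\cap\BB(x_n,n)\rvert^{r/p}\,\d t<1/n$, with the exponent $r/p$ already built in -- a step that is asserted rather than derived -- whereas your explicit Chebyshev dichotomy in $t$ (splitting off the set where $\lvert(\thickset(t)-y)\cap(-R,R)^d\rvert$ exceeds $\sqrt{\rho}\,(2R)^d$ and using the trivial bound $(2R)^d$ there) makes the passage from the $\Ell^1_t$ smallness to the $\Ell^{r/p}_t$ smallness fully rigorous for all $r,p\in[1,\infty)$; this is a genuine, if minor, improvement in transparency. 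The iterated limit $\rho\to 0$ then $R\to\infty$ plays the role of the paper's single diagonal sequence. All the individual steps you outline (positivity of $\kappa$, translation invariance, the uniform Fourier estimates, the measurability of $t\mapsto\lvert(\thickset(t)-y)\cap(-R,R)^d\rvert$ via Fubini) go through as claimed.
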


\begin{proof}
  Our proof is inspired by \cite{EgidiV-18,WangWZZ-19,BeauchardEP-S-20}.
  We will show the contrapositive: assume that the family $(\thickset(t))_{t\in[0,T]}$ is not mean thick.
    Then there exists a sequence $(x_n)_{n \in \N}$ in $\R^d$ such that, for all $n \in \N$, we have
  \begin{align}
    \label{eq:notMeanThick}
      \frac{1}{T} \int_0^T 
      |\thickset(t) \cap \BB(x_n, n) |^{r/p} \d t
        < \frac{1}{n} .
  \end{align}
  Let $f\in\cS(\R^d)$, $\norm{f}_{\Ell^p(\R^d)} = 1$, and set $f_n \coloneqq f(\cdot-x_n)$ for $n\in\N$.
  Let $t \in (0,T)$ and $n\in\N$. Then $U_p(t,0)f_n = p_{t,0}\ast f_n = p_{t,0}\ast f(\cdot-x_n)$. Moreover,
  \begin{align}
    \label{eq:thickU}
    \begin{split}
    & \|(U_p(t,0) f_n)|_{\thickset(t)} \|_{\Ell^p(\thickset(t))}^p
    = \| \one_{\thickset(t)} U_p(t,0) f_n\|_{\Ell^p(\R^d)}^p\\
    & =\| \one_{\thickset(t)} p_{t,0}\ast f(\cdot - x_n) \|_{\Ell^p(\R^d)}^p 
    =\| \one_{\thickset(t) - x_n} p_{t,0}\ast f \|_{\Ell^p(\R^d)}^p \\
    & = \| \one_{(\thickset(t) - x_n) \cap \BB(0,n)} p_{t,0}\ast f \|_{\Ell^p(\R^d)}^p
    + \| \one_{(\thickset(t) - x_n)} (1 - \one_{\BB(0,n)}) p_{t,0} \ast f \|_{\Ell^p(\R^d)}^p .
    \end{split}
  \end{align}
  
  We first estimate the first summand on the right-hand side of \eqref{eq:thickU}. 
  As a consequence of Lemma \ref{lem:kernel_bound}, there exists $C\geq 0$ such that $\norm{p_{t,0}}_{\Ell^{p'}(\R^d)}^{p}\leq C$ for all $t\in (0,T)$, where $\frac{1}{p'} + \frac{1}{p} = 1$. 
  By H\"older's and Young's inequality, we estimate
  \begin{align*}
    \begin{split}
    \| \one_{(\thickset(t) - x_n) \cap \BB(0,n)} p_{t,0}\ast f \|_{\Ell^p(\R^d)}^p
    &\leq |(\thickset(t) - x_n) \cap \BB(0,n)| \, \norm{p_{t,0}}_{\Ell^{p'}(\R^d)}^{p}\\
    &\leq  C |\thickset(t) \cap \BB(x_n,n)|.
    \end{split}
  \end{align*}
 
    For the second summand on the right-hand side of \eqref{eq:thickU}, we have by Lemma \ref{lem:kernel_bound}, H\"older's inequality, and Fubini--Tonelli's theorem
  \begin{align*}
    &\| \one_{(\thickset(t) - x_n)} (1 - \one_{\BB(0,n)}) p_{t,0}\ast f \|_{\Ell^p(\R^d)}^p
    \leq 
    \| (1 - \one_{\BB(0,n)}) p_{t, 0}\ast f \|_{\Ell^p(\R^d)}^p\\
    & \leq \int_{\complement \BB(0,n)} \! \Bigl(\int_{\R^d} C_1\ee^{\omega t} \ee^{-C_2 \abs{z}^{m/(m-1)}} \abs{f(x-t^{1/m}z)} \d z\Bigr)^p \d x \\
    & = C_1^p\ee^{p\omega_+ T} \! \int_{\complement \BB(0,n)} \! \Bigl( \int_{\R^d} \! \ee^{-C_2 \abs{z}^{m/(m-1)}}\d z\Bigr)^{p/p'} \!\! \int_{\R^d} \ee^{-C_2 \abs{z}^{m/(m-1)}} \abs{f(x-t^{1/m}z)}^p \d z \d x \\
    & \leq C_1^p\ee^{p\omega_+ T} \Bigl( \int_{\R^d} \ee^{-C_2 \abs{z}^{m/(m-1)}}\d z\Bigr)^{p/p'} \!\! \int_{\R^d} \int_{\complement \BB(0,n)}  \!\!\!\!\! \ee^{-C_2 \abs{z}^{m/(m-1)}}  \abs{f(x-t^{1/m}z)}^p\d x \d z.
  \end{align*}
  Let us focus on estimating the double integral over $\R^d \times \complement \BB(0,n)$ in the previous calculation by splitting it up. 
  To this end, let $\varepsilon>0$. 
  Then there exist $n_0\in\N$ and $R>0$ such that 
  \begin{equation*}\int_{\complement \BB(0,n_0)} \abs{f(y)}^p\d y \leq \varepsilon
    \quad\text{and}\quad 
    \int_{\complement \BB(0,R)} \ee^{-C_2 \abs{z}^{m/(m-1)}}\d z \leq \varepsilon \;.
  \end{equation*}
  Consequently, for $n\geq n_0 + T^{1/m} R$, we have $\complement \BB(0,n) - t^{1/m} \BB(0,R) \subseteq \complement \BB(0,n_0)$ and
  \begin{align*}
    \int_{\R^d} \int_{\complement \BB(0,n)} &\ee^{-C_2 \abs{z}^{m/(m-1)}}  \abs{f(x-t^{1/m}z)}^p\d x \d z \\
    & = \int_{\BB(0,R)} \int_{\complement \BB(0,n)} \ee^{-C_2 \abs{z}^{m/(m-1)}}  \abs{f(x-t^{1/m}z)}^p\d x \d z \\
    &\qquad + \int_{\complement \BB(0,R)} \int_{\complement \BB(0,n)} \ee^{-C_2 \abs{z}^{m/(m-1)}}  \abs{f(x-t^{1/m}z)}^p\d x \d z \\
    & \leq \varepsilon \int_{\BB(0,R)} \ee^{-C_2 \abs{z}^{m/(m-1)}}\d z + \varepsilon \norm{f}_{\Ell^p(\R^d)}^p.
  \end{align*}
  Thus,
  \begin{align}
    \sup_{t\in [0,T]} \| \one_{(\thickset(t) - x_n)} (1 - \one_{\BB(0,n)}) p_{t,0}\ast f \|_{\Ell^p(\R^d)}^p \to 0 
    \label{eq:SecondTermToZero}
    \end{align}
  as $n$ tends to $\infty$. By \eqref{eq:notMeanThick}--\eqref{eq:SecondTermToZero} we obtain
  \begin{align*}
    \int_0^T \| ( U_p(t,0) f_n)|_{\thickset(t)} \|_{\Ell^p(\thickset(t))}^r \d t & \to 0 
  \end{align*}
  as $n$ tends to $\infty$.
  Since $\norm{U_p(T,0)f_n}_{\Ell^p(\R^d)} = \norm{p_{T,0}\ast f}_{\Ell^p(\R^d)} > 0$ for all $n\in\N$, an observability estimate does not hold.
\end{proof}

\begin{rem}
  \begin{enumerate}[(a)]
    \item
    Combining Theorem~\ref{thm:obs} and Theorem~\ref{thm:obsImpliesMeanThick}, we observe that uniformly thick observability sets allow for a final-state observability estimate, while such an estimate only implies that the observation sets are mean thick. It is an interesting question whether it is possible to close this gap, either by finding a suitable condition on the observation sets which is equivalent to a final-state observability estimate, or by proving that an observability estimate holds for mean thick sets. Even in the setting of Hilbert spaces and for autonomous problems, i.e.\ $p=r=2$ and $A(\cdot)$ time-independent, an answer on this question is still open. However, for 
    a certain class of non-autonomous diffusive evolution equations governed by the Ornstein--Uhlenbeck operator it has recently been proven in \cite{AlphonsoM-22} that the corresponding equation is cost-uniform approximate null-controllable, if and only if the family $(\Omega (t))_{t \in [0,T]}$ is mean thick. Here, cost-uniform approximate null-controllable is meant in the sense of property (\ref{item:null-non-unif}) in the introduction, and is thus a weaker property than the observability estimate in the above theorems. In fact, if $r=p=2$, then cost-uniform approximate null-controllability is equivalent to a so-called weak observability estimate, cf.\ \cite[Corollary~7.2]{AlphonsoM-22}.
    \item
    If the family of sets $(\thickset(t))_{t\in[0,T]}$ does not depend on $t$, then uniform thickness is equivalent to mean thickness. 
    In this case, one can prove the statement of Theorem~\ref{thm:obsImpliesMeanThick} for $r=\infty$ as well.
  \end{enumerate}
\end{rem}

\section{Observability for Non-autonomous Ornstein--Uhlenbeck Equations}\label{sec:OU}

This section applies the results from Section~\ref{sec:obs} to evolution families associated with non-autonomous Ornstein--Uhlenbeck equations.

\subsection{Non-autonomous Ornstein--Uhlenbeck Operators}

Let $p, r \in [1, \infty]$ and $T>0$. 
We consider \emph{non-autonomous Ornstein--Uhlenbeck equations} of the form
\begin{align}\label{eq:OrnsteinUhlenbeck}
\begin{split}
\dot{u}(t) - P(t)u(t) &= 0, \quad t \in (0,T], \quad u(0) = u_0 \in \Ell^p(\R^d)
\end{split}
\end{align}
with the \emph{non-autonomous Ornstein--Uhlenbeck operator} $P(t)$ given by
\begin{equation}\label{eq:OU_P}
P(t) \coloneqq \frac{1}{2}\trace(A(t)A(t)^\trans\nabla_x^2) - \scalarproduct{B(t)x}{\nabla_x} - \frac{1}{2}\trace(B(t))\,,
\end{equation}
where $A,B \in \CC^\infty((0,T);\R^{d \times d})$.

In general, the first-order term of these operators has coefficients that are allowed to vary in space.
Furthermore, they are not elliptic, so the theory of Section~\ref{sec:elliptic} does not apply. 
However, we will employ the following \emph{generalized Kalman rank condition} considered in~\cite{BeauchardEP-S-20} as a substitute for ellipticity.
More precisely, for $k \in \Z_+$ and $t \in [0,T]$, define $\widetilde A_k(t)$ by induction via the identities
\begin{equation*}
	\widetilde A_0(t) \coloneqq A(T -t), \quad \widetilde A_{k+1}(t) \coloneqq \frac{\mathrm{d}}{\mathrm{d} t}\widetilde A_k(t) + B(T -t)\widetilde A_k(t)\,.
\end{equation*}
We say that \emph{the generalized Kalman rank condition holds at time $T$} if
\begin{equation*}
	\spann~ \Big\{\widetilde A_k(T)x:~ x \in \R^d,k \in \Z_+\Big\} = \R^d\,.
\end{equation*}
It was shown in \cite[Section 6]{BeauchardEP-S-20} that, if the generalized Kalman rank condition holds at time $T$, then the problem \eqref{eq:OrnsteinUhlenbeck} admits a unique \emph{weak solution} $u \in \CC(0,T;\Ell^2(\R^d))$ and gives rise to an evolution family $(U_2(t,s))_{0 \leq s \leq t \leq T}$ on $\Ell^2(\R^d)$ defined by
\begin{equation}\label{eq:OU_Fourier}
	U_2(t,s)f \coloneqq \cF^{-1} \Big(\ee^{\frac{1}{2} \int\limits_s^t\trace{B(\tau)}\,\d \tau - \frac{1}{2}\int\limits_s^t \abs{A(\tau)^\trans R(t,\tau)^\trans\cdot}^2\,\d \tau}(\mathcal{F}f)(R(t,s)^\trans\cdot) \Big) ,
\end{equation}
for all $f \in \Ell^2(\R^d)$,
where, $(R(t,s))_{0\leq s\leq t\leq T}$ is the unique family of $d\times d$-matrices such that, for all $s,t \in [0,T]$, we have
\begin{equation*}
	\partial_t R(t,s) = B(t)R(t,s), \quad R(s,s) = \id_{\R^d}\,.
\end{equation*}
We recall that, for all $r,s,t \in [0,T]$, it holds that
\begin{equation}\label{eq:matrix group}
	R(t,s)R(s,r) = R(t,r)\,,
\end{equation}
see, e.g.,~\cite[Proposition~15]{Coron}.
For details on the above concept of weak solution, we refer the reader to~\cite[Section~6]{BeauchardEP-S-20} and \cite[Appendix~A.1]{BeauchardP-S}.

In the following, we will show that the evolution family $(U_2(t,s))_{0 \leq t \leq T}$, when restricted to $\Ell^p(\R^d) \cap \Ell^2(\R^d)$, can be extended to an evolution family on $\Ell^p(\R^d)$.
As a first step, we rewrite the exponent in~\eqref{eq:OU_Fourier} through a suitable quadratic form $q_{t,s}$.
Let $0 \leq s \leq t \leq T$ and recall that, in \cite[Proposition~15, Equation~(80)]{BeauchardEP-S-20}, the authors proved that there exist constants $c, \widetilde \varepsilon > 0$ and $m_1 \in \N$ such that, for all $\xi \in \R^d$ and $0 \leq s \leq t \leq \widetilde \varepsilon$, it holds that
\begin{equation}\label{eq:beps1}
	\int_s^t \abs{A(\tau)^\trans R(t,\tau)^\trans\xi}^2\d\tau \geq c\, (t - s)^{m_1}\abs{\xi}^2.
\end{equation}
Defining the matrix
\begin{equation*}
	Q_{t,s} \coloneqq \int_s^t R(s,\tau)A(\tau)A(\tau)^\trans R(s,\tau)^\trans\d \tau,
\end{equation*}
we have
\begin{align*}
	\int_s^t \abs{A(\tau)^\trans R(t,\tau)^\trans\xi}^2\d \tau 
  &= \int_s^t \scalarproduct{R(t,\tau)A(\tau)A(\tau)^\trans R(t,\tau)^\trans\xi}{\xi}\d \tau \\ 
	&=\scalarproduct{Q_{t,s}R(t,s)^\trans\xi}{R(t,s)^\trans \xi}
  \eqqcolon q_{t,s}(\xi),
\end{align*}
where the latest identity is due to the transposed version of \eqref{eq:matrix group}
\begin{equation*}
	R(s,r)^\trans R(t,s)^\trans = R(t,r)^\trans .
\end{equation*}
It follows from \eqref{eq:beps1} that $q_{t,s}$ is a positive definite quadratic form for $0 \leq  s \leq t \leq \widetilde \varepsilon$, and we may rewrite~\eqref{eq:OU_Fourier} via
\begin{equation}\label{eq:OU_Fourier_form}
	U_2(t,s)f \coloneqq \cF^{-1} \Big(\ee^{\frac{1}{2} \int\limits_s^t\trace{B(\tau)}\,\d \tau} \ee^{- \frac{q_{t,s}}{2}}(\mathcal{F}f)(R(t,s)^\trans\cdot) \Big), \quad f \in \Ell^2(\R^d).
\end{equation}

To prove the desired $\Ell^p$-estimate for the operator $U_2(t,s)$, we employ the following lemma.

\begin{lem}\label{lem:Aqlambda}
	Let $p \in [1,\infty]$ and $\Lambda, Q \in \R^{d \times d}$ such that $\det(\Lambda) \neq 0$ and the quadratic form
	\begin{equation*}
		q\from \R^d \to \R,\quad \xi \mapsto \scalarproduct{Q\Lambda^\trans\xi}{\Lambda^\trans\xi}
	\end{equation*}
	is positive definite. Consider the operator $A_{q,\Lambda}\from \mathcal{S}(\R^d)\to \mathcal{S}(\R^d)$
	given by
	\begin{equation*}
		A_{q, \Lambda} f \coloneqq \mathcal{F}^{-1} \Big(\ee^{-q/2}(\mathcal{F}f)(\Lambda^\trans\cdot) \Big)\,.
	\end{equation*}
	Then, for all $p \in [1,\infty]$ and all $f \in \mathcal{S}(\R^d)$, it holds that
	\begin{equation*}
		\norm{A_{q,\Lambda} f}_{\Ell^p(\R^d)} \leq \abs{\det(\Lambda)}^{-1/p'}\norm{f}_{\Ell^p(\R^d)}\,,
	\end{equation*}
	where $1/p + 1/p' = 1$.
\end{lem}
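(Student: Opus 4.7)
The plan is to factor $A_{q,\Lambda}$ into (i) a dilation by $\Lambda^{-1}$ and (ii) a convolution with an $\Ell^1$-normalized Gaussian, and then apply Young's convolution inequality combined with the change-of-variables formula.

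First, I would use the scaling rule for the Fourier transform to absorb the factor $(\cF f)(\Lambda^\top \cdot)$ into a dilation of $f$. With the sign convention $(\cF u)(\xi) = \int \ee^{-\ii x\cdot\xi} u(x)\d x$, a direct substitution gives
\begin{equation*}
 \cF[f(\Lambda^{-1}\cdot)](\xi) = \abs{\det \Lambda}\, (\cF f)(\Lambda^\top\xi),
\end{equation*}
so that
\begin{equation*}
 A_{q,\Lambda} f = \abs{\det\Lambda}^{-1}\, \cF^{-1}\bigl(\ee^{-q/2}\bigr) \ast \bigl(f\circ \Lambda^{-1}\bigr).
\end{equation*}

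Next, I would identify the convolution kernel. Writing $q(\xi) = \scalarproduct{\tilde Q \xi}{\xi}$ with $\tilde Q \coloneqq \Lambda Q \Lambda^\top$, the positivity assumption on $q$ gives $\tilde Q$ symmetric positive definite. The standard Gaussian Fourier identity then yields
\begin{equation*}
 \cF^{-1}\bigl(\ee^{-q/2}\bigr)(x) = \frac{1}{(2\pi)^{d/2}\sqrt{\det \tilde Q}}\, \ee^{-\frac{1}{2}\scalarproduct{\tilde Q^{-1} x}{x}},
\end{equation*}
which is a probability density and thus has $\Ell^1$-norm equal to $1$.

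With these two ingredients, Young's inequality $\norm{g\ast h}_{\Ell^p} \le \norm{g}_{\Ell^1}\norm{h}_{\Ell^p}$ together with the dilation identity $\norm{f\circ \Lambda^{-1}}_{\Ell^p} = \abs{\det\Lambda}^{1/p} \norm{f}_{\Ell^p}$ (valid also for $p=\infty$, where the dilation leaves the sup-norm invariant) gives
\begin{equation*}
 \norm{A_{q,\Lambda}f}_{\Ell^p(\R^d)} \le \abs{\det\Lambda}^{-1}\cdot 1 \cdot \abs{\det\Lambda}^{1/p}\norm{f}_{\Ell^p(\R^d)} = \abs{\det\Lambda}^{-1/p'}\norm{f}_{\Ell^p(\R^d)},
\end{equation*}
since $\tfrac{1}{p}-1 = -\tfrac{1}{p'}$. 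The cases $p=1$ and $p=\infty$ are checked in the same way with the conventions $1/p' = 0$ and $1/p'=1$, respectively.

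I do not expect a genuine obstacle here: the only potentially delicate point is bookkeeping of the determinant factors coming from the chosen Fourier normalization, since the factor $\abs{\det\Lambda}$ arising from the dilation rule must exactly compensate the corresponding factor from changing variables in $\norm{f\circ\Lambda^{-1}}_{\Ell^p}$ to produce the sharp exponent $-1/p'$.
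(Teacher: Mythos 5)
Your proof is correct and follows essentially the same route as the paper's: both factor $A_{q,\Lambda}$ into a dilation by $\Lambda^{-1}$ (picking up the $\abs{\det\Lambda}^{1/p-1}$ factor) composed with convolution against the Gaussian $\cF^{-1}\ee^{-q/2}$, whose $\Ell^1$-norm equals $1$ by the standard Gaussian Fourier identity, and then conclude by Young's inequality. The only cosmetic difference is that the paper writes the quadratic form as $q=\ska{M\cdot,\cdot}$ with $M$ the symmetric positive definite representative rather than your $\tilde Q=\Lambda Q\Lambda^\trans$ (which need not be symmetric if $Q$ is not); this changes nothing of substance.
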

\begin{proof}
	Clearly,
	\begin{equation*}
		A_{q,\Lambda} = \mathcal{F}^{-1}\ee^{-q/2}\mathcal{F}\mathcal{F}^{-1}(\mathcal{F}f)(\Lambda^\trans\cdot) =  \mathcal{F}^{-1}\ee^{-q/2}\mathcal{F}\frac{1}{\abs{\det(\Lambda)}}f( \Lambda^{-1}\cdot)\,.
	\end{equation*}
	Since, by substitution for $p<\infty$ and directly for $p=\infty$,
	\begin{equation*}
		\left \| \frac{1}{\det(\Lambda)}f( \Lambda^{-1}\cdot) \right \|_{\Ell^p(\R^d)} =\abs{\det(\Lambda)}^{1/p - 1} \norm{f}_{\Ell^p(\R^d)},
	\end{equation*}
	we only need to prove that
	\begin{equation*}
		\norm{\mathcal{F}^{-1}\ee^{-q/2}\mathcal{F}f}_{\Ell^p(\R^d)} \leq \norm{f}_{\Ell^p(\R^d)}\,.
	\end{equation*}
	By Young's inequality, it suffices to show that
	\begin{equation*}
		\norm{\mathcal{F}^{-1}\ee^{-q/2}}_{\Ell^1(\R^d)} \leq 1\,.
	\end{equation*}
	 Let $M$ be the symmetric positive definite matrix such that $q = \ska{M\cdot,\cdot}$ and set $\overline q \coloneqq \ska{M^{-1}\cdot,\cdot}$. It is well-known that 
	\begin{equation*}
		\mathcal{F}^{-1}\ee^{-q/2} = \frac{\ee^{-\overline q/2}}{(2\pi)^{d/2}\det(M)^{1/2}}\,;
	\end{equation*}
 see, for example, \cite[Theorem~7.6.1]{Hoermander90}.
	Again, by substitution, it follows that
	\begin{equation*}
		\norm{\mathcal{F}^{-1}\ee^{-q/2}}_{\Ell^1(\R^d)} = \frac{1}{(2\pi)^{d/2}}\int_{\R^d} \ee^{-\abs{y}^2/2}\d y = 1\, ,
	\end{equation*}
	which yields the assertion.
\end{proof}

Note that, from \emph{Liouville's formula}
\begin{equation}\label{eq:liouville}
	\det(R(t,s)) = \ee^{\int_s^t\trace{B(\tau)}\d \tau},
\end{equation}
cf.~\cite[Lemma~3.11]{Teschl}, it follows that $R(t,s)$ is invertible and, thus, we may rewrite identity~\eqref{eq:OU_Fourier_form} using the operator $A_{q_{t,s}, R(t,s)}$ from Lemma~\ref{lem:Aqlambda} as 
\begin{equation}\label{eq:OU_Fourier_Aqlambda}
	U_2(t,s)f = \ee^{\frac{1}{2} \int\limits_s^t\trace{B(\tau)}\d \tau}A_{q_{t,s}, R(t,s)}f, \quad f \in \mathcal{S}(\R^d).
\end{equation}
Furthermore, formula~\eqref{eq:liouville} and Lemma~\ref{lem:Aqlambda} give the estimate
\begin{equation*}
	\norm{U_2(t,s)f}_{\Ell^p(\R^d)} \leq \ee^{\left(\frac{1}{2} - \frac{1}{p'}\right)\int\limits_s^t\trace{B(\tau)}\d \tau} \norm{f}_{\Ell^p(\R^d)}\,.
\end{equation*}
Therefore, $U_2(t,s)$ extends to a bounded operator $U_p(t,s)\from \Ell^p(\R^d) \to \Ell^p(\R^d)$ with norm estimate
\begin{equation}\label{eq:OU_Norm}
		\norm{U_p(t,s)}_{\cL(\Ell^p(\R^d))} \leq \ee^{\left(\frac{1}{2} - \frac{1}{p'}\right)\int\limits_s^t\trace{B(\tau)}\d \tau}.
\end{equation}

\subsection{Observability}

Using a dissipation estimate from \cite{BeauchardEP-S-20}, we prove the following observability estimate for small final times $T$.
\begin{thm}\label{thm:OU-obs}
	Let $p \in (1,\infty)$ and $\widetilde{T} > 0$ such that the generalized Kalman rank condition holds at time~$\widetilde{T}$. Then there exists a constant $\widetilde \varepsilon \in (0,\widetilde{T}]$ such that, if $T\in [0,\widetilde{\varepsilon}\,]$, $(\thickset (t))_{t \in [0, T]}$ uniformly thick on $[0, T]$, $E \subseteq [0,T]$ measurable with positive Lebesgue measure, and $r\in [1,\infty]$, then there exists $C_{\mathrm{obs}}\geq 0$ such that, for all $u_0\in \Ell^p(\R^d)$, we have
	\[\norm{U_p(T,0)u_0}_{\Ell^p(\R^d)} \leq C_{\mathrm{obs}}\begin{cases}
                                            \Bigl(\int_{E} \norm{(U_p(t,0)u_0)|_{\thickset(t)}}_{\Ell^p(\thickset(t))}^r \d t\Bigr)^{1/r}, & r\in [1,\infty),\\
                                            \esssup_{t\in E} \norm{(U_p(t,0)u_0)|_{\thickset(t)}}_{\Ell^p(\thickset(t))}, & r=\infty,
	                                     \end{cases}\]
	where $(U_p(t,s))_{0 \leq s \leq t \leq T}$ is the evolution family defined in~\eqref{eq:OU_Fourier_Aqlambda}.
\end{thm}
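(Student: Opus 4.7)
The strategy is to invoke the abstract Theorem~\ref{thm:observability} with $X=Y=\Ell^p(\R^d)$, evolution family $(U_p(t,s))_{0\leq s\leq t\leq T}$, and observation operator $C(t)f = f|_{\thickset(t)}$. Exponential boundedness is already provided by~\eqref{eq:OU_Norm}, and measurability of $t\mapsto \norm{C(t)U_p(t,0)u_0}_{\Ell^p(\thickset(t))}$ follows by the same reasoning as in Lemma~\ref{lem:measurableNorm}, once we note that $(U_p(t,s))$ is strongly continuous on $\Ell^p(\R^d)$ for $p\in(1,\infty)$ (which can be read off the kernel representation~\eqref{eq:OU_Fourier_Aqlambda} together with smoothness of $R(t,s)$ and of the quadratic form $q_{t,s}$ in $s,t$). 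It therefore remains to produce a family of projections $(P_\lambda)_{\lambda>0}$ on $\Ell^p(\R^d)$ satisfying the uncertainty principle~\eqref{eq:ass:uncertainty} and the dissipation estimate~\eqref{eq:ass:dissipation}, with exponents $\gamma_1<\gamma_2$; once this is done, Theorem~\ref{thm:observability} immediately yields the conclusion for every measurable $E\subseteq[0,T]$ with positive measure and every $r\in[1,\infty]$.

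For the projections I would take exactly the smooth radial frequency cutoffs $P_\lambda f \coloneqq \cF^{-1}(\chi_\lambda \cF f)$ introduced in the proof of Theorem~\ref{thm:obs}, where $\chi_\lambda(\xi)=\eta(|\xi|/\lambda)$ for a fixed $\eta\in\CC_{\mathrm c}^\infty([0,\infty))$ with $\eta|_{[0,1/2]}=1$ and $\supp\eta\subseteq[0,1]$. This family is uniformly bounded on $\Ell^p(\R^d)$, each $P_\lambda f$ has Fourier support in the cube $[-\lambda,\lambda]^d$, and the Logvinenko--Sereda theorem as in~\cite[Theorem~3]{Kovrijkine-01} together with uniform $(L,\rho)$-thickness of $(\thickset(t))_{t\in[0,T]}$ yields
\[
  \norm{P_\lambda f}_{\Ell^p(\R^d)} \leq d_0 \,\ee^{d_1 \lambda}\, \norm{\one_{\thickset(t)} P_\lambda f}_{\Ell^p(\thickset(t))}
\]
uniformly in $t\in[0,T]$. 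This is exactly~\eqref{eq:ass:uncertainty} with $\gamma_1=1$.

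For the dissipation estimate I would exploit the explicit representation~\eqref{eq:OU_Fourier_Aqlambda} together with the key positive-definiteness bound~\eqref{eq:beps1}: namely $q_{t,s}(\xi)\geq c(t-s)^{m_1}|\xi|^2$ for $0\leq s\leq t\leq \widetilde\varepsilon$. Writing $U_p(t,s)f$ as a scalar multiple of $\mathcal{F}^{-1}(\ee^{-q_{t,s}/2})* g_{t,s}$ with $g_{t,s}(x)=|\det R(t,s)|^{-1}f(R(t,s)^{-1}x)$ (so that $\|g_{t,s}\|_{\Ell^p}\leq |\det R(t,s)|^{-1/p'}\|f\|_{\Ell^p}$ by Lemma~\ref{lem:Aqlambda}), the operator $(\id-P_\lambda)U_p(t,s)$ becomes convolution with the kernel $\mathcal{F}^{-1}\bigl((1-\chi_\lambda)\ee^{-q_{t,s}/2}\bigr)$. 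The point is to bound the $\Ell^1$-norm of this kernel by $d_2\,\ee^{-d_3 \lambda^2 (t-s)^{m_1}}$, which together with Young's inequality and the uniform-in-$[0,\widetilde{T}]$ control of $|\det R(t,s)|$ by Liouville's formula~\eqref{eq:liouville} gives
\[
  \norm{(\id-P_\lambda)U_p(t,s)f}_{\Ell^p(\R^d)} \leq d_2 \,\ee^{-d_3 \lambda^2 (t-s)^{m_1}} \,\norm{f}_{\Ell^p(\R^d)},
\]
i.e.\ \eqref{eq:ass:dissipation} with $\gamma_2=2$ and $\gamma_3=m_1$; in particular the crucial ordering $\gamma_1=1<2=\gamma_2$ holds. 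This is precisely where the restriction $T\in[0,\widetilde\varepsilon\,]$ enters, since the bound~\eqref{eq:beps1} from~\cite{BeauchardEP-S-20} is only available in that range.

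The main obstacle is the $\Ell^1$-kernel estimate just alluded to. A direct $\Ell^2$ bound is immediate from Plancherel and the pointwise estimate $(1-\chi_\lambda(\xi))\ee^{-q_{t,s}(\xi)/2}\leq \ee^{-c(t-s)^{m_1}\lambda^2/8}$ on the support of $1-\chi_\lambda$, but passing to general $p\in(1,\infty)$ requires a bound on the $\Ell^1$-norm of the inverse Fourier transform of the cut-off Gaussian multiplier. I would handle this by splitting
\[
  (1-\chi_\lambda)\ee^{-q_{t,s}/2} = (1-\chi_\lambda)\ee^{-q_{t,s}/4}\cdot \ee^{-q_{t,s}/4},
\]
using the $\Ell^1$-normalisation of $\mathcal{F}^{-1}(\ee^{-q_{t,s}/4})$ (a rescaled Gaussian of total mass $1$, as in the proof of Lemma~\ref{lem:Aqlambda}) and transferring the exponential decay factor $\ee^{-c(t-s)^{m_1}\lambda^2/16}$ from the pointwise bound of $(1-\chi_\lambda)\ee^{-q_{t,s}/4}$ onto the $\Ell^1$-side by a Gaussian-tail argument analogous to~\cite[Proposition~3.2]{BombachGST-20}. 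Once this final estimate is in place, the three ingredients — exponential boundedness, uncertainty, and dissipation — combine through Theorem~\ref{thm:observability}, yielding the asserted final-state observability estimate, and Hölder's inequality handles the passage between the different values of $r\in[1,\infty]$.
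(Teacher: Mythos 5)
Your overall architecture coincides with the paper's: verify Hypothesis~\ref{hypo:evoFam} for $X=Y=\Ell^p(\R^d)$ with the restriction operators $C(t)$, the smooth frequency cutoffs $P_\lambda$ from the proof of Theorem~\ref{thm:obs}, the Logvinenko--Sereda theorem for the uncertainty principle with $\gamma_1=1$, and the bound~\eqref{eq:beps1} (valid only for $0\leq s\leq t\leq\widetilde\varepsilon$, which is exactly where the smallness of $T$ enters) for the dissipation estimate with $\gamma_2=2$, $\gamma_3=m_1$; then invoke Theorem~\ref{thm:observability}. The divergence is entirely in how the dissipation estimate is obtained, and this is where your proposal has a genuine gap. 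The paper never attempts an $\Ell^1$-bound on the kernel $\cF^{-1}\bigl((1-\chi_\lambda)\ee^{-q_{t,s}/2}\bigr)$: it takes the $\Ell^2$-dissipation estimate essentially for free from \cite[Proposition~15]{BeauchardEP-S-20} (Plancherel plus the lower bound on $q_{t,s}$), pairs it with the trivial operator-norm bound $\norm{(\id-P_\lambda)U_q(t,s)}_{\cL(\Ell^q)}\leq K\ee^{(\frac12-\frac1{q'})\int_s^t\trace B(\tau)\,\d\tau}$ for $q=1$ (resp.\ $q\to\infty$), and interpolates by Riesz--Thorin; the decay exponent only gets multiplied by the interpolation parameter, which is harmless.

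Your direct kernel route leaves its core step unproven, and the claimed analogy with \cite[Proposition~3.2]{BombachGST-20} does not carry over straightforwardly. In the elliptic case the multiplier $\ee^{-(t-s)\abs{\xi}^m}$ scales exactly, so the kernel is a fixed $\Ell^1$-function dilated by $(t-s)^{1/m}$. Here $q_{t,s}$ is anisotropic and inhomogeneous in time: its coefficient matrix has norm of order $t-s$, while the lower bound is only $c(t-s)^{m_1}\abs{\xi}^2$ with possibly $m_1>1$ (in the Kolmogorov example the decay scales like $(t-s)^{1/2}$ in $\eta$ but $(t-s)^{3/2}$ in $\xi$, against an isotropic cutoff $\chi_\lambda$). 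Running the standard estimate $\norm{\cF^{-1}g}_{\Ell^1}\lesssim\sum_{\abs{\alpha}\leq 2N}\norm{\partial^\alpha g}_{\Ell^2}$, $N>d/2$, on $g=(1-\chi_\lambda)\ee^{-q_{t,s}/4}$ therefore produces prefactors that are polynomial in $\lambda$ and in negative powers of $t-s$ which do not assemble into powers of the dimensionless quantity $\lambda^2(t-s)^{m_1}$; in particular the regime $\lambda^2(t-s)^{m_1}\lesssim 1$ must be treated separately via the trivial bound, and the regime of small $t-s$ with large $\lambda$ needs a careful bookkeeping that your sketch does not supply. I believe the estimate can be salvaged with effort, but as written it is an unverified claim carrying the whole weight of the theorem; the interpolation argument of the paper avoids it entirely and is the route I would recommend.
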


Note that, in contrast to Theorem \ref{thm:obs}, the constant $C_{\mathrm{obs}}$ may (and will) depend on $p$ (and $\widetilde{\varepsilon}\, $).

\begin{proof}[Proof of Theorem~\ref{thm:OU-obs}]
	We check that Hypothesis \ref{hypo:evoFam} is satisfied for the choice of $X = Y = \Ell^p(\R^d)$, $C(t)$ the restriction operator to $\thickset(t)$, $(P_\lambda)_{\lambda > 0}$ the family of smooth frequency cutoffs as defined in the proof of Theorem \ref{thm:obs}, and $(U(t,s))_{0 \leq s \leq t \leq  \widetilde T} = (U_p(t,s))_{0 \leq s \leq t \leq \widetilde T}$ the evolution family on $\Ell^p(\R^d)$ associated with a non-autonomous Ornstein--Uhlenbeck equation as above.
	
	The uncertainty principle follows directly from the Logvinenko--Sereda theorem \cite[Theorem~3]{Kovrijkine-01}, so we merely need to check the dissipation estimate. 
  To this end, define the \emph{sharp} spectral cutoff operator
	\begin{equation*}
		Q_\lambda \colon \Ell^2(\R^d) \to \Ell^2(\R^d), \quad f \mapsto \mathcal{F}^{-1}\mathbf{1}_{[-\lambda , \lambda]^d}\mathcal{F}f\,.
	\end{equation*} From \cite[Proposition~15]{BeauchardEP-S-20}, it follows immediately that there exist constants $c_0,c_1, \widetilde \varepsilon > 0$, and $m_1 \in \N$ such that, for all $0 \leq s \leq t \leq T \coloneqq \widetilde \varepsilon$, we have
	\begin{equation*}
		\norm{(\id - P_\lambda)U_2(t,s)}_{\cL(\Ell^2(\R^d))} \leq \norm{(\id - Q_{\lambda/(2\sqrt{d})})U_2(t,s)}_{\cL(\Ell^2(\R^d))} \leq c_0\ee^{-c_1(t-s)^{m_1}\lambda^2},
	\end{equation*}
	where $m_1$ is as in \eqref{eq:beps1}. On the other hand, it follows from the norm estimate~\eqref{eq:OU_Norm} that, for all $0 \leq s \leq t \leq T$, we have
	\begin{align}\label{eq:l1}
    \begin{split}
		&\norm{(\id - P_\lambda)U_1(t,s)}_{\cL(\Ell^1(\R^d))} \\ 
    &\leq \norm{\id - P_\lambda}_{\cL(\Ell^1(\R^d))}\norm{U_1(t,s)}_{\cL(\Ell^1(\R^d))}
    \leq K \ee^{\frac{1}{2}\int\limits_s^t\trace{B(\tau)}\d \tau},
    \end{split}
	\end{align}
	where we used uniform boundedness of the family $(P_\lambda)_{\lambda > 0}$ to define the constant
	\[
	 K \coloneqq 1+\norm{\mathcal{F}^{-1} \chi_1}_{\Ell^{1}(\R^d)}.
	\]
	By the \emph{Riesz--Thorin interpolation theorem}, for $p \in (1,2)$, we obtain
	\begin{equation*}
			\norm{(\id - P_\lambda)U_p(t,s)}_{\cL(\Ell^p(\R^d))} \leq  (c_0\ee^{-c_1(t-s)^{m_1}\lambda^2})^{1 -\theta} K^\theta \ee^{\frac{\theta}{2}\int\limits_s^t\trace{B(\tau)} \d \tau},
	\end{equation*}
	where
	\begin{equation*}
		\frac{1}{p} = \frac{\theta}{1} + \frac{1- \theta}{2}, \quad \text{i.e.\ }\quad \theta = \frac{1}{p} - \frac{1}{p'} = \frac{2}{p}-1 \quad \text{and} \quad 1 - \theta = \frac{2}{p'} = 2-\frac{2}{p}\,,
	\end{equation*}
	and thus, setting
	\begin{equation*}
		M_p \coloneqq \max\limits_{0 \leq s \leq t \leq T} K^{\frac{2}{p}-1} \ee^{(\frac{1}{p} - \frac{1}{2})\int\limits_s^t\trace{B(\tau)} \d \tau},
	\end{equation*}
	we obtain
	\begin{equation*}
		\norm{(\id - P_\lambda)U_p(t,s)}_{\cL(\Ell^p(\R^d))} \leq M_p (c_0\ee^{-c_1(t-s)^{m_1}\lambda^2})^{2-2/p}\,.
	\end{equation*}
	This proves the necessary dissipation estimate in the case that $p \in (1,2)$.
	For the case $p\in (2,\infty)$, fix $q \in \R$ such that $p < q$. If we replace \eqref{eq:l1} by
	\begin{equation*}
		\norm{(\id - P_\lambda)U_{q}(t,s)}_{\cL(\Ell^q(\R^d))} \leq K \ee^{(\frac{1}{2}-\frac{1}{q'})\int\limits_s^t\trace{B(\tau)}\d \tau}
	\end{equation*}
	in the above argument, where $1/q+1/q' = 1$, we end up with
	\begin{equation}\label{eq:dissipationOU}
		\norm{(\id - P_\lambda)U_p(t,s)}_{\cL(\Ell^p(\R^d))} \leq  (c_0\ee^{-c_1(t-s)^{m_1}\lambda^2})^{1-\sigma}K^{\sigma} \ee^{\sigma(\frac{1}{2} - \frac{1}{q'})\int\limits_s^t\trace{B(\tau)}\d \tau}\, ,
	\end{equation}
	where
	\begin{equation*}
		\frac{1}{p} = \frac{1-\sigma}{2} + \frac{\sigma}{q}, \quad\text{i.e.\ }\quad \sigma = \frac{p-2}{p}\cdot \frac{q}{q - 2}\,.
	\end{equation*}
	Letting $q \to \infty$ in~\eqref{eq:dissipationOU}, we obtain
	\begin{equation*}
		\norm{(\id - P_\lambda)U_p(t,s)}_{\cL(\Ell^p(\R^d))} \leq (c_0\ee^{-c_1(t-s)^{m_1}\lambda^2})^{2/p} K^{1-2/p} \ee^{(\frac{1}{p} - \frac{1}{2})\int\limits_s^t\trace{B(\tau)}\d \tau}
	\end{equation*}
	and therefore, by setting
	\begin{equation*}
		N_p \coloneqq \max\limits_{0 \leq s \leq t \leq T} K^{1-2/p} \ee^{(\frac{1}{p} - \frac{1}{2})\int\limits_s^t\trace{B(\tau)}\d \tau},
	\end{equation*}
	we obtain the estimate
	\begin{equation*}
		\norm{(\id - P_\lambda)U_p(t,s)}_{\cL(\Ell^p(\R^d))} \leq N_p \,  (c_0\ee^{-c_1(t-s)^{m_1}\lambda^2})^{2/p} 
	\end{equation*}
	for $p \in (2,\infty)$.
	In either case, this proves the dissipation estimate. The claim now follows from Theorem~\ref{thm:observability}.
\end{proof}

We demonstrate the above result by an example concerning the (autonomous) Kolmogorov equation with time-dependent observation sets.

\begin{example}
\label{ex:Kolmogorov}
We consider the evolution family associated with the classical \emph{Kolmogorov equation}
\begin{align*}
\partial_t u(t,x,v) - \Delta_v u(t,x,v) + v\cdot \nabla_x u(t,x,v) &= 0, \quad (t,x,v) \in [0,T] \times \R^d \times \R^d, \\
u(0,x,v) &=u_0(x,v), \quad u_0 \in \Ell^p(\R^d \times \R^d)\,,
\end{align*}
which, in the notation of \eqref{eq:OU_P}, corresponds to the choice of
\begin{equation*}
A(t) = \begin{pmatrix}
0 & 0 \\ 0 & \sqrt{2}\, \id_{\R^d}
\end{pmatrix} ,\quad B(t) = \begin{pmatrix}
0 & \id_{\R^d} \\ 0 & 0
\end{pmatrix},\quad t\in [0,T]\,.
\end{equation*}
It follows that, for $0 \leq s \leq t \leq T$ and the form
\begin{equation*}
q_{t,s}(\xi,\eta) = 2(t-s)\abs{\eta}^2 + 2(t-s)^2\eta\cdot\xi +\frac{2}{3}(t-s)^3\abs{\xi}^2, \quad (\xi, \eta) \in \R^d \times \R^d,
\end{equation*}
the associated evolution family $(U(t,s))_{0\leq s\leq t\leq T}$ is given for $u \in \mathcal{S}(\R^d \times \R^d)$ by
\begin{equation*}
(\mathcal{F}U(t,s)u) (\xi,\eta) = \ee^{-\frac{1}{2}q_{t,s}(\xi,\eta)}(\mathcal{F}u)(\xi,\eta + (t-s)\xi), \quad (\xi, \eta) \in \R^d \times \R^d\,,
\end{equation*}
which we can again extend to $\Ell^p(\R^d\times \R^d)$ by density for $p\in [1,\infty)$.
Note that, for arbitrary choices of $T$, we have for all $0 \leq s \leq t \leq T$
\begin{equation*}
q_{t,s}(\xi,\eta) 
\geq 
\frac{4 - \sqrt{13}}{3} \, \min \big\{ T^{-2}, 1 \big\} \, (t - s)^3 \big(\abs{\xi}^2 + \abs{\eta}^2 \big)\,.
\end{equation*}
Therefore, estimate~\eqref{eq:beps1} holds for $\widetilde \varepsilon \coloneqq T$ in this case.
Now, let $(\Omega(t))_{t\in [0,T]}$ be uniformly thick on $[0,T]$, $E\subseteq [0,T]$ measurable with positive Lebesgue measure, and $r\in [1,\infty]$. Then Theorem~\ref{thm:OU-obs} yields for $p\in (1,\infty)$ a final-state observability estimate.
\end{example}

\begin{rem}
    Theorem~\ref{thm:OU-obs} and Example~\ref{ex:Kolmogorov} can be regarded as extensions of~\cite[Corollary~8(i)]{BeauchardEP-S-20} and~\cite[Proposition~9]{BeauchardEP-S-20}, as we can treat non-autonomous Ornstein--Uhlenbeck equations in $\Ell^p(\R^d)$ for $p\in (1,\infty)$.
\end{rem}

\appendix
\section{Properties of Non-autonomous Elliptic Operators}
\label{sec:properties}

Let $T>0$, and let $\fa$ be a uniformly strongly elliptic polynomial of degree $m\geq 2$ with coefficients $a_\alpha\in \Ell^\infty(0,T)$ for $\abs{\alpha}\leq m$.
In this appendix, we collect some properties of $(U_p(t,s))_{0\leq s\leq t \leq T}$, where $(U(t,s))_{0\leq s\leq t \leq T}$ is as in \eqref{eq:defnEvoFam} and $p\in [1,\infty]$. 

\subsection{Kernel Estimates and Exponential Boundedness}

We will show that we can find \emph{Gaussian bounds} for the kernel $p_{t,s}$, $0\leq s <t \leq T$, from \eqref{eq:kernel}. We do this in two steps.

\begin{lem}
  \label{lem:real_part_bound}
    Let $c>0$ be the uniform ellipticity constant of $\fa$ from~\eqref{eq:ellipticPolynomial}. Then, for all $c_0\in (0,c)$, there exists $\omega\in\R$ such that
  \[\Re \fa(t,\xi) \geq c_0\abs{\xi}^m - \omega ,\quad \text{a.e. } t\in [0,T], \xi\in\R^d.\]
\end{lem}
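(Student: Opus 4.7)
The plan is to isolate the principal symbol and absorb the lower-order contributions into the ellipticity estimate via Young's inequality. By definition, we may split
\[
  \Re \fa(t,\xi) = \Re \fa_m(t,\xi) + \Re \sum_{|\alpha|<m} a_\alpha(t)(\ii \xi)^\alpha,
\]
and, by uniform strong ellipticity in the form~\eqref{eq:ellipticPolynomial}, the first summand satisfies $\Re \fa_m(t,\xi) \geq c|\xi|^m$ for every $t\in[0,T]$ and $\xi\in\R^d$. For the second summand, a crude termwise bound gives, for almost every $t\in[0,T]$ and all $\xi\in\R^d$,
\[
  \Bigl|\sum_{|\alpha|<m} a_\alpha(t)(\ii\xi)^\alpha\Bigr|
  \leq M \sum_{k=0}^{m-1} |\xi|^k,
\]
where $M$ collects the (finitely many) essential suprema $\|a_\alpha\|_{\Ell^\infty(0,T)}$ for $|\alpha|<m$ together with a combinatorial factor counting multi-indices.

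Next, I would apply Young's inequality in the form $|\xi|^k \leq \delta|\xi|^m + C_{\delta,k,m}$, valid for any $\delta>0$, $k\in\{0,\dots,m-1\}$, and $\xi\in\R^d$, with an explicit constant $C_{\delta,k,m}$ depending on $\delta$, $k$, and $m$. Summing over $k$, this yields
\[
  \Re \fa(t,\xi) \geq c|\xi|^m - M\Bigl(\delta m |\xi|^m + \sum_{k=0}^{m-1} C_{\delta,k,m}\Bigr)
  = (c - \delta m M)|\xi|^m - \omega(\delta),
\]
with $\omega(\delta) \coloneqq M\sum_{k=0}^{m-1} C_{\delta,k,m}$, and this holds for almost every $t\in[0,T]$ and all $\xi\in\R^d$.

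Finally, given $c_0\in(0,c)$, choose $\delta \coloneqq (c-c_0)/(mM)$ (or any smaller positive number, with the obvious modification if $M=0$), so that $c - \delta m M = c_0$, and set $\omega \coloneqq \omega(\delta)$. This produces the claimed inequality. There is no real obstacle here: the only point that needs attention is making sure the almost-everywhere qualifier is carried through by relying on essential suprema rather than pointwise bounds on the coefficients $a_\alpha$.
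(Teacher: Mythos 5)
Your proof is correct and follows essentially the same route as the paper: split off the principal symbol, apply the uniform ellipticity bound, and absorb the lower-order terms (which grow at most like $|\xi|^{m-1}$) into the surplus $(c-c_0)|\xi|^m$ at the cost of a constant $\omega$. The paper simply declares this absorption step "easy to see," whereas you spell it out via Young's inequality; the substance is identical.
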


\begin{proof}
	We have $\Re \fa_m(t,\xi) \geq c\abs{\xi}^m$ for all $t\in[0,T]$ and $\xi\in\R^d$. 
        Thus, for $c_0\in (0,c)$, we estimate
	\[
          \Re \fa(\cdot,\xi) \geq  c_0\abs{\xi}^m + (c-c_0)\abs{\xi}^m + \sum_{\abs{\alpha} < m} \Re (a_\alpha(\cdot) \ii^\alpha ) \xi^\alpha .
        \]
	Since the coefficient functions are locally essentially bounded, it is easy to see that there exists $\omega\in\R$ such that
	\[(c-c_0)\abs{\xi}^m + \sum_{\abs{\alpha} < m} (\Re a_\alpha(t)\ii^\alpha) \xi^\alpha \geq -\omega ,\quad \text{a.e. } t\in [0,T], \xi\in\R^d.\]
	This yields the assertion.
\end{proof}

\begin{lem}
	\label{lem:kernel_bound}
        There exist $C_1,C_2\geq0$, and $\omega\in\R$ such that, for all $0 \leq s < t \leq T$ and all $x \in \R^d$, we have
	\[
	\abs{p_{t,s}(x)} \leq C_1 \frac{1}{(t-s)^{d/m}} \ee^{\omega(t-s)} \ee^{-C_2 \bigl(\abs{x}^m/(t-s)\bigr)^{1/m-1}} .
	\]
        In particular, for all $p\in [1,\infty]$ and $0 \leq s < t \leq T$, we have
	\begin{align*}
		\norm{U_p(t,s)}_{\cL(\Ell^p(\R^d))} = \norm{p_{t,s}}_{\Ell^1(\R^d)} \leq C_1 \ee^{\omega (t-s)} \int_{\R^d} \ee^{-C_2 \abs{x}^{m/(m-1)}}\,\d x.
	\end{align*}
\end{lem}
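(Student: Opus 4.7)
My plan is a standard contour-shift argument exploiting the polynomial nature of $\fa$ in $\xi$. Set $h \coloneqq t-s$ and $\psi(\xi) \coloneqq \int_s^t \fa(\tau,\xi)\,d\tau$; then Lemma~\ref{lem:real_part_bound}, applied pointwise in $\tau$ and integrated, yields $\Re \psi(\xi) \geq c_0 h \abs{\xi}^m - \omega h$ for $\xi \in \R^d$. Since $\psi$ is a polynomial in $\xi$, it extends to an entire function on $\C^d$. Starting from
\[
p_{t,s}(x) = (2\pi)^{-d} \int_{\R^d} \ee^{\ii x\cdot\xi - \psi(\xi)}\,d\xi,
\]
I would apply Cauchy's theorem to shift the contour from $\R^d$ to $\R^d + \ii\eta$ for a suitably chosen $\eta \in \R^d$, obtaining
\[
p_{t,s}(x) = (2\pi)^{-d}\ee^{-x\cdot\eta}\int_{\R^d}\ee^{\ii x\cdot\xi - \psi(\xi+\ii\eta)}\,d\xi.
\]

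The heart of the argument is a coercivity estimate for the shifted phase: I claim there exist $c_0' > 0$, $C_1 \geq 0$, and $\omega' \in \R$, independent of $\tau$, $s$, $t$, such that
\[
\Re \fa(\tau,\xi + \ii\eta) \geq c_0' \abs{\xi}^m - C_1 \abs{\eta}^m - \omega', \qquad \xi,\eta \in \R^d.
\]
To prove this, I would expand $(\ii(\xi+\ii\eta))^\alpha = (\ii\xi-\eta)^\alpha$ by the binomial theorem and isolate the pure-$\xi$ term $(\ii\xi)^\alpha$: summed over $\abs{\alpha}=m$, these reproduce $\Re \fa_m(\tau,\xi) \geq c\abs{\xi}^m$ by uniform strong ellipticity. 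The remaining terms carry at least one factor of $\eta$ and are dominated by $C(\abs{\xi}+\abs{\eta})^{m-1}\abs{\eta}$ with $C$ depending only on $\max_{\abs{\alpha}\leq m}\norm{a_\alpha}_{\Ell^\infty(0,T)}$. Young's inequality $\abs{\xi}^{m-1}\abs{\eta}\leq \epsilon\abs{\xi}^m + C_\epsilon\abs{\eta}^m$ with small $\epsilon$ absorbs these cross terms into the principal contribution, and the lower-order part $\fa - \fa_m$ is handled analogously at the cost of enlarging $\omega'$. Integration over $\tau$ yields $\Re \psi(\xi+\ii\eta) \geq c_0' h\abs{\xi}^m - C_1 h\abs{\eta}^m - \omega' h$, which also justifies the contour shift by applying Cauchy's theorem to rectangles $[-R,R]^d \times [0,\eta]$ and letting $R \to \infty$: the $\abs{\xi}^m$-coercivity forces the vertical sides to vanish.

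Inserting this bound produces
\[
\abs{p_{t,s}(x)} \leq C \ee^{\omega' h}\, h^{-d/m}\, \exp\!\bigl(-x\cdot\eta + C_1 h\abs{\eta}^m\bigr),
\]
where the factor $h^{-d/m}$ arises from rescaling $\xi = (c_0' h)^{-1/m}\zeta$ in the Gaussian-type integral. I would then choose $\eta = \lambda\, x/\abs{x}$ with $\lambda \geq 0$ (and $\eta = 0$ when $x=0$), reducing the exponent to $-\lambda\abs{x} + C_1 h \lambda^m$. Minimizing over $\lambda$ at $\lambda_\ast = (\abs{x}/(mC_1 h))^{1/(m-1)}$ gives the value $-C_2(\abs{x}^m/h)^{1/(m-1)}$ for an explicit $C_2 > 0$, producing the claimed pointwise estimate. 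The $\Ell^1$-bound then follows from the substitution $x = h^{1/m}y$, which transforms $(\abs{x}^m/h)^{1/(m-1)}$ into $\abs{y}^{m/(m-1)}$ while contributing a Jacobian $h^{d/m}$ that cancels the prefactor; the operator-norm identity $\norm{U_p(t,s)}_{\cL(\Ell^p(\R^d))} = \norm{p_{t,s}}_{\Ell^1(\R^d)}$ noted before the statement then yields the norm estimate.

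I expect the main obstacle to be the coercivity estimate for $\Re \fa(\tau,\xi+\ii\eta)$: the multi-index bookkeeping and the uniform-in-$\tau$ choice of $\epsilon$ require care, though they only use the $\Ell^\infty$ bounds on the $a_\alpha$ and the given ellipticity constant. Justifying the contour shift rigorously is standard but technical and rests on the same coercivity bound.
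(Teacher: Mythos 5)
Your proposal is correct and follows essentially the same route as the paper's proof (which itself follows Ter Elst--Robinson): contour shift to $\R^d+\ii\eta$, a uniform-in-$\tau$ coercivity bound $\Re\fa(\tau,\xi+\ii\eta)\geq \tfrac{c_0}{2}\abs{\xi}^m-\sigma\abs{\eta}^m-\omega$ obtained via binomial expansion and Young's inequality, rescaling to extract the $(t-s)^{-d/m}$ factor, and optimizing $\eta$ along $x/\abs{x}$. The only cosmetic difference is that you minimize over $\lambda$ exactly while the paper simply picks $\eta=\tfrac12(\abs{x}/(\sigma(t-s)))^{1/(m-1)}x/\abs{x}$, and you spell out the Cauchy-theorem justification of the contour shift that the paper leaves implicit.
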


\begin{proof}
	We follow the argument in \cite[Proposition~2.1]{TerElstR-96}.
        Note that, although $\fa(t,\cdot)$ is defined on $\R^d$, since it is a polynomial, we can extend it to $\C^d$ for all $t\in[0,T]$.
	Let $0\leq s< t\leq T$, $x\in\R^d$.
        Then, for $\eta\in\R^d$, we obtain via the change of variables formula
	\[p_{t,s}(x) = \frac{1}{(2\pi)^{d}} \int_{\R^d} \ee^{\ii x\cdot\xi} \ee^{-x\cdot\eta}\ee^{-\int\limits_s^t \fa(\tau,\xi+\ii\eta)\d \tau}\d\xi.\]
        In view of Lemma~\ref{lem:real_part_bound}, there exist $c_0,c_1,c_2>0$ such that, for almost all $\tau \in [0,T]$ and all $\xi , \eta \in \R^d$, we have
	\begin{align*}
        \Re \fa(\tau,\xi+\ii\eta) & = \Re \fa(\tau,\xi) + \Re \fa(\tau,\ii \eta) + \Re a_0(\tau) \\
        &\quad + \Re \sum_{\genfrac{}{}{0pt}{2}{\abs{\alpha}\leq m}{\alpha\neq 0}} a_\alpha(\tau) \sum_{\genfrac{}{}{0pt}{2}{\beta\leq \alpha}{\beta \neq 0,\alpha}} \genfrac{(}{)}{0pt}{0}{\alpha}{\beta} (\ii \xi)^\beta (-\eta)^{\alpha-\beta}\\
        & \geq c_0 \abs{\xi}^m - c_1 \abs{\eta}^m - c_2\Bigl(1+\sum_{1\leq k\leq m} \sum_{\genfrac{}{}{0pt}{2}{1\leq l\leq k}{l\neq k}} \abs{\xi}^l \abs{\eta}^{k-l}\Bigr).
    \end{align*}
    Note that, in order to estimate $ \Re \fa(\tau,\ii \eta)$, the fact that $m$ is an even number is crucial as it implies $\fa_m(\tau, \ii \eta) = (-1)^m \fa_m(\tau, \eta)$ which follows directly from Definition~\ref{defn:ellipticPolynomial}.
    Now, by Young's inequality for products, we can choose $\omega\geq 0$ such that
    \[c_2\Bigl(1+\sum_{1\leq k\leq m} \sum_{\genfrac{}{}{0pt}{2}{1\leq l\leq k}{l\neq k}} \abs{\xi}^l \abs{\eta}^{k-l}\Bigr)\leq \frac{c_0}{2} \abs{\xi}^m + \omega(1+\abs{\eta}^m).\]
    Thus, we finally arrive at
    \[ \Re \fa(\tau,\xi+\ii\eta) \geq \frac{c_0}{2} \abs{\xi}^m - \sigma\abs{\eta}^m - \omega,\]
    where $\sigma \coloneqq c_1 + \omega$.
	Hence, we can estimate
	\begin{align*}
		\abs{p_{t,s}(x)} & \leq \frac{1}{(2\pi)^{d}} \int_{\R^d} \ee^{-x\cdot\eta}\ee^{-\int_s^t \Re \fa(\tau,\xi+\ii\eta)\d \tau} \d\xi \\
		& \leq \frac{1}{(2\pi)^{d}} \int_{\R^d} \ee^{-x\cdot\eta}\ee^{-(t-s)(\frac{c_0}{2}\abs{\xi}^m - \sigma\abs{\eta}^m - \omega)} \d\xi \\
		& = C_1 \frac{1}{(t-s)^{d/m}}\ee^{-x\cdot\eta}\ee^{\omega(t-s)}\ee^{(t-s)\sigma\abs{\eta}^m},
	\end{align*}
	where $C_1\coloneqq \frac{1}{(2\pi)^{d}} \int_{\R^d} \ee^{-\frac{c_0}{2}\abs{\xi}^m} \d\xi$.
        Now, for $\eta\coloneqq \frac{1}{2}(\frac{\abs{x}}{\sigma(t-s)})^{1/(m-1)} \frac{x}{\abs{x}}$, we obtain
	\[\abs{p_{t,s}(x)} \leq C_1 \frac{1}{(t-s)^{d/m}} \ee^{\omega(t-s)}\ee^{-C_2 \bigl(\abs{x}^m/(t-s)\bigr)^{1/m-1}},
	\]
	where $C_2\coloneqq \frac{2^{m-1}-1}{2^m}$.
	Thus, integration yields the assertion for $\norm{p_{t,s}}_{\Ell^1(\R^d)}$. For $p\in[1,\infty]$, the operator $U_p(t,s)$ is the convolution operator with kernel $p_{t,s}$. Thus, we have $\norm{U_p(t,s)}_{\cL(\Ell^p(\R^d))} = \norm{p_{t,s}}_{\Ell^1(\R^d)}$.
\end{proof}

\subsection{Strong Continuity}

We now show that $(U_p(t,s))_{0\leq s\leq t \leq T}$ is strongly continuous for $p\in [1,\infty)$, while $(U_\infty(t,s))_{0\leq s\leq t \leq T}$ is strongly continuous with respect to the weak$^*$-topology.

We start with a subspace of $\cS'(\R^d)$ that can be identified with a subset of $\CC^\infty(\R^d)$.
Let $\cO_\mathrm{M}(\R^d)$ denote the \emph{multiplier space} 
\begin{align*}
  \cO_\mathrm{M}(\R^d) \coloneqq \big\{ f \in \CC^\infty(\R^d) : \forall g \in \cS(\R^d), \alpha \in \N_0^d:  \|f\|_{g,\alpha} < \infty\big\},
\end{align*}
where the family of seminorms $(\|\cdot\|_{g, \alpha})_{g,\alpha}$ is defined via
\begin{align*}
  \|f\|_{g, \alpha} \coloneqq \sup_{x \in \R^d} \big| g(x) \, \partial^\alpha f(x) \big| 
  ,\quad f\in \CC^\infty(\R^d),
\end{align*}
and induces a locally convex topology on $\cO_\mathrm{M}(\R^d)$, cf.\@ \cite[Chapter~7, §5, p.~243]{Schwartz-78}, \cite[Example~5.3]{Kruse-19}. Note that the multiplication $\cO_\mathrm{M}(\R^d)\times \cS(\R^d)\ni (f,g)\mapsto fg\in\cS(\R^d)$ is \emph{hypocontinuous}, in particular separately continuous, see, e.g,.~\cite{Larcher-13}.

\begin{prop}\label{prop:omconvergence}
    Let $(f_n)_{n\in\N}$ be in $\CC^\infty(\R^d)$ and $f \in \CC^\infty(\R^d)$ such that, for all $\alpha \in \N_0^d$, we have $\sup_{n \in \N} \|\partial^\alpha f_n\|_\infty < \infty$ and $\partial^\alpha f_n\to \partial^\alpha f$ uniformly on compact sets.
  Then $(f_n)_n$ is in $\cO_\mathrm{M}(\R^d)$, $f\in \cO_\mathrm{M}(\R^d)$, and $f_n \to f$ in $\cO_\mathrm{M}( \R^d)$.
\end{prop}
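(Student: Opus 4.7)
The plan is to verify each assertion directly from the definitions. For membership in $\cO_{\mathrm M}(\R^d)$, I would fix $g \in \cS(\R^d)$ and $\alpha \in \N_0^d$. Since $g$ is bounded and $\sup_n \|\partial^\alpha f_n\|_\infty < \infty$, we have
\[
  \|f_n\|_{g,\alpha} = \sup_{x\in\R^d} |g(x)\,\partial^\alpha f_n(x)| \leq \|g\|_\infty \sup_n \|\partial^\alpha f_n\|_\infty < \infty,
\]
so $f_n\in\cO_{\mathrm M}(\R^d)$. For $f$, the pointwise limit $\partial^\alpha f_n\to\partial^\alpha f$ (which holds by uniform convergence on compact sets) lets us pass to the limit in $|\partial^\alpha f_n(x)|\leq M_\alpha \coloneqq \sup_n\|\partial^\alpha f_n\|_\infty$ to conclude $\|\partial^\alpha f\|_\infty\leq M_\alpha$, hence $f\in\cO_{\mathrm M}(\R^d)$.

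The nontrivial part is convergence in the locally convex topology, i.e.\ $\|f_n-f\|_{g,\alpha}\to 0$ for every admissible pair $(g,\alpha)$. Here I would use the classical compact/non-compact split exploiting the rapid decay of Schwartz functions. Fix $\varepsilon>0$. Since $g\in\cS(\R^d)$, there exists $R>0$ such that $|g(x)|<\varepsilon$ for $|x|\geq R$. On $\complement\BB(0,R)$, the bound $|\partial^\alpha f_n(x)-\partial^\alpha f(x)|\leq 2M_\alpha$ gives
\[
  \sup_{|x|\geq R} |g(x)(\partial^\alpha f_n(x) - \partial^\alpha f(x))| \leq 2 M_\alpha \, \varepsilon.
\]
On the compact ball $\overline{\BB(0,R)}$, the hypothesis of uniform convergence on compact sets yields
\[
  \sup_{|x|\leq R} |g(x)(\partial^\alpha f_n(x) - \partial^\alpha f(x))| \leq \|g\|_\infty \sup_{|x|\leq R}|\partial^\alpha f_n(x)-\partial^\alpha f(x)| \xrightarrow[n\to\infty]{} 0.
\]
Combining the two estimates, $\limsup_n \|f_n-f\|_{g,\alpha}\leq 2M_\alpha\varepsilon$, and since $\varepsilon$ was arbitrary this proves $\|f_n-f\|_{g,\alpha}\to 0$.

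There is no genuine obstacle in the argument; the only delicate point is recognizing that the topology of $\cO_{\mathrm M}(\R^d)$ is specifically tailored so that uniform boundedness of all derivatives together with local uniform convergence implies convergence against all Schwartz weights, which is exactly the compact/tail decomposition outlined above.
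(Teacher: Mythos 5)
Your proposal is correct and follows essentially the same route as the paper: membership in $\cO_{\mathrm M}(\R^d)$ from the uniform bounds on all derivatives, and convergence via the same compact/tail splitting that plays the rapid decay of $g$ off against the uniform derivative bounds outside a compact set and the locally uniform convergence inside it. The only cosmetic difference is that the paper tunes the two thresholds to land exactly at $\varepsilon/2+\varepsilon/2$, whereas you take a $\limsup$ and let $\varepsilon\to 0$; both are fine.
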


\begin{proof}
  Note that $\partial^\alpha f$ is bounded for all $\alpha\in\N_0^d$. 
  Since smooth functions whose derivatives of all orders are bounded clearly belong to $\cO_\mathrm{M}(\R^d)$, we obtain that $(f_n)_n$ is in $\cO_\mathrm{M}(\R^d)$ and $f\in \cO_\mathrm{M}(\R^d)$.
  
  Let $\alpha\in \N_0^d$ and $g \in \cS(\R^d)$, and let $\varepsilon > 0$. Choose a compact subset $K \subseteq \R^d$ such that
  \begin{align*}
    \sup_{x \not\in K} \abs{g(x)} \leq \frac{\varepsilon}{ 2 \sup_{n \in \N} \|\partial^\alpha (f_n - f)\|_\infty +1} .
  \end{align*}
    Furthermore, choose $N \in \N$ such that, for all $n \geq N$, we have
  \begin{align*}
    \sup_{x \in K} \big|\partial^\alpha (f_n - f)(x)\big|
    \leq 
    \frac{\varepsilon}{2 \, \| g \|_\infty+1} .
  \end{align*}
  Then we observe for all $n \geq N$
  \begin{align*}
    \norm{f_n-f}_{g,\alpha} & = \sup_{x\in\R^d} \big|g(x) \, \partial^\alpha (f_n - f)(x) \big|\\
    & \leq
      \sup_{x \in K} \big|g(x)\, \partial^\alpha(f_n - f)(x)\big|
      + \sup_{x \not\in K} \big|g(x) \, \partial^\alpha(f_n - f)(x)\big|\\
    & \leq \frac{\varepsilon}{2} + \frac{\varepsilon}{2} = \varepsilon  .
  \end{align*}
  Thus, $f_n \to f$ in $\cO_\mathrm{M}(\R^d)$.
\end{proof}

\begin{cor}\label{cor:stronglyContinuousFam}
 $(U(t,s))_{0\leq s\leq t \leq T}$ is strongly continuous on $\cS(\R^d)$.
\end{cor}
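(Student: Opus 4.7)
The plan is to deduce strong continuity of $(U(t,s))$ on $\cS(\R^d)$ from convergence of the Fourier multipliers
\[
 m_{t,s}(\xi) \coloneqq \ee^{-\int_s^t \fa(\tau,\xi)\,\d\tau}
\]
in the multiplier space $\cO_{\mathrm M}(\R^d)$. Since $U(t,s)f = \cF^{-1}(m_{t,s}\,\cF f)$ for $f\in\cS(\R^d)$ by \eqref{eq:defnEvoFam}, and since $\cF^{-1}\colon\cS(\R^d)\to\cS(\R^d)$ is continuous while the multiplication $\cO_{\mathrm M}(\R^d)\times\cS(\R^d)\to\cS(\R^d)$ is separately continuous (as noted immediately before Proposition~\ref{prop:omconvergence}), it will suffice to prove that $m_{t_n,s_n}\to m_{t_0,s_0}$ in $\cO_{\mathrm M}(\R^d)$ whenever $(t_n,s_n)\to(t_0,s_0)$ in the triangle $\{0\leq s\leq t\leq T\}$.

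To achieve this I would invoke Proposition~\ref{prop:omconvergence}, which requires, for every multi-index $\alpha\in\N_0^d$, uniform boundedness of $\{\partial^\alpha m_{t,s}\}_{(t,s)}$ and convergence $\partial^\alpha m_{t_n,s_n}\to\partial^\alpha m_{t_0,s_0}$ uniformly on compact subsets of $\R^d$. A Faà di Bruno expansion gives
\[
 \partial^\alpha m_{t,s}(\xi) = R_{\alpha,t,s}(\xi)\,\ee^{-\int_s^t \fa(\tau,\xi)\,\d\tau},
\]
where $R_{\alpha,t,s}$ is a finite sum of products of $j$ factors of the form $\partial^{\gamma_i}\int_s^t\fa(\tau,\cdot)\,\d\tau$ with $\gamma_1+\cdots+\gamma_j=\alpha$ and $|\gamma_i|\geq 1$; the coefficients of these polynomials are bounded by a constant multiple of $(t-s)^j$ (since $a_\alpha\in\Ell^\infty(0,T)$) and depend continuously on $(t,s)$. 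The locally uniform convergence then follows from joint continuity of $(t,s,\xi)\mapsto\partial^\alpha m_{t,s}(\xi)$ on the compact set $\{0\leq s\leq t\leq T\}\times K$ for any compact $K\subseteq\R^d$.

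The principal obstacle is establishing the uniform bound when $(t_0,s_0)$ lies on the diagonal $t_0=s_0$, since in that regime the exponential factor $|\ee^{-\int_s^t \fa(\tau,\xi)\,\d\tau}|\leq \ee^{\omega(t-s)}\ee^{-c_0(t-s)|\xi|^m}$ from Lemma~\ref{lem:real_part_bound} degenerates as $t-s\to 0^+$. The key observation is that $\fa(\tau,\cdot)$ has degree exactly $m$, so nonzero terms in the Faà di Bruno expansion require $|\gamma_i|\leq m$, forcing $j\geq|\alpha|/m$; each such term contributes a polynomial in $\xi$ of degree at most $jm-|\alpha|$. Performing the substitution $u=(t-s)|\xi|^m$ rewrites $(t-s)^j|\xi|^{jm-|\alpha|}$ as $(t-s)^{|\alpha|/m}u^{j-|\alpha|/m}$, and the constraint $j\geq|\alpha|/m$ ensures $u^{j-|\alpha|/m}\ee^{-c_0 u}$ is bounded on $[0,\infty)$; hence
\[
 \sup_{0\leq s\leq t\leq T}\|\partial^\alpha m_{t,s}\|_\infty \leq \tilde C_\alpha\, T^{|\alpha|/m}\,\ee^{\omega T} < \infty,
\]
completing the verification of the hypotheses of Proposition~\ref{prop:omconvergence}, from which the corollary then follows.
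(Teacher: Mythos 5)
Your proof follows the same route as the paper: both reduce the statement to convergence of the multipliers $\ee^{-\int_s^t\fa(\tau,\cdot)\,\d\tau}$ in $\cO_{\mathrm M}(\R^d)$ via Proposition~\ref{prop:omconvergence}, and then conclude using the separate continuity of the multiplication $\cO_{\mathrm M}(\R^d)\times\cS(\R^d)\to\cS(\R^d)$ and the continuity of $\cF^{-1}$ on $\cS(\R^d)$. Your Fa\`a di Bruno computation with the substitution $u=(t-s)|\xi|^m$ is a correct and welcome elaboration of the uniform-boundedness step that the paper only asserts with a reference to Lemma~\ref{lem:real_part_bound}.
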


\begin{proof}
  Let $0\leq s\leq t\leq T$. 
  Let $((t_n,s_n))_{n \in \N}$ be in $[0,\infty)^2$, with $0\leq s_n\leq t_n\leq T$ for all $n\in\N$ and limit $(t_n,s_n)\to (t,s)$. 
  For $n\in\N$, set 
  \begin{align*}
    f_n \coloneqq \ee^{-\int\limits_{s_n}^{t_n} \fa(\tau,\cdot)\d \tau} \in \CC^\infty(\R^d) .
  \end{align*}
    Let furthermore $f\coloneqq \lim_{n \to \infty} f_n$ denote the pointwise limit. By the uniform strong ellipticity of $\fa$, it follows that the convergence of $(f_n)_n$ and its partial derivatives is also uniform on compact subsets of $\R^d$ (with the limit being the corresponding partial derivative of $f$) and that, for all $\alpha \in \N_0^d$, the sequence $(\partial^\alpha f_n)_{n}$ is bounded, see also Lemma \ref{lem:real_part_bound}. By Proposition~\ref{prop:omconvergence}, we have that $f_n\to f$ in $\cO_\mathrm{M}(\R^d)$.

  Let $g \in \cS(\R^d)$. Then $\cF(g)\in \cS(\R^d)$ and therefore $f_n\cF(g)\to f\cF(g)$ in $\cS(\R^d)$. Thus,
  \begin{align*}
    U(t_n,s_n) g = \cF^{-1} (f_n  \cF (g)) \to \cF^{-1} (f \cF (g)) =  U(t,s)g
  \end{align*}
  by continuity of $\cF^{-1}$.
\end{proof}

\begin{cor}\label{cor:strong_continuity}
    Let $p\in[1,\infty)$. Then $(U_p(t,s))_{0\leq s\leq t \leq T}$ is strongly continuous. Moreover, $(U_\infty(t,s))_{0\leq s\leq t \leq T}$ is strongly continuous with respect to the weak$^*$-topology.
\end{cor}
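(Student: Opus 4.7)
The plan is a standard density/uniform-bound argument combined with Corollary~\ref{cor:stronglyContinuousFam}, plus a duality argument for the $p=\infty$ case.

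\textbf{The case $p\in [1,\infty)$.} Let $(s_n,t_n)_{n\in\N}$ in $\{(s,t)\colon 0\leq s\leq t\leq T\}$ converge to $(s,t)$. By Lemma~\ref{lem:kernel_bound}, the operators $U_p(t_n,s_n)$ and $U_p(t,s)$ are uniformly bounded on $\Ell^p(\R^d)$ by some constant $M\geq 0$. Fix $f\in \Ell^p(\R^d)$ and $\varepsilon>0$. Since $\cS(\R^d)$ is dense in $\Ell^p(\R^d)$, choose $g\in\cS(\R^d)$ with $\norm{f-g}_{\Ell^p(\R^d)}\leq \varepsilon$. The triangle inequality gives
\begin{align*}
  \norm{U_p(t_n,s_n)f - U_p(t,s)f}_{\Ell^p(\R^d)}
  &\leq 2M\varepsilon + \norm{U_p(t_n,s_n)g - U_p(t,s)g}_{\Ell^p(\R^d)} .
\end{align*}
By Corollary~\ref{cor:stronglyContinuousFam}, $U(t_n,s_n)g\to U(t,s)g$ in $\cS(\R^d)$. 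Since convergence in $\cS(\R^d)$ implies convergence in $\Ell^p(\R^d)$ (by estimating the $\Ell^p$-norm against a Schwartz seminorm $\sup_{x}(1+|x|^2)^N|\cdot|$ for $N$ large enough), the second summand tends to zero. As $\varepsilon>0$ was arbitrary, the claim follows.

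\textbf{The case $p=\infty$.} We need continuity of $(t,s)\mapsto \int_{\R^d} (U_\infty(t,s)f)\,g\,\d x$ for every $f\in \Ell^\infty(\R^d)$ and $g\in \Ell^1(\R^d)$. Since $U_\infty(t,s)f = p_{t,s}\ast f$, Fubini's theorem yields
\begin{align*}
  \int_{\R^d} (p_{t,s}\ast f)(x)\,g(x)\d x
  = \int_{\R^d} f(z)\,(\tilde p_{t,s}\ast g)(z)\d z,
\end{align*}
where $\tilde p_{t,s}(y)\coloneqq p_{t,s}(-y)$. A direct Fourier computation shows $\tilde p_{t,s} = \cF^{-1}\exp(-\int_s^t \widetilde{\fa}(\tau,\cdot)\d \tau)$ with $\widetilde\fa(\tau,\xi)=\fa(\tau,-\xi)$, which is again a uniformly strongly elliptic polynomial of the same degree $m$ with coefficients in $\Ell^\infty(0,T)$. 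Consequently, $(\tilde p_{t,s}\ast\cdot)_{0\leq s\leq t\leq T}$ is precisely the evolution family on $\Ell^1(\R^d)$ associated with $\widetilde\fa$, to which the case $p=1$ of the first part applies. Hence $\tilde p_{t_n,s_n}\ast g\to \tilde p_{t,s}\ast g$ in $\Ell^1(\R^d)$. By the duality identity above, $\int_{\R^d}(U_\infty(t_n,s_n)f)g\d x\to \int_{\R^d}(U_\infty(t,s)f)g\d x$, which is the desired weak$^*$-strong continuity.

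\textbf{Main obstacle.} The only non-routine point is the $p=\infty$ case, where strong continuity genuinely fails (the convolution kernels $p_{t,s}$ do not provide an $\Ell^\infty$-approximate identity as $t\downarrow s$), so one must pass to the weak$^*$-topology and reduce to the $\Ell^1$-statement applied to the transposed symbol $\widetilde\fa$. Verifying that $\widetilde\fa$ satisfies the same structural hypotheses (uniform strong ellipticity with coefficients in $\Ell^\infty(0,T)$) is immediate from $\fa_m(\tau,-\xi)=(-1)^m \fa_m(\tau,\xi)=\fa_m(\tau,\xi)$ for the even degree $m$, so this reduction is legitimate.
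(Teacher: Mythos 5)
Your proposal is correct and follows essentially the same route as the paper: density of $\cS(\R^d)$ in $\Ell^p(\R^d)$ plus the uniform bound from Lemma~\ref{lem:kernel_bound} and Corollary~\ref{cor:stronglyContinuousFam} for $p\in[1,\infty)$, and for $p=\infty$ the duality $U_\infty(t,s)=V_1(t,s)'$ with the $\Ell^1$-evolution family for the reflected symbol $\fa(\cdot,-\cdot)$, whose uniform strong ellipticity follows from $m$ being even. Your version merely makes the adjoint identity explicit via Fubini; no gaps.
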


\begin{proof}
Note that Lemma \ref{lem:kernel_bound} yields uniform boundedness of $(U_p(t,s))_{0\leq s\leq t\leq T}$ for all $p\in[1,\infty]$.

Since $\cS(\R^d)\hookrightarrow\Ell^p(\R^d)$ is dense for $p\in [1,\infty)$, Corollary~\ref{cor:stronglyContinuousFam} yields that the family $(U_p(t,s))_{0\leq s\leq t\leq T}$ is strongly continuous for $p\in[1,\infty)$.
For $0\leq s\leq t\leq T$, we have $U_\infty(t,s) = V_1(t,s)'$, where $(V_1(t,s))_{0\leq s\leq t\leq T}$ is the evolution family on $\Ell^1(\R^d)$ associated with the non-autonomous polynomial 
  \begin{equation*}
    \fa(\cdot,-\cdot) \colon (t,\xi)\mapsto \fa(t,-\xi) = \sum_{\abs{\alpha}\leq m} (-1)^{\abs{\alpha}}a_\alpha (t) \ii^\alpha \xi^\alpha
  \end{equation*}
  which is also uniformly strongly elliptic since $m$ is necessarily even. Thus, the second assertion follows.
\end{proof}

\subsection{\texorpdfstring{$\boldsymbol{(U_p(t,s))_{0\leq s\leq t\leq T}}$}{(U\_p(t,s))} as an Evolution Family for \texorpdfstring{$\boldsymbol{(A_p(t))_{t\in[0,T]}}$}{(A\_p(t))}}

Now, we establish the relation between the evolution family $(U_p(t,s))_{0\leq s\leq t \leq T}$ and the family of differential operators $(A_p(t))_{t \in [0,T]}$.

\begin{prop}
\label{prop:evoFamBounded}
        Let $p\in (1,\infty)$ and $u\in D^p$.
        \begin{enumerate}[(a)]
            \item\label{it:leftAp} Let $0 \leq s < T$. Then $U_p(\cdot,s)u\in \WW^{1,1}(s,T; \Ell^p(\R^d))\cap \Ell^1(s,T; D^p)$ and, for almost all $t \in (s,T)$, we have $\partial_t(U_p(t,s)u) = - A_p(t) U_p(t,s) u$.
            \item\label{it:rightAp} Let $0 < t \leq T$. Then $U_p(t,\cdot)u\in \WW^{1,1}(t,T; \Ell^p(\R^d))\cap \Ell^1(t,T; D^p)$ and, for almost all $s \in (t,T)$, we have $\partial_s(U_p(T,s)u) = U_p(T,s) A_p(s) u$.
	\end{enumerate}
\end{prop}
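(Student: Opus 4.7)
The plan is to prove both parts via Fourier analysis on Schwartz functions and then extend by density, exploiting the crucial fact that all operators involved ($U(t,s)$, $A(\tau)$, and the partial derivatives $\partial^\alpha$) are Fourier multipliers whose symbols commute pointwise. In particular, for any $u \in \cS'(\R^d)$, one has $A(\tau)U(t,s)u = U(t,s)A(\tau)u$ and $\partial^\alpha U(t,s)u = U(t,s)\partial^\alpha u$, since on the Fourier side these are just products of scalar-valued multipliers. This immediately yields that $U_p(t,s)$ maps $D^p = \WW^{p,m}(\R^d)$ into itself with $\|U_p(t,s)u\|_{\WW^{p,m}} \leq M \ee^{\omega(t-s)} \|u\|_{\WW^{p,m}}$ (using Theorem~\ref{thm:evoFamForElliptic}\eqref{it:expEvoFam}), so that $t \mapsto U_p(t,s)u$ already lies in $\Ell^\infty(s,T; D^p) \subseteq \Ell^1(s,T; D^p)$, and likewise $t \mapsto A_p(t)U_p(t,s)u = U_p(t,s)A_p(t)u$ lies in $\Ell^\infty(s,T; \Ell^p(\R^d))$ thanks to $a_\alpha \in \Ell^\infty(0,T)$.

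For the derivative identity in part~(a), I would first treat $u \in \cS(\R^d)$ by working pointwise on the Fourier side: for each fixed $\xi \in \R^d$, the function $h_\xi(t) \coloneqq \ee^{-\int_s^t \fa(\tau,\xi)\,\drm\tau}$ is absolutely continuous on $[s,T]$, and the fundamental theorem of calculus for scalar-valued absolutely continuous functions gives
\begin{align*}
  h_\xi(t) - 1 = -\int_s^t \fa(\sigma,\xi)\,h_\xi(\sigma)\,\drm\sigma \quad \text{for all } t \in [s,T].
\end{align*}
Multiplying by $(\cF u)(\xi)$, which decays faster than any polynomial, and applying Fubini--Tonelli (justified by the local uniform bound on $\fa(\sigma,\xi)$ in $\sigma$ and the Schwartz regularity of $\cF u$) yields, after applying $\cF^{-1}$, the identity
\begin{align*}
  U(t,s)u - u = -\int_s^t A(\sigma)U(\sigma,s)u\,\drm\sigma
\end{align*}
with both sides in $\cS(\R^d)\subseteq \Ell^p(\R^d)$. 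For general $u \in D^p$, I would approximate by $(u_n)$ in $\cS(\R^d)$ converging in the graph norm of $A_p(t)$ (density is stated before the proposition; note that this graph norm is equivalent to the $\WW^{p,m}$-norm uniformly in $t$ since $\|a_\alpha\|_\infty < \infty$) and pass to the limit using boundedness of $U_p(\sigma,s)$ on $\Ell^p$ together with the uniform estimate $\|A_p(\sigma)v\|_{\Ell^p(\R^d)} \leq C \|v\|_{\WW^{p,m}}$. The resulting Bochner integral identity together with the $\Ell^\infty$-bound on $A_p(\cdot)U_p(\cdot,s)u$ yields $U_p(\cdot,s)u \in \WW^{1,1}(s,T; \Ell^p(\R^d))$ with the asserted derivative.

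Part~(b) is the mirror-image computation: for each $\xi$, differentiating $s \mapsto \ee^{-\int_s^T \fa(\tau,\xi)\,\drm\tau}$ produces a factor $+\fa(s,\xi)$, which leads after the same Fubini step to
\begin{align*}
  U(T,s)u - u = -\int_s^T U(T,\sigma)A(\sigma)u\,\drm\sigma
\end{align*}
first for $u \in \cS(\R^d)$ and then for $u \in D^p$ by the same density argument. Commutativity of $U(T,\sigma)$ with $A(\sigma)$ (both Fourier multipliers) turns the Bochner derivative into $U_p(T,s)A_p(s)u$, as required.

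The main obstacle I anticipate is the Fubini and measurability bookkeeping: since the coefficients $a_\alpha$ are merely in $\Ell^\infty(0,T)$, the null set where $\partial_t h_\xi(t) = -\fa(t,\xi)h_\xi(t)$ fails depends on $\xi$, so one cannot simply differentiate $U(\cdot,s)u$ pointwise in $t$ at a common null set. This is precisely the reason for working with the \emph{integrated} form of the ODE rather than its differentiated counterpart — once the integral identity is established in $\Ell^p$ and the integrand is shown to lie in $\Ell^1(s,T; \Ell^p(\R^d))$, the a.e.\ differentiability in $t$ comes for free from the Bochner fundamental theorem of calculus.
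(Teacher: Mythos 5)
Your proof is correct, and while it rests on the same structural ingredients as the paper's argument --- the convolution/Fourier-multiplier structure of $U_p(t,s)$, the resulting commutation relations $A_p(\sigma)U_p(t,s)u = U_p(t,s)A_p(\sigma)u$ and $\partial^\alpha U_p(t,s)u = U_p(t,s)\partial^\alpha u$, the uniform equivalence of the graph norms $\norm{\cdot}_{A_p(t)}$ with the $\WW^{p,m}$-norm, and the density of $\cS(\R^d)$ in $D^p$ --- the technical execution is genuinely different. The paper differentiates the kernel directly: it asserts that $t\mapsto p_{t,s}$ is weakly differentiable with $\partial_t p_{t,s} = -A_p(t)p_{t,s}$ and then pulls this derivative through the convolution via $\partial_t(p_{t,s}\ast u) = (\partial_t p_{t,s})\ast u = p_{t,s}\ast(-A_p(t)u)$, using closedness of $A_p(t)$ to conclude $U_p(t,s)u\in D^p$. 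You instead establish the integrated (Duhamel) identity $U(t,s)u - u = -\int_s^t A(\sigma)U(\sigma,s)u\,\drm\sigma$ scalar-wise on the Fourier side for Schwartz data, extend to $D^p$ by density, and read off the a.e.\ derivative from the vector-valued fundamental theorem of calculus. Your route is slightly longer but treats more transparently the point you flag yourself: with coefficients only in $\Ell^\infty(0,T)$, the exceptional null set for the pointwise ODE $\partial_t h_\xi(t) = -\fa(t,\xi)h_\xi(t)$ depends on $\xi$, and the integrated formulation sidesteps this cleanly, whereas the paper compresses the same issue into the unproved assertion of weak differentiability of $t\mapsto p_{t,s}$. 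The only bookkeeping worth adding to your write-up is the strong measurability of $t\mapsto U_p(t,s)u$ as a $D^p$-valued map (immediate from strong continuity of $U_p(\cdot,s)$ applied to each $\partial^\alpha u$), which is needed before invoking $\Ell^\infty(s,T;D^p)\subseteq\Ell^1(s,T;D^p)$.
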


\begin{proof}
  \eqref{it:leftAp} By Young's inequality and the $\Ell^1$-bound of the kernel in Lemma \ref{lem:kernel_bound}, we observe $U_p(\cdot,s)u\in \Ell^1(s,T;\Ell^p(\R^d))$.	
	Note that $(s,T) \ni t\mapsto p_{t,s}$ is weakly differentiable and $\partial_t p_{t,s} = -A_p(t)p_{t,s}$ for all $t\in (s,T)$.
	For the weak derivative of $U_p(\cdot,s)u$, we have   
	\begin{align*}
		\partial_t(U_p(t,s) u) & = \partial_t(p_{t,s} \ast u) = (\partial_t p_{t,s}) \ast u = (-A_p(t) p_{t,s}) \ast u\\
                & = p_{t,s} \ast(-A_p(t)u) = -A_p(t) (p_{t,s}\ast u) = -A_p(t) U_p(t,s)u
	\end{align*}
	for almost all $t\in (s,T)$.
        In particular, the closedness of $A_p(t)$ implies $U_p(t,s) u \in D^p$ for almost all $t \in (s,T)$.
	
        Note that $D^p = \WW^{p,m}(\R^d)$ and the Sobolev norm and the graph norms $\|\cdot \|_{A_p(t)}$ are equivalent, i.e.\@ there exists $C>0$ such that
	\[\frac{1}{C}\norm{v}_{\WW^{p,m}(\R^d)}\leq \norm{v}_{A_p(t)} \leq C\norm{v}_{\WW^{p,m}(\R^d)}\]
	for all $t\in [s,T]$ and $v\in D^p$ which follows from uniform strong ellipticity of $\fa$ and the boundedness of the coefficients $a_\alpha$.
	In particular, $\norm{A_p(t)u}_{\Ell^p(\R^d)} \leq C \norm{u}_{\WW^{p,m}(\R^d)}$ for all $t\in[s,T]$ and $u \in D^p$, and therefore
	\[\int_s^T \norm{A_p(t) u}_{\Ell^p(\R^d)} \d t < \infty,\]
        so, by Young's inequality and the kernel bound from Lemma \ref{lem:kernel_bound}, we observe that $\partial_t U_p(\cdot,s)u\in \Ell^1(s,T;\Ell^p(\R^d))$ as well as $U_p(\cdot,s)u\in \Ell^1(s,T;D^p)$.
	
        The proof of~\eqref{it:rightAp} follows the same lines as the proof of~\eqref{it:leftAp}.
\end{proof}

\phantomsection
\section*{Acknowledgement}
F.G.\ thanks K.~Kruse for valuable discussions about the multiplier space $\cO_\mathrm{M}(\R^d)$ and D.~Gallaun for a helpful conversation on interpolation theory.
Furthermore, the authors would like to thank the anonymous reviewers for their comments and suggestions.

\end{document}